\newcommandx\work[2][1=]{\todo[linecolor=RoyalBlue,backgroundcolor=RoyalBlue!25,bordercolor=RoyalBlue,#1]{\textsc{todo} #2}}
\newcommandx\comment[2][1=]{\todo[linecolor=OliveGreen,backgroundcolor=OliveGreen!25,bordercolor=OliveGreen,#1]{\textsc{comment} #2}}
\newcommandx\mistake[2][1=]{\todo[linecolor=red,backgroundcolor=red!25,bordercolor=red,#1]{\textsc{mistake} #2}}
\newcommandx\improve[2][1=]{\todo[linecolor=orange,backgroundcolor=orange!25,bordercolor=orange,#1]{\textsc{improve} #2}}
\newcommandx\change[2][1=]{\todo[linecolor=yellow,backgroundcolor=yellow!25,bordercolor=yellow,#1]{\textsc{change} #2}}
\newcommandx\mem[2][1=]{\todo[linecolor=orange,backgroundcolor=orange!25,bordercolor=orange,#1]{\textsc{mem} #2}}
\newcommandx\status[2][1=]{\todo[linecolor=Blue,backgroundcolor=Blue!25,bordercolor=Blue,#1]{\textsc{Status} #2}}
\newcounter{n}
\newcounter{bigthings}
\numberwithin{n}{section}
\theoremstyle{plain}
\newtheorem{biglemma}[bigthings]{Lemma}
\newtheorem{bigtheorem}[bigthings]{Theorem}
\newtheorem{bigobjective}[bigthings]{Objective}
\newtheorem{bigconjecture}[bigthings]{Conjecture}
\newtheorem{lemma}[n]{Lemma}
\newtheorem*{lemma*}{Lemma}
\newtheorem{proposition}[n]{Proposition}
\newtheorem{theorem}[n]{Theorem}
\newtheorem*{claim*}{Claim}
\newtheorem*{assertion*}{Assertion}
\newtheorem*{proposition*}{Proposition}
\theoremstyle{definition}
\newtheorem{definition}[n]{Definition}
\newtheorem*{definition*}{Definition}
\newtheorem{remark}[n]{Remark}
\newtheorem*{example*}{Example}
\definecolor{myLightGreen}{RGB}{142,230,182}
\definecolor{myDarkGreen}{RGB}{34,151,87}
\tikzstyle{model} = [rectangle, rounded corners,text centered, text width=3.2cm, draw=black, fill=Cyan!20]
\tikzstyle{widemodel} = [rectangle, rounded corners,text centered, text width=5cm, draw=black, fill=Cyan!20]
\tikzstyle{phaselight} = [rectangle, rounded corners,text centered,text width=3cm, draw=black,fill=myLightGreen]
\tikzstyle{phasedark} = [rectangle, rounded corners,text centered,text width=3cm, draw=black,fill=myDarkGreen]
\renewcommand\phi\varphi
\renewcommand\epsilon\varepsilon
\renewcommand\smallsetminus\setminus
\DeclareMathSymbol{\shortminus}{\mathbin}{AMSa}{"39}
\newcommand\cts{{\operatorname{cts}}}
\newcommand\dscrt{{\operatorname{discrete}}}
\newcommand\diffi{{\,\mathrm{d}}}
\newcommand\diff{{\mathrm{d}}}
\newcommand\Support{\operatorname{Support}}
\newcommand\calE{\mathcal{E}}
\newcommand\D{\mathbb D}
\newcommand\E{\mathbb E}
\newcommand\R{\mathbb R}
\newcommand\C{\mathbf{C}}
\renewcommand\P{\mathbb P}
\newcommand\Z{\mathbb Z}
\newcommand\G{\mathbb G}
\newcommand\V{\mathbb V}
\newcommand\calI{\mathcal I}
\newcommand\calA{\mathcal A}
\newcommand\calU{\mathcal U}
\newcommand\calL{\mathcal L}
\newcommand\calC{\mathcal C}
\newcommand\calT{\mathcal T}
\newcommand\calV{\mathcal V}
\newcommand\calN{\mathcal N}
\newcommand\calX{\mathcal X}
\newcommand\calY{\mathcal Y}
\newcommand\calR{\mathcal R}
\newcommand\calS{\mathcal S}
\newcommand\calZ{\mathcal Z}
\newcommand\calQ{\mathcal{Q}}
\newcommand\calW{\mathcal{W}}
\newcommand\connectin[4]{#1\xleftrightarrow{~\text{ $#2\cap#3$ }~}#4}
\newcommand\connect[3]{#1\xleftrightarrow{~\text{ $#2$ }~}#3}
\newcommand\Bound{{\mathbf{Bound}}}
\newcommand\BoundNonneg{{\mathbf{Bound}}_{\geq 0}}
\newcommand\TBound{{\mathbf{TrBound}}}
\newcommand\TBoundNonneg{{\mathbf{TrBound}}_{\geq 0}}
\newcommand\psiRSW[1]{\psi^{\mathrm{RSW}}}
\newcommand\cpush{c_\mathrm{push}}
\newcommand\crenorm{c_\mathrm{renorm}}
\newcommand\cdicho{c_\mathrm{dichot}}
\newcommand\PhiLoc{\operatorname{Loc}[\Phi]}
\newcommand\PhiDeloc{\operatorname{Deloc}[\Phi]}
\newcommand\PhiDecay{\operatorname{Decay}[\Phi]}
\newcommand\PhiPositive{\operatorname{Positive}[\Phi]}
\newcommand\PsiLoc{\operatorname{Loc}[\Psi]}
\newcommand\PsiDeloc{\operatorname{Deloc}[\Psi]}
\newcommand\PsiDecay{\operatorname{Decay}[\Psi]}
\newcommand\PsiPositive{\operatorname{Positive}[\Psi]}
\newcommand\betaeff{ T_\mathrm{eff}}
\newcommand\ceff{T_\mathrm{gap}}
\newcommand\CIRCUIT{\mathcal{C}}
\newcommand\blank{\,\cdot\,}
\newcommand\Var{\operatorname{Var}}
\newcommand\Cov{\operatorname{Cov}}
\newcommand\Sign[1]{\operatorname{Sign}(#1)}
\newcommand\ssubset{\Subset}
\definecolor{colorlinks}{RGB}{0, 24, 168}
\definecolor{colorcites}{RGB}{124, 10, 2}
\newcommand\m{\mathfrak{m}}
\begin{document}

\makeatletter
\@namedef{subjclassname@2020}{\textup{2020} Mathematics Subject Classification}
\makeatother

\title[A dichotomy theory for height functions]{A dichotomy theory for the height functions\\of the BKT transition}
\subjclass[2020]{Primary 82B20, 82B41; secondary 82B30}
 \author{Piet Lammers}
% \date{24 November 2022}
\keywords{%
    Height functions,
    renormalisation group,
    coarse-graining,
    phase diagram,
    sharpness,
    effective temperature gap,
    finite-size criterion,
    BKT transition,
    discrete Gaussian model,
    solid-on-solid model,
    lattice models,
    statistical mechanics}
\address{Institut des Hautes \'Etudes Scientifiques (2020--2023)}
\email{lammers@ihes.fr}

\address{CNRS, Sorbonne Université, LPSM (2023--present)}
\email{piet.lammers@cnrs.fr}

\vspace{-1em}

\begin{abstract}
This text considers the discrete height functions associated with the \emph{Berezinskii--Kosterlitz--Thouless transition} (BKT) at slope zero.
Our main results are as follows.
\begin{itemize}
    \item \textbf{Sharpness:} If the model is localised, then the two-point function (covariance) decays exponentially fast in the distance between the points.
    \item \textbf{Effective temperature gap:} If the model is delocalised, then the variance grows at least as $c\log n$, where $n$ is the distance to the boundary and $c>0$ a universal constant not depending on the temperature.  Thus, the effective temperature must jump from $0$ to at least $c$ at the transition point; values in the interval $(0,c)$ are forbidden.
    \item \textbf{Delocalisation at the transition point:} The delocalised phase includes the transition point, in the sense that it is a closed set in the phase diagram in the appropriate topology.
\end{itemize}
These results contribute to the understanding of the regime at and around the transition point which remained largely unexplored.
In a follow-up paper, the sharpness derived here is used to establish that the localisation-delocalisation transition
is equivalent to the BKT transition in the dual XY and Villain models.

\end{abstract}

\maketitle

\vspace{-2em}

% \vspace{\fill}
% \vspace{-2cm}
\setcounter{tocdepth}{1}
\tableofcontents
% \newpage

% \vspace{\fill}

% \input{figures.tex}
% \listoffigures

% \newpage
% \input{meta/todos.tex}
% \newpage
% \listoftodos
% \newpage
% \part*{Introduction}
\section{Introduction}

\label{section:intro}
\subsection{Preface}

\begin{figure}[b]
    \includegraphics{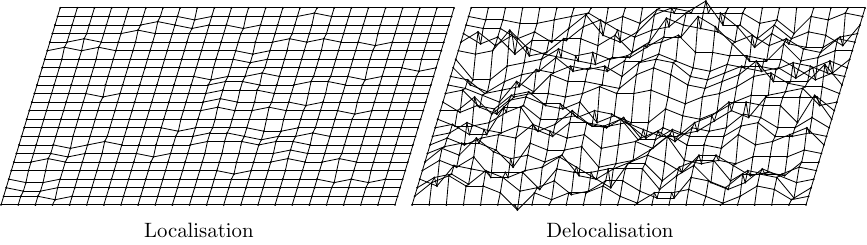}
    \caption{
        \textsc{Left}:
        A sample at low temperature from the discrete Gaussian model.
        The sample looks flat with a few local excitations.
        \textsc{Right}: A sample at high temperature from the same model.
        The surface looks rougher and the heights tend to move away from zero.
    }
    \label{fig:sim}
\end{figure}

Berezinskii~\cite{Berezinskii_1971_DestructionLongrangeOrder,Berezinskii_1972_DestructionLongrangeOrder}, and independently Kosterlitz and Thouless~\cite{KosterlitzThouless_1973_OrderingMetastabilityPhase},
predicted a new of phase transition
in certain two-dimensional spin systems with continuous spin symmetry in the early 1970s (more
precisely: the \emph{XY model} and the \emph{Villain model}).
Such systems were known not to undergo a magnetisation transition~\cite{MerminWagner_1966_AbsenceFerromagnetismAntiferromagnetism},
and the phase transition was predicted to occur on the level of \emph{topological excitations} instead.
The phase transition may be detected by considering the behaviour of the two-point correlation function:
it should decay \emph{exponentially fast} at high temperature, and \emph{polynomially fast} at low temperature.
The
importance of the \emph{Berezinskii--Kosterlitz--Thouless transition} (BKT) was recognised with the 2016 Nobel prize in physics.
The BKT transition is of great interest to researchers in mathematics today,
as is illustrated by a number of beautiful recent works~\cite{GarbanSepulveda_2023_QuantitativeBoundsVortex,GarbanSepulveda_2023_StatisticalReconstructionGFF,AizenmanHarelPeled_2022_DepinningIntegerrestrictedGaussian,EngelenburgLis_2023_ElementaryProofPhase,BauerschmidtParkRodriguez_2024_DiscreteGaussianModel,BauerschmidtParkRodriguez_2024_DiscreteGaussianModela}
discussed below in further detail.

Fröhlich and Spencer marked a major breakthrough in mathematical physics
by rigorously deriving the existence of the BKT transition in their 1981 article~\cite{FrohlichSpencer_1981_KosterlitzThoulessTransitionTwodimensional}.
In that work, the authors analyse the spin system in tandem with its dual object, the \emph{height function}.
This height function, which may be viewed as a Fourier transform of the original spin system,
appears naturally when expanding the partition function of the original model.
For example, the two models share the same partition function,
and the height function is believed to somehow encode the topological excitations of the spin model.
Fröhlich and Spencer prove the existence of the BKT transition by
establishing polynomial decay of the two-point function at low temperature
(it is easy to see that the decay is exponential at high temperature).
On the height function side, they establish \emph{delocalisation} at high temperature.

For the existence of a phase transition, it suffices to detect a qualitative difference
between the perturbative (very high and very low temperature) regimes.
The current paper is the first in a series of articles which aim to make some progress
on the natural follow-up question:
\begin{quote}
    ``Knowing that a phase transition occurs, what happens at and around the transition point?''
\end{quote}

The current article focusses exclusively on the height functions side.
\emph{Height functions} are random integer-valued functions on the vertices
of a lattice graph.
We restrict our attention to two dimensions, and focus mainly on the two-dimensional square lattice graph (Figure~\ref{fig:sim}).
We are interested in height functions with zero slope (or equivalently, zero boundary conditions)
because those boundary conditions naturally show up in the aforementioned duality with the spin models.
Height functions play a pivotal role in two-dimensional statistical mechanics because they are in direct correspondence with a vast range of
other two-dimensional models of varying nature:
not just the XY and Villain model mentioned above, but also, e.g., the 
six-vertex, percolation, Ising, Potts, loop, and dimer models.
An increasingly precise general theory for the analysis of height functions has
emerged over the past fifty years, and they are increasingly studied in their own right.
The current article investigates the nature of the height functions phase transition for a class of potentials, including
the height functions corresponding to the BKT transition.
A priori, we may subdivide our phase diagram into two: either the variance of the height at the origin
is bounded uniformly in the distance to the boundary of the domain: the
\emph{localised phase}, or this variance grows infinite as the domain is taken larger and
larger: the \emph{delocalised phase} (Figure~\ref{fig:sim}). This phase transition is generically
called the \emph{localisation-delocalisation transition}. 
We know that both phases occur because height functions are localised at low temperatures by
an application of the Peierls argument,
while Fröhlich and Spencer established that height functions are delocalised 
at high temperature in their 1981 work~\cite{FrohlichSpencer_1981_KosterlitzThoulessTransitionTwodimensional} on the BKT transition.
A number of precise questions arise.

\begin{bigobjective}\label{objectives}
{\color{white}~}
\begin{itemize}
    \item[Q1.] Is the localised phase one phase, or does it contain multiple subphases?
    \item[Q2.] Is the delocalised phase one phase, or does it contain multiple subphases?
    \item[Q3.] How does the height model behave at and around the transition point?
    \item[Q4.] Is the BKT transition indeed equivalent to the localisation-delocalisation transition?
\end{itemize}
\end{bigobjective}

This article makes partial progress on Questions~1--3.
Questions~1 and~2 are addressed in Theorems~\ref{thm:sharpness_NEW} and~\ref{thm:gap}.
Question~3 is addressed in Theorem~\ref{thm:deloc_at_crit} (and the previous two theorems).
A subsequent article~\cite{Lammers_2023_BijectingBKTTransition},
which relies on Theorem~\ref{thm:sharpness_NEW},
confirms Question~4.
Another follow-up article~\cite{DurandLammers_2026_BKTTransitionSurface} relates the mass (a statistical mechanical quantity separating the two phases)
to the \emph{surface tension} of the model.

At the heart of the current article is a variant of the \emph{renormalisation inequality}
introduced by Duminil-Copin, Sidoravicius, and Tassion in their work on the two-dimensional random-cluster model~\cite{Duminil-CopinSidoraviciusTassion_2017_ContinuityPhaseTransition}.
Our version of it is stated in Lemma~\ref{lemma:preface_second_coarse_graining}.

\subsection{Definitions and main results}

\subsubsection{Definition of height functions and the phase transition}

\begin{definition}[Square lattice graph]
    Let $(\Z^2,\E)$ denote the square lattice graph.  Write
    $\Lambda\ssubset\Z^2$ to say that $\Lambda$ is a finite subset of
    $\Z^2$, and write $\E(\Lambda)\subset\E$ for the set of edges incident to at
    least one vertex in $\Lambda$. Use the shorthand
    $\Lambda_n^\dscrt:=(-n,n)^2\cap\Z^2$ for any $n\in\Z_{\geq 1}$, and observe that
    $\Lambda_n^\dscrt\uparrow\Z^2$ as $n\to\infty$.
\end{definition}

\begin{definition}[Height functions]
    \label{def:height_functions}
    Let $\Omega^\dscrt:=\{h:\Z^2\to\Z\}$ denote the set of height functions, and write
    $\Omega^\dscrt_\Lambda:=\{h\in\Omega^\dscrt:\Support(h)\subset\Lambda\}$ for the height functions whose support is
    contained in $\Lambda\ssubset\Z^2$.  Define the probability measure
    $\mu^\dscrt_\Lambda$ on $\Omega^\dscrt$ to be the unique measure which has
    $\Omega^\dscrt_\Lambda$ as its support and which assigns a probability
    \[
        \mu^\dscrt_\Lambda[\{h\}]:=\frac1{Z_\Lambda}e^{-H_\Lambda(h)};
        \qquad
        H_\Lambda(h):=\sum_{xy\in\E(\Lambda)}V(h_y-h_x)
    \]
    to each height function $h\in\Omega^\dscrt_\Lambda$, where $V:\Z\to\R$ is
    an unbounded convex symmetric \emph{potential function}, and where
    $H_\Lambda$ and $Z_\Lambda$ denote the \emph{Hamiltonian} and
    \emph{partition function } in $\Lambda$ respectively.
    We think of $\mu^\dscrt_\Lambda$ as the law of the height function $h$ in $\Lambda$
    with zero boundary conditions.
\end{definition}

We are typically interested in the law of $h$ in the family
$(\mu^\dscrt_\Lambda)_{\Lambda\ssubset\Z^2}$ for a fixed potential function $V$.
In this article, we restrict to potential functions which are \emph{super-Gaussian}.

\begin{definition}[Super-Gaussian potentials]
    \label{def:pot}
    A \emph{potential function} is an unbounded convex symmetric function $V:\Z\to\R$.
    We say that a potential function $V$ is
    \emph{super-Gaussian} whenever its second derivative
    $V^{(2)}:\Z\to[0,\infty)$ defined by \[V^{(2)}(a):=V(a-1)-2V(a)+V(a+1)\]
    satisfies $V^{(2)}(a+1)\leq V^{(2)}(a)$ for all $a\geq 0$.  Write $\Phi$ for
    the set of super-Gaussian potential functions.  The class $\Phi$ is endowed
    with a natural topology $\calT$, namely the pull-back along the map
    \[
        V
        \mapsto
        \left(
            e^{-V+V(0)},
            \quad
            \lim_{a\to\infty}\tfrac{V(a)}{a}
        \right),
        \]
    where the first
    component of the codomain is endowed with the $\ell^\infty$ topology and the second
    component with the natural topology on the extended number line $[0,\infty]$
    (which makes the extended line connected).  Observe that this topology does not
    distinguish potentials which differ by a constant, which is natural because
    such potentials induce the same probability measures $(\mu^\dscrt_\Lambda)_\Lambda$.
\end{definition}

\begin{example*}
    The following potential functions belong to $\Phi$.
    \begin{itemize}
        \item The \emph{discrete Gaussian model} $V(a)=\beta a^2$ for fixed $\beta>0$,
        \item The \emph{solid-on-solid model} $V(a)=\beta |a|$ for fixed $\beta>0$,
        \item The potential $V(a):=\beta |a|^\gamma$ for fixed $\gamma\in(1,2]$ and $\beta>0$,
        \item The \emph{Poisson potential} $V(a):=-\log I_a(\beta)$ for sufficiently small $\beta>0$.
    \end{itemize}
    In the last example, $I_a$ is the \emph{modified Bessel function};
    we refer to~\cite{EngelenburgLis_2023_ElementaryProofPhase} for a proof.

    In each of these four cases, the potential is continuous in the parameter $\beta$,
    and the potential $V(a):=\beta |a|^\gamma$ is continuous in both $\beta$ and $\gamma$.
\end{example*}

For super-Gaussian potentials it is known that the map $\Lambda\mapsto\Var_{\mu^\dscrt_\Lambda}[h_x]$ is increasing
in $\Lambda$~\cite{LammersOtt_2024_DelocalisationAbsolutevalueFKGSolidonsolid}
(this will also be discussed later in more detail).
In particular, for any fixed potential $V$, the following limit is well-defined:
\[
    \lim_{\Lambda\uparrow\Z^2}\Var_{\mu^\dscrt_{\Lambda}}[h_x]\in[0,\infty].
\]

\begin{definition}[Localisation-delocalisation transition]
    \label{def:loc_deloc}
    We say that $V\in\Phi$ is:
    \begin{itemize}
        \item \emph{Localising} whenever $\lim_{\Lambda\uparrow\Z^2}\Var_{\mu^\dscrt_\Lambda}[h_x]<\infty$,
        \item \emph{Delocalising} whenever $\lim_{\Lambda\uparrow\Z^2}\Var_{\mu^\dscrt_\Lambda}[h_x]=\infty$.
    \end{itemize}
    Write $\PhiLoc$ and $\PhiDeloc$ for the sets of localising and delocalising
    potentials in $\Phi$.
\end{definition}

Either set is nonempty:
\begin{itemize}
    \item $\PhiLoc$ contains $\{V(1)-V(0)\gg 0\}$ by Peierls' argument~\cite{Peierls_1936_IsingsModelFerromagnetism,BrandenbergerWayne_1982_DecayCorrelationsSurface},
    \item $\PhiDeloc$ contains the
    potentials $\beta a^2$ and $\beta |a|$
    for small $\beta$, and $-\log I_a(\beta)$ for large $\beta$;
    see the work of Fröhlich and Spencer on the BKT transition~\cite{FrohlichSpencer_1981_KosterlitzThoulessTransitionTwodimensional}.
\end{itemize}

The paragraphs below, which describe the main results,
are illustrated by Figure~\ref{fig:phase_diagram}.

\begin{figure}
    \includegraphics{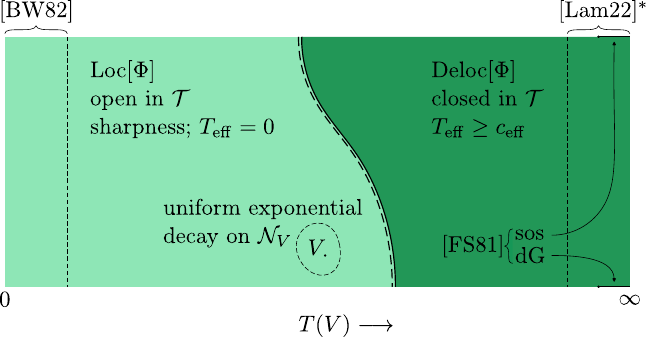}
    \caption{
        Schematic rendering of the main results in the phase diagram,
        realised as the topological space $(\Phi,\calT)$.
        The infinite-dimensional space is projected onto paper in such a way
        that the $x$-coordinate coincides precisely with $T(V):=(V(1)-V(0))^{-1}$.
        Unlike perhaps suggested, it is not proved that $\PhiLoc$ and $\PhiDeloc$
        are connected in this topology.
        Some existing localisation-delocalisation results are drawn:
         localisation
        at low temperature (the Peierls argument~\cite{BrandenbergerWayne_1982_DecayCorrelationsSurface}),
        and delocalisation at high temperature (\cite{FrohlichSpencer_1981_KosterlitzThoulessTransitionTwodimensional}
        for the solid-on-solid and discrete Gaussian models,
        and~\cite{Lammers_2022_HeightFunctionDelocalisation} for all super-Gaussian potentials on the hexagonal and octagonal lattices;
        the result is expected to generalise to the square lattice).
        The current article also applies to the hexagonal and octagonal lattices;
        see Subsection~\ref{subsec:into:sym_gen}.
    }
    \label{fig:phase_diagram}
\end{figure}

\subsubsection{Sharpness}

\emph{Sharpness} means that the localisation-delocalisation transition 
coincides with the onset of exponential decay of the two-point function (covariance)
of the height function.
This suggests that no phase transition occurs \emph{within} the set $\PhiLoc$,
giving a strong indication for the answer to Question~1 in Objective~\ref{objectives}.

\begin{bigtheorem}[Sharpness of the phase transition]
    \label{thm:sharpness_NEW}
    If $V\in\PhiLoc$, then the family $(\mu^\dscrt_\Lambda)_\Lambda$ converges
    to some limit measure $\mu^\dscrt$ on height functions as $\Lambda\uparrow\Z^2$
    in the local convergence topology.
    This limit is ergodic and extremal,
    and $\mu^\dscrt[\{|h_x|\geq \lambda\}]$ decays exponentially fast in $\lambda$.
    The covariance of the height function decays exponentially fast in
    this measure, in the sense that there exists a unique norm $\|\cdot\|_V$ on $\R^2$ such that
    \begin{align}
        \label{eq:norm1}
            &\Cov_{\mu^\dscrt}[h_x;h_y]=e^{-(1+o(1))\|y-x\|_V};
            \\
            \label{eq:norm2}
            &\Cov_{\mu^\dscrt}[\Sign{h_x};\Sign{h_y}]=e^{-(1+o(1))\|y-x\|_V};
    \end{align}
    as $\|y-x\|_2\to\infty$.
\end{bigtheorem}

Such a result was already known for the height function of the six-vertex model with parameters $a=b=1$
and $c\geq 1$ via a connection with the Bethe Ansatz and the random-cluster model (cf.~Subsection~\ref{subsec:intro:developments}).
To the best of the author's knowledge, however, Theorem~\ref{thm:sharpness_NEW} is the first time that
sharpness is proved directly at the level of the height function,
in a setting where explicit calculations are not available.

The norm $\|\cdot\|_V$ is related directly to two classical quantities,
namely the \emph{mass}
and its reciprocal, the \emph{correlation length},
defined respectively by
\begin{equation}
    \label{eq:mass_correlation_length}
    \m(V):=\|(1,0)\|_V;
    \qquad
    \xi(V):=1/\m(V).
\end{equation}
These quantities naturally extend to all of $\Phi$ by setting
\[
    \|\cdot\|_V:\equiv 0;
    \qquad
    \m(V):= 0
    ;
    \qquad
    \xi(V):=\infty,
    \qquad
    \forall V\in\PhiDeloc
\]
(cf.~Remark~\ref{rem:deloc_extension}).
It also means that \emph{sharpness} (Theorem~\ref{thm:sharpness_NEW}) may be phrased differently:
we may simply say that \emph{delocalisation} is equivalent to the \emph{vanishing of the mass}.

\subsubsection{Effective temperature gap}

Let us first state an important conjecture.

\begin{bigconjecture}[GFF convergence]
    Let $\Gamma$ denote the normalised Gaussian free field (GFF), which is a Gaussian process.
    For any $V\in\PhiDeloc$,
    there exists an \emph{effective temperature} $\betaeff(V)\in(0,\infty)$
    such that the gradient of the height function
    (in $\mu_\Lambda^\dscrt$ in the $\Lambda\uparrow\Z^2$ limit)
    has $\betaeff(V)\cdot\Gamma$
    as its scaling limit.

    Moreover, $\betaeff:\PhiDeloc\to(0,\infty)$ is an analytic function
    with respect to a natural analytic structure on $\PhiDeloc$.
\end{bigconjecture}

This conjecture suggests that no phase transition occurs \emph{within} the set $\PhiDeloc$
(Question~2 in Objective~\ref{objectives}).

Proving the above conjecture for \emph{all} delocalised potentials would constitute a major breakthrough
and seems out of reach with current methods
(GFF convergence was recently proved for the square potential at high temperature~\cite{BauerschmidtParkRodriguez_2024_DiscreteGaussianModel,BauerschmidtParkRodriguez_2024_DiscreteGaussianModela}).
However, we prove some interesting results which are consistent with it.
Moreover, we prove an interesting \emph{conditional} result:
if the above conjecture is true, then the effective temperature $\betaeff(V)$
is bounded away uniformly from zero.
This result is called the \emph{effective temperature gap}.

To state the result in its simplest form,
let $H_n$ denote the average of $h$ on $\Lambda_n^\dscrt\setminus\Lambda_{n-1}^\dscrt$;
we interpret this random variable as the discrete equivalent of the 
circle average in the analysis of the Gaussian free field,
and we also observe that $H_1=h_{(0,0)}$.

\begin{bigtheorem}[Effective temperature gap]
    \label{thm:gap}
    There exists a universal constant $\ceff>0$ with the following property.
    If $V\in\PhiDeloc$ is any delocalised potential, then
    \[
        \Var_{\mu^\dscrt_{\Lambda_n^\dscrt}}[H_m]\geq \ceff\cdot \log\tfrac{n}{m}\qquad\forall n\in\mathbb Z_{\geq 8000},\,\forall 1\leq m\leq n/8.
    \]
    In particular, if the model has the Gaussian free field
    $\betaeff(V)\cdot\Gamma$ with effective temperature $\betaeff(V)\in[0,\infty)$
    as its scaling limit,
    then $\betaeff(V)\geq \ceff$.
    Since $\betaeff(V)=0$ for $V\in\PhiLoc$,
    this means that there is a range $(0,\ceff)$ of forbidden values
    for the effective temperature, which we call the
    \emph{effective temperature gap}.
\end{bigtheorem}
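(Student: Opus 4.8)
The plan is to convert the dichotomy of Lemma~\ref{lemma:preface_second_coarse_graining} into the stated logarithmic lower bound via a multiscale martingale decomposition. First observe that for $V\in\PhiDeloc$ the observable cannot decay: if $p_N(V)<\cdicho$ for some $N\in\Z_{\ge1000}$, then \eqref{eq:original_second_coarse_graining} gives $p_{20kN}(V)\le(p_N(V)/\cdicho)^k\to0$, so $(p_n(V))_n$ decays exponentially along a subsequence; since exponential decay of the observable forces $V\in\PhiLoc$ (this implication is part of the analysis underlying Theorem~\ref{thm:sharpness_NEW}), this contradicts $V\in\PhiDeloc$. Hence $p_n(V)\ge\cdicho$ for every $n\ge1000$.

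Now fix a universal ratio $\rho\ge2$ adapted to the aspect ratios appearing in the definition of the observable, and choose scales $m-1=r_0<r_1<\dots<r_L=n$ with $r_i/r_{i-1}\in[\rho,2\rho]$; the assumptions $n\ge8000$ and $m\le n/8$ guarantee $L\ge1$ and leave room for the observable's fixed geometry, and $L\asymp\log(n/m)$. Let $\calF_j:=\sigma\bigl(h|_{\Z^2\setminus\Lambda_{r_{L-j}}}\bigr)$; this is an increasing filtration with $\calF_0$ trivial (since $h\equiv0$ off $\Lambda_n$ under $\mu_{\Lambda_n}$) and $H_m$ is $\calF_L$-measurable. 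Writing $M_j:=\E_{\mu_{\Lambda_n}}[H_m\mid\calF_j]$, orthogonality of martingale increments gives
\[
\Var_{\mu_{\Lambda_n}}[H_m]=\sum_{j=1}^L\E\bigl[(M_j-M_{j-1})^2\bigr]=\sum_{j=1}^L\E\bigl[\Var[M_j\mid\calF_{j-1}]\bigr].
\]
It therefore suffices to show $\E\bigl[\Var[M_j\mid\calF_{j-1}]\bigr]\ge c$ for a universal $c>0$ whenever $r_{L-j}\ge1000$ (for the $O(1)$ remaining scales we use only $\Var[M_j\mid\calF_{j-1}]\ge0$); summing then yields the theorem with $\ceff$ of order $c/\log\rho$, and the effective-temperature statement follows at once.

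For the per-scale estimate, fix $j$ with $r_{L-j}\ge1000$ and condition on $\calF_{j-1}$. By the domain Markov property the field inside $\Lambda_{r_{L-j+1}}$ is a height-function measure on that box with the revealed boundary data $\eta$, and revealing the annulus $\Lambda_{r_{L-j+1}}\setminus\Lambda_{r_{L-j}}$ determines $M_j=g\bigl(h|_{\partial\Lambda_{r_{L-j}}}\bigr)$, where $g(\zeta):=\E^{\zeta}_{\Lambda_{r_{L-j}}}[H_m]$ is monotone in $\zeta$ and satisfies $g(\zeta+\ell)=g(\zeta)+\ell$ for $\ell\in\Z$. The goal is to exhibit, conditionally on $\calF_{j-1}$, two events of conditional probability $\ge c_0$ on which the conditional means of $M_j$ differ by at least $1$; a standard two-point estimate then forces $\Var[M_j\mid\calF_{j-1}]\ge c(c_0)>0$. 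Concretely, using $p_{r_{L-j}}(V)\ge\cdicho$, the symmetry $h\mapsto-h$, and the absolute-value-FKG property of Lemma~\ref{lem:pos_abs}, one shows that for a suitable reference level $\ell=\ell(\eta)$ the annulus contains, with conditional probability $\ge c_0$, a circuit around $\Lambda_{r_{L-j}}$ on which $h\le\ell$, and likewise, with conditional probability $\ge c_0$, a circuit around $\Lambda_{r_{L-j}}$ on which $h\ge\ell+1$; conditioning further on the outermost, resp.\ innermost, such circuit and on everything outside it, the domain Markov property together with monotonicity in the boundary condition pins $M_j\le\ell$, resp.\ $M_j\ge\ell+1$, on the two events.

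The main obstacle is exactly this per-scale estimate for \emph{arbitrary} boundary data $\eta$: the observable $p_r(V)$ is defined with zero boundary values, whereas here the inner box carries a general and possibly steep profile $\eta$, which could a priori suppress a circuit at every fixed height. Overcoming this rests on (i) exploiting that the model is a \emph{gradient} model, so that the circuit statement only makes sense relative to a local ambient level $\ell(\eta)$ rather than an absolute one, and (ii) a ``pushing'' argument transporting the bound $p_r(V)\ge\cdicho$ from the zero-boundary setting to general boundary data — precisely the planar, absolute-value-FKG machinery developed around Lemma~\ref{lem:pos_abs} and Theorem~\ref{thm:sharpness_NEW}. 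Once this is in hand, checking that the aspect ratios and scale windows built into $p_r(V)$ are compatible with the ratios $r_i/r_{i-1}$ and with the constraints $n\ge8000$, $m\le n/8$ is routine bookkeeping.
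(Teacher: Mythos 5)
Your first step is correct: if $p_N(V)<\cdicho$ for some $N\ge 1000$ then the second coarse-graining inequality forces exponential decay of the observable and hence $V\in\PhiLoc$, so indeed $p_n(V)\ge\cdicho$ for all $n\ge 1000$ whenever $V\in\PhiDeloc$. The martingale decomposition over dyadic (or $\rho$-adic) scales is also a natural place to start. However, the per-scale estimate $\E\big[\Var[M_j\mid\calF_{j-1}]\big]\ge c$ is precisely the step the paper points out cannot be carried out in the way you propose, and the paper's proof is built specifically to avoid it. Conditioning on $\calF_{j-1}$ exposes the annulus to an \emph{arbitrary} boundary profile $\eta$ on $\partial\Lambda_{r_{L-j+1}}$, not merely a profile that is constant up to translation. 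The observable $p_r(V)$ is defined through $\mu_{\Lambda,\tau,0}$ with zero boundary height, and neither the absolute-value FKG machinery (Lemma~\ref{lem:pos_abs}, Lemma~\ref{lem_mono_domains}) nor the pushing estimates (Lemma~\ref{lemma:pushing}, Corollary~\ref{cor:push}) transport a \emph{uniformly positive} lower bound to arbitrary $\eta$: the pushing estimates degenerate exponentially in the aspect ratio, so a steep revealed profile can in principle kill the circuit event at every height relative to any reference level $\ell(\eta)$. The paper states this explicitly in the proof overview: without a priori Lipschitz control or upper bounds on $\Var_{\mu_{\Lambda_n}}[H_m]$, one cannot simply argue that delocalising circuits appear at each scale, and that upper-bound route is only available for special potentials such as the discrete Gaussian.

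The paper's actual argument in Lemma~\ref{lem:deloclem} circumvents this by never conditioning on arbitrary boundary data. Writing $v_k:=\Var_{\mu_{\Lambda_{2^km}}}[\alpha(h)]$, it fixes a zero-boundary domain achieving $p_{2^km}(V)\ge\cdicho/2$ and tracks circuits of $\calL_{F_\ell}$ (Fibonacci heights) across nested annuli. If these circuit events persist all the way down ($s_k=k$), a single martingale step against the largest circuit gives $v_{k+3}\ge(\cdicho/2)\phi^k$ directly. If instead the circuit at height $F_{s_k+1}$ fails at scale $s_k<k$, that failure is converted, via flip symmetry around the explored $\calL_{F_{s_k}}$-circuit and the absolute edge height $\Delta$, into macroscopic percolation of $\{\lceil\Delta\rceil\ge\phi^{s_k-2}\}$, which together with Lemma~\ref{lemma:cov} and Lemma~\ref{lemma:abs_fkg_compare_zero} yields the difference inequality $v_{k+4}\ge v_{k-s_k}+c'\phi^{s_k}$. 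It is precisely this dichotomy — circuits appear, or their failure itself produces a large height fluctuation — that replaces the uniform per-scale estimate your proposal requires and cannot justify. In short: the gap is not in the martingale bookkeeping but in the hypothesis that $\Var[M_j\mid\calF_{j-1}]$ is bounded below uniformly in the revealed configuration; that hypothesis is exactly what the proof has to work around.
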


We do not derive a general \emph{upper bound} on the variance,
but do remark that in certain cases (such as the square potential),
a logarithmic upper bound can be derived easily via Gaussian domination (cf.~\cite{AizenmanHarelPeled_2022_DepinningIntegerrestrictedGaussian}).

The lower bound is valid for the variance of any 
(weighted) average over the heights at vertices belonging to $\Lambda_m^\dscrt$,
not just circle averages.
The constant $\ceff$ is universal: it does not depend on the choice of the potential
$V$.
The constant seems to encode
a fundamental property of two-dimensional Euclidean space,
and arises naturally from the Russo-Seymour-Welsh theory applied to a percolation representation
of the height functions model.

The effective temperature gap is fundamentally linked to the
 \emph{integer-valued} nature of our height functions (for real-valued height functions, there is no such gap).
One way in which the discrete nature of the height function values plays a role is that high- and low-value regions of the height function are separated by
``cliff lines'' (in the dual graph) which have a height $\leq a$ on one side and a height
$\geq a+1$ on the other side.
For real-valued height functions, such ``cliff lines'' need not exist as they can be smoothed out into a larger
region with a small gradient.

If the height function converges to the GFF
and if the dual model converges to a conformal field theory,
then the value of $\betaeff(V)$ is directly linked to the conformal parameters of the dual theory.
For example, in the six-vertex model, the value of $\betaeff(V)$ is related to the value of 
$\kappa$, where the conformal loop ensemble $\operatorname{CLE}(\kappa)$ conjecturally describes the scaling limit of the corresponding random-cluster model in the 
Baxter--Kelland--Wu coupling.
Rigorous relations have already been established between $\betaeff(V)$ and
the one- and two-arm exponents in the two-dimensional random-cluster model (cf.~\cite{Duminil-CopinKajetanKozlowskiLammers_2026_GaussianFreeField} and references therein).
The value of $\betaeff(V)$ therefore has more meaning than that of a ``simple multiplicative constant'' in the scaling limit.

\subsubsection{Finite-size criterion}

Our results will follow from a coarse-graining inequality applied to an observable
in finite-volume measures $\mu^\dscrt_{\Lambda_n^\dscrt}$.
As we shall see, this implies the following:
the mass $\m(V)$ is strictly positive if and only if
the finite-size observable drops below a fixed treshold value
for some $n$.
Such a result is called a \emph{finite-size criterion}.
Since the finite-size observable is continuous in the choice of the potential
$V\in\Phi$, this leads to the following two results,
related to Question~3 in Objective~\ref{objectives}.

\begin{bigtheorem}[Height functions are delocalised at the phase transition]
    \label{thm:deloc_at_crit}
    The sets $\PhiLoc$ and $\PhiDeloc$ are respectively open and closed in $(\Phi,\calT)$.
\end{bigtheorem}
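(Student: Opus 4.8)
Only one of the two sets needs to be handled, since $\PhiLoc$ and $\PhiDeloc$ partition $\Phi$; I would show that $\PhiDeloc$ is $\calT$-closed. The plan is to realise it as a countable intersection of closed sets, combining the dichotomy of Lemma~\ref{lemma:preface_second_coarse_graining} with the $\calT$-continuity of the observables $p_n$. The first step is to read off the phase of a potential from which side of the dichotomy it occupies. On the one hand, if $(p_{kn}(V))_{k\geq1}$ decays exponentially for some fixed scale $n$, then $V\in\PhiLoc$: the observable bounds the probability of a macroscopic height-change circuit, and summability of these probabilities along a geometric sequence of annular scales, together with the absolute-value-FKG property (Lemma~\ref{lem:pos_abs}), traps the height near the origin and bounds $\Var_{\mu_\Lambda}[h_{(0,0)}]$ uniformly in $\Lambda$. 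On the other hand, if $p_n(V)\geq\cdicho$ for every $n\in\Z_{\geq1000}$, then $V\in\PhiDeloc$; this is precisely the input to Theorem~\ref{thm:gap}, where a Russo--Seymour--Welsh argument converts a uniform lower bound on the observable into logarithmic growth of $\Var_{\mu_{\Lambda_n}}[h_{(0,0)}]$. Combining these two implications with the dichotomy yields
\[
    \PhiDeloc
    =\bigcap_{n\in\Z_{\geq1000}}\{V\in\Phi:p_n(V)\geq\cdicho\}
    =\bigcap_{n\in\Z_{\geq1000}}p_n^{-1}\bigl([\cdicho,1]\bigr),
\]
where the inclusion $\supseteq$ is immediate from the second implication, while $\subseteq$ holds because a delocalising potential cannot lie on the exponentially decaying side of the dichotomy.

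Granting this identity, the theorem follows at once: each $p_n\colon(\Phi,\calT)\to[0,1]$ is continuous, so each $p_n^{-1}([\cdicho,1])$ is $\calT$-closed, hence so is their intersection $\PhiDeloc$, and consequently $\PhiLoc=\Phi\setminus\PhiDeloc=\bigcup_{n\in\Z_{\geq1000}}p_n^{-1}\bigl([0,\cdicho)\bigr)$ is open. (Note that the argument gives no information about connectedness of $\PhiLoc$ or $\PhiDeloc$, and none is claimed.)

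The substantive point, and the main obstacle, is the $\calT$-continuity of the maps $p_n$. The difficulty is that $\calT$ is the pull-back of the $\ell^\infty$-topology on $a\mapsto e^{-V(a)+V(0)}$ (together with the asymptotic slope), so a $\calT$-small perturbation of $V$ multiplies the Boltzmann weights on any fixed range of height differences by factors near $1$ but controls $V(a)$ for large $|a|$ only indirectly, whereas the finite-volume measures defining $p_n$ are supported on all of $\Omega_\Lambda$, where arbitrarily large gradients are a priori possible. This is overcome using the super-Gaussian, hence convex, hypothesis: after normalising $V(0)=0$, the increments of $V$ are non-decreasing, so $V(a)\geq a\,V(1)$ for $a\geq1$, while $V(1)=-\log e^{-V(1)+V(0)}$ is $\calT$-continuous and bounded away from $0$ on a neighbourhood of any given potential; this yields an exponential tail $e^{-V(a)}\leq e^{-c|a|}$ that is uniform over the neighbourhood. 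Hence the measures $\mu_\Lambda^W$ with $W$ close to $V$ are uniformly tight for the maximal gradient, the event defining $p_n$ can be approximated, uniformly over the neighbourhood, by its restriction to the finite set of configurations whose gradients are bounded by a large constant, on that set the weights converge by $\ell^\infty$-convergence of $e^{-W+W(0)}$, and a weight-by-weight comparison of numerator against the (bounded-below) partition function gives $p_n(W)\to p_n(V)$. Once continuity is in place, the topological deduction above finishes the proof.
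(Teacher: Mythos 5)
Your topological reduction is exactly what the paper has in mind: the dichotomy in Lemma~\ref{lemma:preface_second_coarse_graining} together with Lemmas~\ref{lemma:explicit_localised_annulus_bound} and~\ref{lem:deloclem} does give $\PhiDeloc=\bigcap_{n\geq1000}p_n^{-1}([\cdicho,1])$, and once each $p_n$ is continuous this is a countable intersection of closed sets, so $\PhiDeloc$ is closed and $\PhiLoc$ is its open complement. The paper itself treats Theorem~\ref{thm:deloc_at_crit} as an immediate corollary of the continuity of $p_n$ (Lemma~\ref{lemma:cts_observable}) combined with the second coarse-graining inequality, so that part of your argument matches.

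The gap lies in your sketch of the continuity of $p_n$, and it is substantive. You identify the obstacle as tightness of large gradients in a fixed finite-volume measure, and the uniform lower bound on increments coming from the super-Gaussian property does indeed yield tails $e^{-c|a|}$ uniformly on a neighbourhood of $V$. But that is only one ingredient, and not the one the paper's Section~\ref{sec:continuity} is devoted to. The observable is a \emph{supremum}
\[
p_n(V)=\sup_{(\Lambda,\tau)\in\Geom_{4n,8n}}\mu^V_{\Lambda,\tau,0}\big(\calA^*(\calL_1,n)\big),
\]
and while $\Lambda$ ranges over a finite set, the truncation function $\tau:\partial_e\Lambda\to\Z_{\geq0}$ is pinned to zero only on $\partial_e\Lambda_{4n}\cap\partial_e\Lambda$ and is otherwise unbounded. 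A supremum of a family of $V$-continuous maps is in general only lower semicontinuous, and your ``weight-by-weight comparison'' argument applies to a single $(\Lambda,\tau)$, not to the supremum over infinitely many truncations. The paper closes this by first proving uniform tightness of $h$ on the interior boundary (Lemma~\ref{lemma:intermediatestepblowup}), and then analysing the conditioned boundary-edge laws $\pi^{V^{[\tau]}}(\,\cdot\mid|k|\leq N)$: as $\tau\to\infty$ these converge either to $\delta_0$ or to a geometric distribution $\mu_\eta$ according to whether the asymptotic slope of $V$ is infinite or finite, and the resulting family $\Xi(V)$ is shown to depend continuously on $V$ in the Hausdorff metric. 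Only after this reduction to a compact, continuously varying family of effective boundary conditions can the supremum defining $p_n$ be seen to be continuous. Your sketch never mentions $\tau$ and so does not establish Lemma~\ref{lemma:cts_observable}, which is the load-bearing input to Theorem~\ref{thm:deloc_at_crit}.
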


\begin{bigtheorem}[The mass is locally uniformly positive]
    \label{thm:local_exp}
    Each $W\in\PhiLoc$ admits a neighbourhood $\calN_W$ such that
    \( \inf_{V\in\calN_W}\|\cdot\|_V \) is a norm 
    (in particular, it is positive definite),
    where the norms are those introduced in Theorem~\ref{thm:sharpness_NEW}.
\end{bigtheorem}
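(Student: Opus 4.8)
The plan is to produce a lower bound on the exponential decay rate of the observable $p_n$ that is \emph{uniform} over a small neighbourhood of $W$, and then to feed this into the quantitative content of Theorem~\ref{thm:sharpness_NEW} to obtain a uniform lower bound on the norms $\|\cdot\|_V$. Beyond Lemma~\ref{lemma:preface_second_coarse_graining} and Theorem~\ref{thm:sharpness_NEW}, the two new ingredients are the continuity of $V\mapsto p_{n}(V)$ and the openness of $\PhiLoc$ (Theorem~\ref{thm:deloc_at_crit}).

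\emph{Step 1 (a good scale for $W$).} Fix $W\in\PhiLoc$. As part of the proof of Theorem~\ref{thm:sharpness_NEW} one knows that the observable staying bounded below along $\Z_{\geq 1000}$ forces delocalisation; hence the first alternative in Lemma~\ref{lemma:preface_second_coarse_graining} fails for $W$, and there is a scale $n_0\in\Z_{\geq 1000}$ with $p_{n_0}(W)<\cdicho$. Put $\lambda:=\tfrac12\bigl(1+p_{n_0}(W)/\cdicho\bigr)\in(0,1)$.

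\emph{Step 2 (uniformisation over a neighbourhood).} By continuity of $p_{n_0}\colon(\Phi,\calT)\to[0,1]$ there is an open neighbourhood $\calN_W\ni W$ on which $p_{n_0}(V)\leq\lambda\cdicho$; shrinking $\calN_W$ and invoking Theorem~\ref{thm:deloc_at_crit}, I may also assume $\calN_W\subset\PhiLoc$, so that $\|\cdot\|_V$ is a genuine norm for each $V\in\calN_W$. Inserting the uniform bound $p_{n_0}(V)\leq\lambda\cdicho$ into the second coarse-graining inequality~\eqref{eq:original_second_coarse_graining} gives, for every $V\in\calN_W$,
\[
    p_{20kn_0}(V)\leq\bigl(p_{n_0}(V)/\cdicho\bigr)^{k}\leq\lambda^{k}\qquad\forall k\in\Z_{\geq 1}.
\]
Thus $(p_m(V))_m$ decays exponentially, at a rate $r=r(\lambda,n_0)>0$ that does \emph{not} depend on $V\in\calN_W$ (pass from multiples of $20n_0$ to arbitrary $m$ using monotonicity of $p_m$ in $m$, or crudely via $p_m\leq p_{20kn_0}$ with $20kn_0$ the largest such multiple below $m$).

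\emph{Step 3 (from the observable to the norm).} The proof of Theorem~\ref{thm:sharpness_NEW} converts exponential decay of $(p_n(V))_n$ at rate $r$ into exponential decay of $\Cov_\mu[h_x,h_y]$, and more precisely yields a bound $\|x\|_V\geq c(r)\,\|x\|_2$ for all $x\in\R^2$ for the norm constructed there, with $c(r)>0$ depending on $V$ only through $r$ (and through universal constants). Applying this with the common rate $r=r(\lambda,n_0)$ gives $\|\cdot\|_V\geq c(r)\,\|\cdot\|_2$ simultaneously for all $V\in\calN_W$, so that $\inf_{V\in\calN_W}\|\cdot\|_V\geq c(r)\,\|\cdot\|_2$ is positive definite. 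The upgrade of this infimum to a bona fide norm is a soft addendum, obtained from the continuity of the map $V\mapsto\|\cdot\|_V$ (also established in the proof of Theorem~\ref{thm:sharpness_NEW}) together with a suitable — e.g.\ relatively compact — choice of $\calN_W$ inside $\PhiLoc$.

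The step I expect to be the main obstacle is securing the $V$-uniformity used in Step~3: the argument collapses unless the quantitative form of Theorem~\ref{thm:sharpness_NEW} delivers a correlation-length lower bound whose constants are genuinely universal, depending on the potential only via the decay rate of its observable. In other words, the real work lies in organising the sharpness proof so that it is visibly uniform across $\Phi$; granting that, the combinatorial and topological steps above (the dichotomy, the continuity of $p_{n_0}$, and the openness of $\PhiLoc$) are comparatively routine.
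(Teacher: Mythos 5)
Your proposal takes exactly the route the paper intends: fix a scale $n_0$ with $p_{n_0}(W)<\cdicho$; use the continuity result (Lemma~\ref{lemma:cts_observable}) to uniformise this bound with a margin over a neighbourhood $\calN_W$; push the uniform bound through the second coarse-graining inequality to obtain a decay rate of the observable that is common to all $V\in\calN_W$; and then trace the universal constants in Lemmas~\ref{lemma:explicit_localised_annulus_bound} and~\ref{lemma:moreorlesssharpness} to produce a lower bound on $\|\cdot\|_V$ which depends only on that rate. The paper itself says nothing more explicit than that Theorem~\ref{thm:local_exp} ``follows from this lemma by carefully tracing all the constants in Section~\ref{sec:loc}'', which is precisely what you do. One small point: the appeal to Theorem~\ref{thm:deloc_at_crit} in Step~2 is redundant, since $p_{n_0}(V)<\cdicho$ already yields localisation (and hence that $\|\cdot\|_V$ is a genuine norm) by Lemma~\ref{lemma:explicit_localised_annulus_bound}.

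There is, however, one inaccuracy in your final paragraph. You say the continuity of $V\mapsto\|\cdot\|_V$ is ``also established in the proof of Theorem~\ref{thm:sharpness_NEW}''. It is not: the paper explicitly remarks, right after Theorem~\ref{thm:local_exp}, that ``we expect the stronger statement that the map $V\mapsto \|\cdot\|_V$ is also continuous to hold true, but do not prove it here.'' So you cannot use it. This only bears on the softer ``is a norm'' clause — the infimum of a family of norms is automatically homogeneous and, by your Steps~1--3, positive definite, but the triangle inequality for the pointwise infimum does not come for free — and it is worth noting that the paper's own one-line proof sketch does not obviously address this point either. The substantive content, namely that $\inf_{V\in\calN_W}\|\cdot\|_V$ is bounded below by a positive multiple of $\|\cdot\|_2$, is correctly handled by your argument.
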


In fact, we expect stronger results to hold true,
as is expressed in the following conjecture.

\begin{bigconjecture}[Properties of the mass]
    All of the following hold true:
    \begin{itemize}
        \item The map $V\mapsto \|\cdot\|_V$ is a continuous 
function on $(\Phi,\calT)$,
        \item Its restriction to $\PhiLoc$ is analytic with respect to a natural analytic structure.
    \end{itemize}
\end{bigconjecture}

\subsection{Generalisations}
\label{subsec:into:sym_gen}

The proofs rely in an essential way on the symmetries of the
square lattice.
More precisely, these symmetries are:
\begin{enumerate}
    \item Translational symmetry along a rank-two sublattice,
    \item Flip symmetry around some axis,
    \item Rotational symmetry by an angle of $\pi/2$.
\end{enumerate}
The results extend to models with the same symmetry group.
Let us now give a formal account of this more general setup.
Let $\G=(\V,\E)$ denote a planar graph, and let $V=(V_{xy})_{xy\in\E}\subset\Phi$
denote an assignment of potentials to the edges of the graph.
The \emph{automorphism group} $\operatorname{Aut}(\G,V)$ of the model
contains the set of automorphisms $\phi$ of $\G$ with the property
that $V_{\phi(xy)}=V_{xy}$ for all edges $xy\in\E$.

\subsubsection{To graphs with the same symmetry group}
All results generalise to
models $(\G,V)$ such that $\operatorname{Aut}(\G,V)$
contains a rank-two lattice, a flip symmetry around some axis,
and a rotational symmetry by an angle of $\pi/2$.
This is the precise symmetry group required
in~\cite{Duminil-CopinTassion_2019_RenormalizationCrossingProbabilities} and~\cite{Kohler-SchindlerTassion_2023_CrossingProbabilitiesPlanar}.
For example, this includes models on
the octagonal lattice (also known as the truncated square lattice).
This graph has degree three,
and delocalisation was established for all potentials $V\in\Phi$ with
\(
    T(V)^{-1}:=V(1)-V(0)\leq \log 2
\)~\cite{Lammers_2022_HeightFunctionDelocalisation} (cf.~Figure~\ref{fig:phase_diagram}).

\subsubsection{To graphs with other symmetry groups}
We claim (without giving a formal, complete justification)
that the theory also generalises to models $(\G,V)$ such that
$\operatorname{Aut}(\G,V)$ contains a rank-two sublattice,
a flip symmetry around some axis,
and a rotational symmetry by an angle $\pi/3$.
This includes models on the triangular and hexagonal lattices
(where the hexagonal lattice has degree three, so that~\cite{Lammers_2022_HeightFunctionDelocalisation} applies).

\subsubsection{The XY model and the Poisson potential}

The Poisson potential is of particular importance as it appears in the height function dual to the XY model.
We mentioned that the potential $V_\beta(a):=-\log I_a(\beta)$ satisfies $V_\beta\in\Phi$ for small $\beta>0$.
Although it is not known whether $V_\beta\in\Phi$ for all $\beta>0$, our results still apply
to all $\beta>0$ via a simple workaround, going back to~\cite{EngelenburgLis_2023_ElementaryProofPhase}.
Indeed, we have $e^{-V_{\beta}}\propto e^{-V_{\beta/2}}*e^{-V_{\beta/2}}$,
where $*$ denotes convolution over $\Z$.
Consider now the square lattice graph $\Z^2$,
and the graph $\G$ obtained from $\Z^2$
by replacing each edge $xy$ by two edges linked in series,
and such that the new vertex lies precisely halfway between $x$ and $y$ (see~Figure~\ref{fig:series}).
If $h$ is the random height function on $\G$ with potential $V_{\beta/2}$,
then $h|_{\Z^2}$ has the distribution of the height function on $\Z^2$ with potential $V_{\beta}$.
Thus, even if $V_{\beta}\not\in\Phi$, we may:
\begin{itemize}
    \item Replace the potential $V_\beta$ by $V_{\beta/n}\in\Phi$ (where $n$ is sufficiently large),
    \item Modify the graph by replacing each edge by $n$ edges linked in series,
    \item Run the entire argument, taking into account the above generalisations. 
\end{itemize}
Similar ideas lead to the following generalisation of our main results.

\begin{bigtheorem}
    \label{thm:extension}
    Theorems~\ref{thm:sharpness_NEW}, \ref{thm:gap}, \ref{thm:deloc_at_crit}, and \ref{thm:local_exp}
    extend to the class of potentials $\tilde\Phi$ of \emph{convoluted super-Gaussian potentials},
    that is, potential functions $V$ that may be written
    \[
        e^{-V} = e^{-V_1} *  \cdots * e^{-V_n}
    \]
    for $n\in\Z_{\geq 1}$ and $(V_i)_i\subset \Phi$.
    In particular, $\tilde\Phi$ contains the Poisson potential $V_\beta$ for any $\beta$.
\end{bigtheorem}

The case of the Poisson potential is particularly convenient,
because the edge distribution is infinitely divisible (one may consider the $n\to\infty$ limit),
and this makes the proof of the main results somewhat simpler than for general potentials (see Section~\ref{section:extension_poisson_XY}).

\begin{figure}
  \centering
  \includegraphics{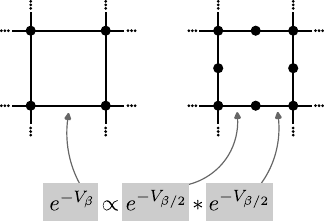}
  \caption{Theorem~\ref{thm:extension}. By replacing each edge by two (or more) edges linked in series,
  we may extend the results from the class $\Phi$ to the class of potentials obtained
  by ``convoluting'' potentials in $\Phi$.}
  \label{fig:series}
\end{figure}

\subsubsection{Other height function models}
The method exhibited here is expected to work for other integer-valued height function models on biperiodic planar graphs,
as soon as they satisfy some form of the absolute-value-FKG inequality.
The key lies in finding a good combinatorial representation of the model
that recovers a sufficient amount of planarity and symmetry to run the percolation arguments (cf.~\cite{GlazmanLammers_2025_DelocalisationContinuity2D,Karrila_2023_LogarithmicDelocalizationRandom},
which appeared after the initial version of this manuscript was posted online).

\subsection{Developments in the field}
\label{subsec:intro:developments}
\subsubsection{The discrete Gaussian model and the delocalised phase}
\label{subsubsec:intro:dG}

The discrete Gaussian model ($V(a)=\beta a^2$) is among
the earliest discrete height functions to appear in the mathematics literature. It is
the natural dual to the Villain model.
The Villain model is a close cousin of the XY model:
it is essentially an XY model with a modified potential to facilitate its analysis.
The model was proved to delocalise with
logarithmic variance growth in~\cite{FrohlichSpencer_1981_KosterlitzThoulessTransitionTwodimensional} (cf.~\cite{KharashPeled_2017_FrohlichSpencerProofBerezinskiiKosterlitzThouless}),
constituting the first height function delocalisation result. That work rigorously establishes
the BKT transition for the XY and Villain models, which was predicted independently by
Berezinskii~\cite{Berezinskii_1971_DestructionLongrangeOrder,Berezinskii_1972_DestructionLongrangeOrder} and later Kosterlitz and
Thouless~\cite{KosterlitzThouless_1973_OrderingMetastabilityPhase}. 
At low temperatures,
height function localisation is relatively easy to prove via the Peierls argument~\cite{BrandenbergerWayne_1982_DecayCorrelationsSurface,Peierls_1936_IsingsModelFerromagnetism}.

Several works on the BKT transition and on the models surrounding it have recently appeared.
Any Gibbs measure of the Villain model may be written as the independent product
of a massless Gaussian free field (the \emph{spin wave})
with a probability measure on vortices and antivortices.
In~\cite{GarbanSepulveda_2023_QuantitativeBoundsVortex}, Garban and Sepúlveda
compare the fluctuations of the model coming from these two components,
and derive that the fluctuations induced by the vortex-antivortex measure are at least of the same order 
of magnitude as those coming from the spin wave.
The vortex-antivortex measure is in direct correspondence with yet another model of interest
to physicists called the \emph{Coulomb gas};
refer to the extensive review of Lewin~\cite{Lewin_2022_CoulombRieszGases} for an overview of all (including recent) developments.
In another work~\cite{GarbanSepulveda_2023_StatisticalReconstructionGFF},
the same authors prove a quantitative lower bound on the delocalisation of the discrete Gaussian model 
when the height function is not $\Z$-valued,
but rather $(a(x)+\Z)$-valued where for each vertex
$x\in\Z^2$ the number $a(x)\in\mathbb R$ denotes an arbitrary constant.
This is remarkable because the setup essentially lacks any symmetry.

Independently of these developments, the link between spin models and height functions
was intensified in two articles:
Aizenman, Harel, Peled, and Shapiro proved that height function delocalisation 
implies polynomial decay for the two-point function in the Villain model~\cite{AizenmanHarelPeled_2022_DepinningIntegerrestrictedGaussian},
and simultaneously Van Engelenburg and Lis proved the equivalent result for the XY model~\cite{EngelenburgLis_2023_ElementaryProofPhase}.
Both articles use the delocalisation result in~\cite{Lammers_2022_HeightFunctionDelocalisation}
as an input;~\cite{AizenmanHarelPeled_2022_DepinningIntegerrestrictedGaussian} also extends this delocalisation proof
to the discrete Gaussian model on the square lattice.

In a breakthrough series of two papers, Bauerschmidt, Park, and
Rodriguez~\cite{BauerschmidtParkRodriguez_2024_DiscreteGaussianModel,BauerschmidtParkRodriguez_2024_DiscreteGaussianModela} prove
convergence to the Gaussian free field of the discrete Gaussian model through
the renormalisation group flow.
Their arguments apply at high temperature
so that the renormalisation group flow is initialised close to the limit point.

\subsubsection{Quantitative delocalisation results}
\label{subsubsec:intro:quant_overview}

Both the original Fröhlich-Spencer proof and the renormalisation strategy 
(exhibited in~\cite{Duminil-CopinHarelLaslier_2022_LogarithmicVarianceHeight} and in this article)
lead to logarithmic delocalisation,
but there are several other routes (depending on the model) that lead to the same result.

\begin{itemize}
    \item \emph{Dimer model.}
    Kenyon used integrable features
    to prove that the scaling limit of the dimer model is the Gaussian free field~\cite{Kenyon_2000_ConformalInvarianceDomino,Kenyon_2001_DominosGaussianFree},
    which was later extended to small perturbations of the dimer model~\cite{GiulianiMastropietroToninelli_2017_HeightFluctuationsInteracting}.
    This result is much stronger than logarithmic delocalisation.
    \item \emph{Square ice.}
    For the square ice model (six-vertex model with uniform parameters $a=b=c=1$),
    delocalisation of the height function
    was first observed by
    Chandgotia, Peled, Sheffield, and Tassy
    in~\cite{ChandgotiaPeledSheffield_2021_DelocalizationUniformGraph},
    which mentions~\cite{Sheffield_2005_RandomSurfaces} as already containing the more general statements
    that imply the result.
    Duminil-Copin, Harel, Laslier, Raoufi, and Ray
    independently
    implemented the dichotomy strategy to quantify the delocalisation as being
    logarithmic~\cite{Duminil-CopinHarelLaslier_2022_LogarithmicVarianceHeight}.

    \item \emph{Six-vertex model with $a=b=1$ and $c\geq 1$.}
    The full phase diagram of this parameter range is now understood.
For  $1\leq c\leq 2$, Duminil-Copin, Karrila, Manolescu, and Oulamara proved
logarithmic delocalisation~\cite{Duminil-CopinKarrilaManolescu_2024_DelocalizationHeightFunction};
they use the Bethe ansatz as an input to derive macroscopic crossing estimates,
then use Russo-Seymour-Welsh theory to turn these estimates into the desired delocalisation result.
The strategy is thus very different from the renormalisation strategy exhibited in this article.
For $c>2$, Glazman and Peled~\cite{GlazmanPeled_2023_TransitionDisorderedAntiferroelectric}
proved that there exist \emph{two} ergodic gradient Gibbs measures
(thus showing that the planar requirement is genuinely necessary for uniqueness).
Their result relies on the Baxter-Kelland-Wu coupling with the critical random-cluster model for $q>4$,
together with the discontinuity result~\cite{Duminil-CopinGagnebinHarel_2021_DiscontinuityPhaseTransition} mentioned above.
There also exist other delocalisation arguments covering part of the interval
$c\in[1,2]$, namely
the transition point $c=2$~\cite{Duminil-CopinGagnebinHarel_2021_DiscontinuityPhaseTransition,Duminil-CopinSidoraviciusTassion_2017_ContinuityPhaseTransition},
the free-fermion point $c=\sqrt2$~\cite{Kenyon_2000_ConformalInvarianceDomino,Kenyon_2001_DominosGaussianFree}
and a small neighbourhood~\cite{GiulianiMastropietroToninelli_2017_HeightFluctuationsInteracting},
and the range $[(2+2^{1/2})^{1/2},2]$~\cite{Lis_2021_DelocalizationSixvertexModel}.

\item \emph{Loop $\operatorname{O}(2)$ model.}
Duminil-Copin, Glazman, Peled, and
Spinka~\cite{Duminil-CopinGlazmanPeled_2020_MacroscopicLoopsLoop} proved the existence of large loops 
in the loop~$\operatorname{O}(n)$ model at the Nienhuis transition point
for $n\in[1,2]$.
This results may be phrased as a height function delocalisation result
(with logarithmic variance) at the point $n=2$;
the corresponding value for $x$ is $x_c=1/\sqrt2$.
The authors use the parafermionic observable to derive macroscopic crossing estimates,
then use a Russo-Seymour-Welsh theory to turn the loop segments so obtained into large loops.
By contrast, Glazman and Manolescu used planar percolation
to prove delocalisation 
for the uniformly random $1$-Lipschitz on the triangular lattice
(that is, the loop~$\operatorname{O}(2)$ model with the parameter $x=1$),
together with the renormalisation strategy to quantify the delocalisation~\cite{GlazmanManolescu_2021_UniformLipschitzFunctions}.
\end{itemize}

\subsubsection{Localisation-delocalisation in higher dimension}

Height functions are expected to localise in dimension $d\geq 3$ in a rather general setting.
This has been proved rigorously for the discrete Gaussian model~\cite{FrohlichSimonSpencer_1976_InfraredBoundsPhase,FrohlichIsraelLieb_1978_PhaseTransitionsReflection}
and the solid-on-solid model~\cite{BricmontFontaineLebowitz_1982_SurfaceTensionPercolation} in dimension three and higher,
as well as for the uniformly random Lipschitz function in sufficiently high dimension~\cite{Peled_2017_HighdimensionalLipschitzFunctions}.

It is a general phenomenon that the critical dimension of a lattice model
changes after introducing a random disorder.
For example, it was proved recently that the two-dimensional Ising model
exhibits exponential decay at all temperatures after the introduction of a 
random disorder~\cite{DingXia_2021_ExponentialDecayCorrelations,AizenmanHarelPeled_2020_ExponentialDecayCorrelations}.
In the domain of height functions, Dario, Harel, and Peled showed that a wide class of 
\emph{real-valued} height function delocalises in dimension $d\leq 4$
after introducing a random disorder~\cite{DarioHarelPeled_2023_RandomfieldRandomSurfaces}.
For the integer-valued discrete Gaussian model they prove however 
that the height function is already localised in dimension $d=3$ when the disorder is weak;
the localisation-delocalisation question is left open for a strong disorder.

\subsubsection{Recent developments}
After a first version of this work was posted on the arXiv, a few other relevant articles appeared online.
Delocalisation of tilted height functions was obtained in~\cite{OttSchweiger_2025_QuantitativeDelocalizationSolidonsolid}.
Karrila~\cite{Karrila_2023_LogarithmicDelocalizationRandom} implemented a dichotomy strategy
for uniformly random Lipschitz functions, although the delocalisation input on the square lattice is still missing.
A qualitative (percolation-oriented) strategy for delocalisation in the six-vertex and loop~$\operatorname{O}(2)$ models with $c\in[1,2]$ and $x\in[x_c,1]$ respectively
was derived in~\cite{GlazmanLammers_2025_DelocalisationContinuity2D}.
The recent work of Van Engelenburg and Lis~\cite{EngelenburgLis_2025_DualityHeightFunctions} intensified the relationship
between the height function and the dual spin model via a duality relation (an exact identity).
The scaling limit of the six-vertex model with $c\in[\sqrt3,2]$ was determined in~\cite{Duminil-CopinKajetanKozlowskiLammers_2026_GaussianFreeField},
which led to the calculation of several arm exponents in the dual two-dimensional random-cluster model~\cite{alpha1,alpha2,qcloseto4,qequal4}.

\subsection{A new version of a renormalisation inequality}

At the heart of our analysis is an \emph{interface coarse-graining inequality}
which relies exclusively on probabilistic (non-integrable) techniques,
and is stated in terms of a percolation representation of the model.
This is necessary because our height function are expected not to be integrable
(unlike, for example, the height function of the six-vertex model).

In 2017, Duminil-Copin, Sidoravicius, and Tassion~\cite{Duminil-CopinSidoraviciusTassion_2017_ContinuityPhaseTransition} 
found a new inequality for the two-dimensional random-cluster model.
They introduce an observable
$a_n(q)\in[0,1]$ which measures
(for a given cluster
weight $q\geq 1$) the likelihood 
    of seeing a macroscopic interface between
the free and the wired clusters at the scale $n$.
They use percolation theory to prove that there exists a
universal constant $\crenorm>0$ such that for any $q$,
\begin{equation}
    \label{eq:original_renormalisation_inequality}
    a_{7n}(q)\leq a_n(q)^2/\crenorm \qquad \forall n.
\end{equation}
The beauty of this equation, called a \emph{renormalisation inequality}, is that
it holds true in all regimes simultaneously (more precisely, the inequality
holds true at and around the transition point $q=4$).
Yet at the same time the
inequality implies a dichotomy;
iterating~\eqref{eq:original_renormalisation_inequality} implies that one of the following two must hold true:
\begin{itemize}
    \item Either $a_n(q)\geq\crenorm$ for all $n$, or
    \item $a_n(q)<\crenorm$ for some $n$, in which case $a_{n}(q)\to0$ stretch-exponentially fast in $n$.
\end{itemize}
The side of the dichotomy depends on $q$
(the former case occurs for $q\leq 4$; the latter for $q>4$).
The asymptotic behaviour of the observable encodes the phase
of the model.
The equation is called a \emph{renormalisation inequality} because it is inspired by
the renormalisation group picture for percolation models, which is a well-developed
theory in physics but which is (in general) hard to make rigorous mathematically.

This renormalisation strategy was employed twice in the context of
height functions: by Glazman and Manolescu~\cite{GlazmanManolescu_2021_UniformLipschitzFunctions} in the
context of the loop~$\operatorname{O}(2)$ model
and by Duminil-Copin, Harel, Laslier, Raoufi,
and Ray for the square ice (the six-vertex model with uniform weights)~\cite{Duminil-CopinHarelLaslier_2022_LogarithmicVarianceHeight}.
In both cases the dichotomy was used at a single point in the phase diagram to better understand the delocalised
height function (and not to study the localisation-delocalisation transition in general).
Thus, in our case, we use the same inequality to derive very different results.

Our variation of the renormalisation inequality is stated below (Equation~\eqref{eq:original_second_coarse_graining}; Lemma~\ref{lemma:preface_second_coarse_graining}).
We call it a \emph{coarse-graining inequality}
because it leads directly to exponential decay of the observable
(rather than to stretch-exponential decay, and without iterating the inequality).
This is due to a new, small but useful optimisation of the percolation argument,
which also applies to the random-cluster model (and the other models mentioned above).
We shall not yet give a formal definition of the observable $p_n(V)$.
Figure~\ref{fig:simplified_observable} gives a simplified definition,
and we should think of $p_n(V)$ as being the natural analogue of $a_n(q)$
in~\cite{Duminil-CopinSidoraviciusTassion_2017_ContinuityPhaseTransition}.

\begin{figure}
    \centering
    \begin{minipage}{0.3\textwidth}
        \centering
        \includegraphics{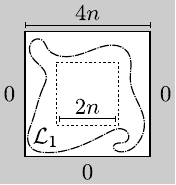}
    \end{minipage}%
    \begin{minipage}{0.45\textwidth}
        \small
        \textsc{Legend.}
        Percolation figures in this article use the following pattern:
        \begin{itemize}[leftmargin=*]
            \item \textbf{White area:} the domain $\Lambda$,
            \item \textbf{Fat lines:} height zero level lines,
            \item \textbf{Fat dashed lines:} height one level lines.
        \end{itemize}
    \end{minipage}
    % Style: markers 8:4, all percolations width 2.
    \caption{
        The (simplified version of the) observable $p_n(V)$
        is defined as the probability of seeing
        a circuit at height $\geq 1$ in the annulus at scale $n$,
        when a boundary height of $0$ is imposed on the outer boundary
        of the annulus.
        The observable measures the ability of the model
        to transition from one height to another at the macroscopic scale.
        The formal definition is more convoluted due to technical complications:
        see Section~\ref{section:main_ingredients444} and Figure~\ref{fig:observable_real}.
    }
    \label{fig:simplified_observable}
\end{figure}

\newcommand\secondcoarsegraininglemmacontents[1]{
    There is a universal constant $\cdicho>0$ with the following property.  For
    any potential $V\in\Phi$, the observables $(p_n(V))_n\subset[0,1]$ which are defined at
    each scale $n\in\Z_{\geq 1}$ satisfy, for each $n\in\Z_{\geq 1000}$, the
    equation
    \begin{equation}
        \label{#1}
        p_{20kn}(V)\leq (p_n(V)/\cdicho)^k\qquad\forall k\in\Z_{\geq 1}.
    \end{equation}
    In particular, for each potential $V\in\Phi$, either $p_n(V)\geq \cdicho$ for all
    $n\in\Z_{\geq 1000}$, or $(p_{kn}(V))_{k\geq 1}$ decays exponentially fast in $k$ for
    some fixed $n\in\Z_{\geq 1}$.
}

\begin{biglemma}[Interface coarse-graining inequality]
    \label{lemma:preface_second_coarse_graining}
    \secondcoarsegraininglemmacontents{eq:original_second_coarse_graining}
\end{biglemma}

This lemma gives rise to an alternative notion of a correlation length:
\begin{equation}
    \label{eq:alternative_correlation_length}
        \xi'(V):=\inf\{n\in\Z_{\geq 1000}: p_n(V)<\cdicho / 2\}\in\Z_{\geq 1000}\cup\{\infty\}.
\end{equation}
Definition~\ref{def:loc_deloc} partitioned the class of potentials $\Phi$ into two sets:
$\PhiLoc$ and $\PhiDeloc$.
The alternative notion of correlation length suggests another bipartition:
\[
    \PhiDecay := \{\xi'<\infty\};
    \qquad
    \PhiPositive :=
    \{\xi'=\infty\} = \{
    V\in\Phi: \inf_{n\geq 1000} p_n(V)\geq \cdicho
    \}.
\]
In our proof, we shall see that the bipartitions coincide:
\begin{equation}
    \label{eq:loc_deloc_equals_decay_positive}
    \PhiLoc = \PhiDecay
    \qquad\text{and}\qquad
    \PhiDeloc = \PhiPositive.
\end{equation}

\subsection{Percolation theory and the height functions of the BKT transition}

The analysis of the height functions of the BKT transition is complicated by two obstacles.
\begin{itemize}
    \item \textbf{Lack of combinatorial structure.}
    Many two-dimensional models in statistical mechanics
    (for example, the random-cluster model, the six-vertex model, and the loop~$\operatorname{O}(2)$ model)
    benefit from a neat combinatorial structure, which allows us to consider different representations of the model,
    duality arguments, and sometimes exact integrability.
    The height functions of the BKT transition do not seem to enjoy such a structure,
    which makes their analysis (using techniques from percolation theory) more challenging.
    \item \textbf{Failure of the Lipschitz constraint.}
    To make things worse, the gradient of our height functions is not even uniformly bounded.
    We shall later consider ``level sets at height $a\in\Z$''.
    Failure of the Lipschitz constraint implies that two level sets at different heights $a\neq a'$ can cross each other
    (violating planarity of the plane) no matter our choice of $a$ and $a'$.
    This is catastrophic from the perspective of two-dimensional percolation theory,
    where planarity is a key ingredient.
    Indeed, even if such violations of planarity are unlikely on the microscopic level,
    their rare occurence may still have an effect on the macroscopic level due to the critical nature of the model.
\end{itemize}
We get around these problems by introducing several percolations which are related in a subtle fashion.
Understanding these relations is the key to deriving the main results.
To illustrate the complexity of the situation:
the ``level lines at height one'' in the definition of the observable $p_n$ (Figure~\ref{fig:simplified_observable})
form a percolation, but this percolation is not even stochastically increasing in the domain
when boundary conditions are imposed at height zero.
Problems of this sort must be circumvented in several places.

The author considers that one contribution of this paper lies in the fact
that these obstacles are overcome, potentially
opening the door to further progress on the BKT transition using tools
from two-dimensional percolation theory.

\subsection{Note on the interface coarse-graining inequality (Lemma~\ref{lemma:preface_second_coarse_graining})}

\begin{figure}[b]
    \begin{tikzpicture}[node distance=1.5cm,->]
    \node (allphases) [model] {all phases};
    \node (subcritical) [model, above of=allphases, xshift=2.5cm, yshift=-0.7cm] {subcritical phase};
    \node (remaining) [model, right of=allphases, xshift=3.5cm] {remaining phases};
    \path[thick] (allphases) edge [bend right=17.6]  (subcritical);
    \path[thick] (allphases) edge node [below] {CG.1.P} (remaining);

    \node (supercritical) [model, above of=remaining, xshift=2.5cm, yshift=-0.7cm] {supercritical phase};
    \node (critical) [model, right of=remaining, xshift=3.5cm] {critical phases};
    \path[thick] (remaining) edge[bend right=17.6]   (supercritical);
    \path[thick] (remaining) edge node [below] {CG.1.D} (critical);

    \node (ccrit) [widemodel, below of=critical, xshift=-0.5cm, yshift=-1.2cm] {continuous critical phase};
    \path[thick] (critical) edge[bend left =5] (ccrit);

    \node (dcrit) [widemodel, below of=critical, xshift=-0.5cm, yshift=-.3cm] {discontinuous critical phase};
    \path[thick] (critical) edge [bend left=8] node [right,xshift=.1cm] {CG.2} (dcrit);

    \node (loc) [phaselight, left of=dcrit, xshift=-5cm] {localised phase};
    \node (deloc) [phasedark, left of=ccrit, xshift=-5cm] {delocalised phase};
    \path[thick] (dcrit) edge node[above]{$\cong$} (loc);
    \path[thick] (loc) edge (dcrit);
    \path[thick] (ccrit) edge node[above]{$\cong$}(deloc);
    \path[thick] (deloc) edge (ccrit);

\end{tikzpicture}
        \caption{
            The coarse-graining inequalities separate the four phases of the random-cluster model 
            through three dichotomies.
            The first coarse-graining inequality has a primal and a dual version.
            In our context, the off-critical phases can be ruled out
            via arguments specific to height functions.
            The second (interface) coarse-graining inequality
            therefore separates the two critical phases
            and describes the localisation-delocalisation
            transition for height functions.
        }
        \label{fig:diagram}
\end{figure}

It appears that the height functions in this text have two phases:
localised and delocalised.
This can be compared to the two-dimensional random-cluster model
with parameters $q\geq 1$ and $p\in[0,1]$.
That model has four phases:
subcritical, supercritical, continuous critical, and discontinuous critical.
Each phase describes different behaviour of the percolations.
In the last phase (discontinuous critical), the behaviour is highly dependant on boundary conditions.

This \emph{quadrichotomy} can be obtained by establishing three dichotomies, see~\cite{Duminil-CopinTassion_2019_RenormalizationCrossingProbabilities}
and Figure~\ref{fig:diagram}.
This article follows a similar pattern, \emph{except that the subcritical and supercritical phases can be ruled
out using ideas specific to height functions} (see Section~\ref{sec:favourable}; in particular Lemmas~\ref{lemma:cg2_new}--\ref{lemma:duality_new}).
This parallel structure suggests that we can relate the two phases of the height functions to the two critical phases of the random-cluster model.
\begin{itemize}
    \item The localised phase is analogous to the discontinuous critical phase in the random-cluster model.
    Indeed, $h$ concentrates around the height at which boundary conditions are imposed;
    different boundary heights lead to different distributions.
    \item The delocalised phase is analogous to the continuous critical phase in the random-cluster model.
    Indeed, imposing boundary conditions at height $0$ leads to similar behaviour as boundary conditions
    at height $1$, and our renormalisation inequality suggests that the mixing occurs at a polynomial rate.  
\end{itemize}

\subsection{Proof organisation}
\label{subsec:proof_organisation}

All arguments are phrased in terms of a percolation structures
related to an \emph{interpolated} version of the height function $h$.
This interpolation is much easier to handle for potentials in the smaller class
of quadratic potentials
\[
    \Psi:= \{V_\beta(a)=\beta a^2 : \beta\in (0,\infty)\}\subset \Phi.
\]
Define the sets $\PsiLoc$, $\PsiDeloc$, $\PsiDecay$, and $\PsiPositive$ by intersecting the corresponding set for $\Phi$
with $\Psi$.
We first execute all our proofs for the class $\Psi$,
then extend to the full class $\Phi$.

\begin{itemize}
    \item \textbf{Part~\ref{part:proof_main_results}} contains the proofs of Theorems~\ref{thm:sharpness_NEW},  \ref{thm:gap}, \ref{thm:deloc_at_crit}, \ref{thm:local_exp} for the class $\Psi$.
    \begin{itemize}
    \item[\emph{Section~\ref{section:lupu}}] describes the interpolation,
    and introduces the percolation structures related to the interpolated height function.
    It also rigorously states the required Fortuin--Kasteleyn--Ginibre (FKG) inequalities
    that we import from~\cite{LammersOtt_2024_DelocalisationAbsolutevalueFKGSolidonsolid}.
    \item[\emph{Section~\ref{section:main_ingredients444}}] provides a formal statement
    of the interface coarse-graining inequality (Lemma~\ref{lemma:preface_second_coarse_graining})
    and of a Russo-Seymour-Welsh (RSW) result. The latter is imported from~\cite{Kohler-SchindlerTassion_2023_CrossingProbabilitiesPlanar}.
    The proof of Lemma~\ref{lemma:preface_second_coarse_graining} is deferred to Part~\ref{part:lemma_proof} (Sections~\ref{sec:statement_of_ingredients}--\ref{sec:cg2proof}).
    \item[\emph{Section~\ref{sec:loc}}] proves that for any $V\in\PsiDecay$,
    the conclusions of Theorem~\ref{thm:sharpness_NEW} hold true.
    This implies already that $\PsiDecay\subset\PsiLoc$,
    but does not yet establish Theorem~\ref{thm:sharpness_NEW} for $\PsiLoc$.
    \item[\emph{Section~\ref{sec:deloc}}] proves that for any $V\in\PsiPositive$,
    the conclusions of Theorem~\ref{thm:gap} hold true.
    This automatically implies that $\PsiPositive\subset\PsiDeloc$.
    Together with the previous item, we obtain $\PsiDecay=\PsiLoc$
    and $\PsiPositive=\PsiDeloc$.
    This formally establishes Theorems~\ref{thm:sharpness_NEW} and~\ref{thm:gap}
    over $\Psi$.
    \item[\emph{Section~\ref{sec:continuity444}}] proves that the finite-volume
    observable $p_n(V)$ appearing in Lemma~\ref{lemma:preface_second_coarse_graining}
    is continuous in $V\in\Psi$.
    This formally establishes Theorems~\ref{thm:deloc_at_crit}--\ref{thm:local_exp}
    for the class $\Psi$.
    \end{itemize}
    \item \textbf{Part~\ref{part:lemma_proof}} contains the proof of Lemma~\ref{lemma:preface_second_coarse_graining} for the class $\Psi$.
    \begin{itemize}
        \item[\emph{Section~\ref{sec:statement_of_ingredients}}] contains the statement of the three main ingredients of the proof of Lemma~\ref{lemma:preface_second_coarse_graining}.
        \item[\emph{Sections~\ref{sec:ingred_I_symmetry}--\ref{sec:push}}] contains the proofs of those three main ingredients.
        \item[\emph{Section~\ref{sec:cg2proof}}] contains the proof of Lemma~\ref{lemma:preface_second_coarse_graining} using the main ingredients.
    \end{itemize}
    \item\textbf{Part~\ref{part:extensions}} extends the results to the whole class $\Phi$.
    \begin{itemize}
        \item[\emph{Section~\ref{section:extension_poisson_XY}}] extends our results to the dual height function of the XY model (the Poisson potential).
        In fact, the Poisson potential has the same infinite divisibility property as the quadratic potential,
        which means that all the proofs extend immediately.
        \item[\emph{Section~\ref{section:extension_general}}] explains how the results extend to all potentials in $\Phi$ by constructing an alternative for the Brownian interpolation used in Parts~\ref{part:proof_main_results}--\ref{part:lemma_proof}.
    \end{itemize}
\end{itemize}

\part{Proof of the main results}
\label{part:proof_main_results}
\section{Percolation representation of height functions}
\label{section:lupu}

We first execute the entire proof for the Gaussian potentials $V(a)=\beta a^2\in\Psi$.
The parameter $\beta$ is fixed except when clearly indicated.
Recall from Definition~\ref{def:height_functions} that
the probability of each configuration $h\in\Omega^\dscrt_\Lambda$ is proportional to
\begin{equation}
    \label{eq:hamiltonian_lupu}
\textstyle
\mu_\Lambda^\dscrt[h]\propto 
    e^{-H_\Lambda(h)}=\prod_{xy\in\E(\Lambda)} e^{-\beta (h_x-h_y)^2}.
\end{equation}

We now informally introduce the idea of \emph{Brownian interpolations};
it is worked out rigorously below.
Each factor $e^{-\beta (h_x-h_y)^2}$ is precisely the heat kernel in $\R\supset\Z$ with diffusion time $t=1/2\beta$.
Such a factor can be interpreted as the partition function of a Brownian bridge of length $t=1/2\beta$ running from $h_x$ to $h_y$.
This enables a very natural construction,
where we enrich the probability measure $\mu_\Lambda^\dscrt$ by sampling,
for each edge $xy\in\E(\Lambda)$,
an independent Brownian bridge of length $t=1/2\beta$ from $h_x$ to $h_y$.
To the best knowledge of the author,
such a construction can be traced back to the work of Berezinskii~\cite[Page~494]{Berezinskii_1971_DestructionLongrangeOrder},
and was later rediscovered in the mathematics literature by Lupu~\cite{Lupu_2016_LoopClustersRandom}.

\begin{remark}
    While this section is written in the context of the square lattice graph,
    all ideas extend readily to general graphs.
    Even the planarity of the graph does not play a role, except in the definition of dual percolations.
\end{remark}

\subsection{Brownian interpolations of height functions}

The Brownian interpolation of a height function has a \emph{continuum domain}
as its natural domain.
We define those first.
Once they are established, we give a formal definition of the sample space,
before formally defining the Brownian interpolation measure.

\begin{definition}[Continuum domains]
    \label{def:continuum_domains}
    Define $\C:=(\Z\times\R)\cup(\R\times\Z)\subset\R^2$.
    It is the \emph{cable graph} associated with the square lattice graph: the union of $\Z^2$ with all line segments connecting nearest neighbour vertices.
    A \emph{continuum domain} 
    (or simply a \emph{domain})
    is a bounded open set $\Lambda\subset\C$
    with finitely many connected components.
    We let $\partial\Lambda$ and $\bar\Lambda$ denote the topological boundary and closure of $\Lambda$ in $\C$ respectively.
    A \emph{boundary height function} is a function $\xi:\partial\Lambda\to\Z$.
    Write $\Bound$ for the set of boundary conditions $(\Lambda,\xi)$,
    and define $\BoundNonneg:=\{(\Lambda,\xi)\in\Bound:\xi\geq 0\}$.
\end{definition}

\begin{definition}[Sample space]
    For any $(\Omega,\xi)\in\Bound$, the natural sample space is
    \[
        \Omega_\Lambda
        :=
        \{h\in\R^{\bar\Lambda}:\text{$h$ is continuous and satisfies $h_x\in\Z$ for $x\in\partial\Lambda\cup(\Lambda\cap\Z^2)$}\}.
    \]
\end{definition}

\begin{definition}[Observation sets]
    We shall define a probability measure on $\Omega_\Lambda$
    by specifying its finite-dimensional marginals.
    For a convenient definition, we want that the marginals are always evaluated
    on $\partial\Lambda\cup(\Lambda\cap\Z^2)$.
    For a given domain $\Lambda$, we therefore introduce the notion of an \emph{observation set},
    which is any finite set $O\subset\bar\Lambda$ that contains the set $\partial\Lambda\cup(\Lambda\cap\Z^2)$.
    
    For a given domain $\Lambda$ and observation set $O$,
    we let $S_O$ denote the partition of $\Lambda\setminus O$ into connected components.
    Notice that all elements $s\in S_O$ are in fact open line segments of Euclidean length
    $\ell(s)\leq 1$.
    The two endpoints are denoted $s_-$ and $s_+$.
\end{definition}

\begin{definition}[Interpolated height function measure]
    Fix a boundary condition $(\Lambda,\xi)\in\Bound$.
    We define $\mu_{\Lambda}^\xi$ as the unique
    probability measure on $\Omega_\Lambda$ such that for any observation set $O$,
    the density of the random variable $h|_O$ on
    \[
        \Z^{\partial\Lambda\cup(\Lambda\cap\Z^2)}\times\R^{(O\setminus(\partial\Lambda\cup(\Lambda\cap\Z^2)))}
    \]
    with respect to the natural Haar measure (or Lebesgue measure) is proportional to
    \begin{equation}
        \label{eq:interpolation_law}
        1[h|_{\partial\Lambda}=\xi]\cdot
        \prod_{s\in S_O} e^{-\frac{\beta}{\ell(s)} (h_{s_+}-h_{s_-})^2}.
    \end{equation}
    We leave it to the reader to verify that the consistency conditions
    for Kolmogorov's extension theorem are satisfied,
    and that the interpolation is indeed Brownian on each line segment
    so that the probability measure may be realised on a sample space of continuous functions.

    We drop the superscript $\xi$ when $\xi\equiv 0$.
\end{definition}

\begin{proposition}[Consistency of the interpolated height function]
    Consider a finite subset $\Lambda\subset\Z^2$,
    and let $\Lambda'\subset\C$ denote the union of $\Lambda$ with the open unit line segments incident to $\Lambda$.
    Then
    \[
        \text{the law of $h|_{\Lambda}$ under $\mu^\dscrt_{\Lambda}$}
        \quad=\quad
        \text{the law of $h|_{\Lambda}$ under $\mu_{\Lambda'}$}
        .
    \]
\end{proposition}

\begin{proof}
    Consider the observation set $O=\Lambda\cup\partial\Lambda'$,
    and observe that Equation~\eqref{eq:interpolation_law}
    equals the law given in Definition~\ref{def:height_functions}.
\end{proof}

From now on, we shall always work in the more general setting of interpolated height functions.
We sometimes wish to study the law conditioned on events of the form $\{h|_T=\zeta\}$,
which may have zero probability.
This is done in the usual way:
via regular conditional probability distributions given by the density in Equation~\eqref{eq:interpolation_law}.
This is equivalent to conditioning on events of the form $\{|h|_T-\zeta|<\epsilon\}$,
and then letting $\epsilon\to 0$.

The rich structure of the interpolation yields some useful results for free.

\begin{theorem}
    \label{theorem:interpolation_properties}
    Let $(\Lambda,\xi)\in\Bound$.
    Fix $T\subset\Lambda$ finite and let $\zeta\in\Z^T$.
    \begin{itemize}
        \item \textbf{Intermediate value theorem.}
        Fix $a\in\R$. 
        The random sets $\{h<a\}$, $\{h>a\}$, and $\{h=a\}$,
        are open, open, and closed subsets of $\bar\Lambda$ respectively,
        and the topological boundary of $\{h<a\}$ and $\{h>a\}$ is contained in $\{h=a\}$.
        \item \textbf{Consistency.}
        We have
        \(
            \mu_{\Lambda}^\xi[\blank|\{h|_T=\zeta\}]=\mu_{\Lambda\setminus T}^{\xi\zeta}
        \), where
     $(\Lambda\setminus T,\xi\zeta)\in\Bound$ is the natural extended boundary condition.
        \item \textbf{Markov property.}
        Let $(\Lambda_i)_i$ denote the connected component decomposition of $\Lambda$,
        and let $\xi_i:=\xi|_{\partial\Lambda_i}$.
        then
        \(
            \mu_{\Lambda}^\xi=\prod_i \mu_{\Lambda_i}^{\xi_i}
        \).
        \item \textbf{Flip symmetry.}
        If $\xi\equiv a$ for $a\in\Z$, then
        \(
            h\) and \( 2a - h\)
        have the same law in $\mu_{\Lambda}^\xi$.
    \end{itemize}
\end{theorem}

\begin{proof}
    Item~1 follows from continuity of $h$;
    Items~2--4 from Equation~\eqref{eq:interpolation_law}.
\end{proof}

\subsection{Level lines, excursions, and explorations}

% This subsection introduces \emph{level lines} and \emph{excursions},
% and states some useful basic properties.
% We first define those objects as random subsets of the continuum domain $\bar\Lambda$.

\begin{definition}[Level lines and excursions]
    Consider a boundary condition $(\Lambda,\xi)\in\Bound$ and a sample (an interpolated height function) $h\in\Omega_\Lambda$.
    Let $a\in\Z$.
    \begin{itemize}
        \item The \emph{level lines} are defined as the set
        \[
            \calL_a^\cts:=\{h=a\}\subset\bar\Lambda.
        \]
        We shall also write
        \[
            \calL_{\leq a}^\cts:=\{h\leq a\};
            \qquad
            \calL_{\geq a}^\cts:=\{h\geq a\},        \]
            for the \emph{lower} and \emph{upper level lines} respectively.
        \item The \emph{excursions} $\calE^\cts_\diamond$ are defined as the complements $\bar\Lambda\setminus\calL_\diamond^\cts$ of the level lines:
        \[
            \calE^\cts_a:=\{h\neq a\};
            \qquad
            \calE^\cts_{\leq a}:=\{h>a\};
            \qquad
            \calE^\cts_{\geq a}:=\{h<a\}.
        \]
        \item The \emph{regional excursions} $\calR^\cts_\diamond(S)$ are defined as follows:
        \[
            \calR^\cts_\diamond(S):=\{x\in \calE^\cts_{\diamond}:\text{the $\calE^\cts_{\diamond}$-connected component of $x$ intersects $S$}\},
        \]
        where $S\subset\partial\Lambda\cup(\Lambda\cap\Z^2)$ is any kind of \emph{starting set}
        (this restriction guarantees that $h$ takes integer values on $S$).
        The dependency on $S$ is sometimes omitted when $S=\partial\Lambda$,
        and in those cases $\calR^\cts_\diamond$ is also called the \emph{boundary excursion}.
        \item Finally, the \emph{restricted regional excursion} $\calR^\cts_\diamond(S,E)$ is defined as:
        \[
            \calR^\cts_\diamond(S,E):=\{x\in \calE^\cts_{\diamond}:\text{the $(\calE^\cts_{\diamond}\cap E)$-conn.\ comp.\ of $x$ intersects $S$}\},
        \]
        where $S\subset\partial\Lambda\cup(\Lambda\cap\Z^2)$ is any kind of \emph{starting set},
        and $E$ is any kind of \emph{restriction set},
        which must be the intersection of $\bar\Lambda$ with a union of closed unit line segments in $\C$ with integer endpoints.
    \end{itemize}
    The superscript ``$\cts$'' signals that these objects are continuous objects (subsets of $\bar\Lambda$).
\end{definition}

These objects are a graphical representation of the height function $h$.
They can be used to geometrically understand interactions between different regions
in the domain and with the boundary.
In practice we often use the following lemma,
describing an \emph{exploration process}.
An important example of such an exploration process is given below.

\begin{lemma}[Exploration process]
    \label{lemma:exploration}
    Consider the following setup:
    \begin{itemize}
        \item Choose a boundary condition $(\Lambda,\xi)\in\Bound$, and study $\mu_{\Lambda}^\xi$,
        \item Choose a starting set $S\subset\partial\Lambda\cup(\Lambda\cap\Z^2)$ to begin the exploration from,
        \item Choose a target $\diamond\in\{\mathord{=}a,\mathord{\leq}a,\mathord{\geq}a\}$ to stop the exploration,
        \item Choose a restriction set $E\subset\bar\Lambda$ to restrict the exploration (often, $E=\bar\Lambda$).
    \end{itemize}

    Let $(\Lambda',\xi')\in\Bound$ denote the boundary condition induced by $\calR_\diamond^\cts(S,E)$,
    defined via
    \[
        \Lambda':=(\Lambda\setminus S)\setminus\bar\calR_\diamond^\cts(S,E),
        \qquad
        \xi':\partial\Lambda'\to\Z,\,x\mapsto h_x.
    \]
    Then all of the following hold true almost surely:
    \begin{itemize}
        \item Conditional on $(\Lambda',\xi')$, the law of $h|_{\bar\Lambda'}$
        is given by $\mu_{\Lambda'}^{\xi'}$,
        \item On $\partial\Lambda'$, the boundary height function $\xi'$ satisfies
        the following properties:
                \begin{equation}
        \begin{cases}
          \xi_x'  = a &  \text{if $x\in\partial\Lambda'\setminus(S\cup\partial\Lambda  \cup\partial E )$,}\\
          \xi_x' \diamond &\text{if $x\in S\setminus \partial E$,}\\
          \text{$\xi'_x=\xi_x$, which can be any integer} & \text{if $x\in\partial\Lambda$,}\\
            \text{$\xi'_x=h_x$, which can be any integer} &\text{if $x\in\partial E\setminus\partial\Lambda$.}
        \end{cases}
    \end{equation}
    \end{itemize}
\end{lemma}

\begin{proof}
    The event $\{(\Lambda',\xi')=(\Delta,\zeta)\}$ may be written
    as the intersection of
    the event 
    \(
        \{\text{$h$ equals $\zeta$ on $\partial\Delta\setminus\partial\Lambda$}\}
    \)
    with an event measurable with respect to $h|_{\bar\Lambda\setminus\bar\Delta}$.
    The first item then follows from the consistency property and the Markov property
    (Theorem~\ref{theorem:interpolation_properties}).

    The second item is not a random statement;
    it can be derived by analysing how $(\Lambda',\xi')$ is (deterministically)
    constructed from the height function and the regional exploration.
\end{proof}

\begin{figure}
  \centering
  \includegraphics{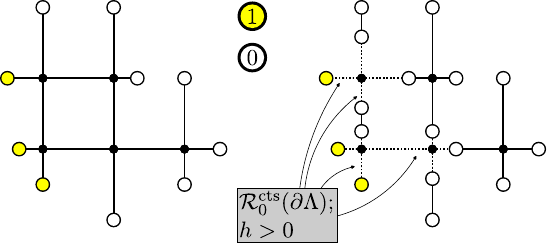}
  \caption{\textsc{Left}: A domain $\Lambda$ with boundary condition $0\leq \xi\leq 1$.
  \textsc{Right}: The exploration of $\calR_0^\cts(\partial\Lambda)$ induces a random domain $\Lambda'$
  with boundary conditions $0$.}
  \label{fig:explorationdef}
\end{figure}

For example, consider some measure $\mu_\Lambda^\xi$. Suppose that $\xi$ equals
$0$ on most of $\partial\Lambda$,
except on a small portion where it equals $1$.
Then $\calR_0^\cts(\partial\Lambda)$ is a random subset of $\bar\Lambda$
emanating from $\{\xi=1\}\subset\partial\Lambda$
(by considering this regional exploration, we implicitly set $E:=\bar\Lambda$).
Define $\Lambda':=\Lambda\setminus\bar\calR_0^\cts(\partial\Lambda)$.
Then the law of $h|_{\bar\Lambda'}$ conditional on $\calR_0^\cts(\partial\Lambda)$ is precisely equal to
$\mu_{\Lambda'}$.
One can informally interpret this as follows:
the random set $\calR_0^\cts(\partial\Lambda)$ ``feels'' $\{\xi=1\}$
 (for example, $h$ is strictly positive on this set),
while the domain $\Lambda'$ ``forgets'' or ``is shielded away''
from $\{\xi=1\}$ (for example, $h$ is flip-symmetric in $\Lambda'$).
See Figure~\ref{fig:explorationdef}.

\subsection{Discrete dual percolations associated with level lines}

Recall that $(\Z^2,\E)$ is the discrete square lattice graph.
We write $(\Z^2)^*:=(\Z+\frac12)^2$ for its dual graph's vertex set,
and let $\E^*$ denote the set of dual edges.
The map $\E\to\E^*,\,xy\mapsto xy^{*}$ is bijective.
The \emph{trace} of an edge $xy\in\E$ is the closed unit line segment in $\C$ connecting $x$ and $y$.

\begin{definition}[Discrete percolations]
    For any continuum domain $\Lambda\subset\C$,
    let $\E(\Lambda)$ denote the set of square lattice edges
    whose trace intersects $\Lambda$.
    Let $\E^*(\Lambda):=\{xy^*:xy\in\E(\Lambda)\}$.
    Introduce the following notations for $\diamond\in\{a,\mathord\leq a,\mathord\geq a\}$:
    \begin{gather}
        \calL_\diamond:=\{xy^*\in\E^*(\Lambda):\text{the trace of the dual edge $xy\in\E(\Lambda)$ intersects $\calL_\diamond^\cts$}\};
        \\
        \calE_\diamond:=\{xy\in\E(\Lambda):\text{the trace of $xy$ does not intersect $\calL_\diamond^\cts$}\};
        \\
        \calR_\diamond(S):=\{xy\in\calE_\diamond:\text{the trace of $xy$ intersects $\calR_\diamond^\cts(S)\setminus\Z^2$}\};
        \\
        \calR_\diamond(S,E):=\{xy\in\calE_\diamond:\text{the trace of $xy$ intersects $\calR_\diamond^\cts(S,E)\setminus\Z^2$}\}.
    \end{gather}
    Notice that $(\calL_\diamond,\calE_\diamond)$
    is a duality pair
    to which standard planar duality arguments in percolation theory apply.
    The percolation $\calR_\diamond(S)$ contains the connected components
    of $\calE_\diamond$ which intersect the starting set $S$.
    The percolation $\calR_\diamond(S,E)$ contains the connected components
    of $\calE_\diamond\cap E$ which intersect the starting set $S$.
\end{definition}

These objects are just discrete versions of the continuous objects defined earlier.

\subsection{Correlation inequalities}

When applying percolation theory arguments,
we shall frequently use a correlation inequality called the 
\emph{Fortuin--Kasteleyn--Ginibre inequality} (FKG)~\cite{FortuinKasteleynGinibre_1971_CorrelationInequalitiesPartially}.
Such an FKG inequality is typically associated with a partial ordering of the state
space.

\begin{definition}[Monotone functions]
    A function is called \emph{increasing} if it preserves the partial ordering,
    and \emph{decreasing} if it flips the partial ordering.
    The partial ordering is sometimes implicit:
    on sets of sets it is $\subset$; on sets of functions it is $\leq$.
\end{definition}

We will use FKG inequalities associated
with \emph{two distinct partial orderings} of our state space:
we consider the height function $h$ with its natural partial ordering,
as well as the \emph{absolute} height function $|h|$ with its natural partial ordering.
One particular random function may be written as an increasing function of \emph{both} representations.

\begin{lemma}[Twice measurability of $\calR^\cts_0(\partial\Lambda)$]
    Let $(\Lambda,\xi)\in\Bound_{\geq 0}$ and consider $\mu_{\Lambda}^\xi$.
    Since all boundary heights are positive, we get
    \(
        \calR^\cts_0(\partial\Lambda)=\calR^\cts_{\leq 0}(\partial\Lambda)
    \).
    In particular, those sets are increasing functions
    of both $h$ and $|h|$,
    and decreasing functions of both
    $\calL_0^\cts$ and $\calL_{\leq 0}^\cts$.
    The same holds true for the discrete regional excursions
    \(
        \calR_0(\partial\Lambda)=\calR_{\leq 0}(\partial\Lambda)
    \).
\end{lemma}

The first FKG inequality concerns $h$.
If $\Lambda$ is any continuum domain and $O$
any observation set, then the density in Equation~\eqref{eq:interpolation_law}
satisfies the FKG lattice condition.
The following lemma is an immediate corollary;
refer to~\cite{FortuinKasteleynGinibre_1971_CorrelationInequalitiesPartially,Sheffield_2005_RandomSurfaces,LammersTassy_2024_MacroscopicBehaviorLipschitz} for details.

\begin{lemma}[Monotonicity for heights]
    \label{lemma:monotonicity}
    Fix a domain $\Lambda$.
    \begin{itemize}
        \item \textbf{FKG inequality.}
        For any two bounded increasing functions $X,Y:\Omega_\Lambda\to\R$ and any boundary height function $\xi$, we have
        \[
            \mu_{\Lambda}^\xi[X Y]\geq \mu_{\Lambda}^\xi[X]\cdot\mu_{\Lambda}^\xi[Y].
        \]
        \item \textbf{Monotonicity in boundary height.} For any
        bounded increasing function $X:\Omega_\Lambda\to\R$
        and any two boundary height functions $\xi,\xi'$, we have
        \[
        \xi\leq\xi'\qquad\implies\qquad
            \mu_{\Lambda}^\xi[X] \leq \mu_{\Lambda}^{\xi'}[X].
        \]
    \end{itemize}
\end{lemma}

The following lemma is a consequence of the same FKG lattice condition.

\begin{lemma}[Log-concavity]
    \label{lemma:log_concavity}
    Let $(\Lambda,\xi)\in\Bound$ and $x\in\Lambda\cap\Z^2$.
    Then the density of the random variable $h_x$ under $\mu_{\Lambda}^\xi$
    with respect to the counting measure on $\Z$
    is log-concave.
\end{lemma}

\begin{proof}
    By the FKG lattice condition, we find that
    \[
        \mu_{\Lambda}^\xi[\{h_x=a\}]\mu_{\Lambda}^{\xi+1}[\{h_x=a+1\}]
        \geq
        \mu_{\Lambda}^\xi[\{h_x=a+1\}]\mu_{\Lambda}^{\xi+1}[\{h_x=a\}].
    \]
    Now shifting $\xi$ by a constant shifts the law of $h$ by the same constant.
    This leads to the desired inequality.
\end{proof}

The second FKG inequality concerns $|h|$,
and was discovered more recently in~\cite{LammersOtt_2024_DelocalisationAbsolutevalueFKGSolidonsolid}.
In fact, it is proved in~\cite{LammersOtt_2024_DelocalisationAbsolutevalueFKGSolidonsolid} that the density of $|h||_O$
satisfies the FKG lattice condition for any observation set $O$.
The following lemma is an immediate corollary.

\begin{lemma}[Monotonicity for absolute heights]
    \label{lemma:monotonicity_abs}
    Fix a domain $\Lambda$.
    \begin{itemize}
        \item \textbf{FKG inequality.}
        For any two bounded functions $X,Y:\Omega_\Lambda\to\R$ which are measurable and increasing in $|h|$ and any boundary height function $\xi\geq 0$, we have
        \[
            \mu_{\Lambda}^\xi[X Y]\geq \mu_{\Lambda}^\xi[X]\cdot\mu_{\Lambda}^\xi[Y].
        \]
        \item \textbf{Monotonicity in boundary height.} For any bounded function $X:\Omega_\Lambda\to\R$ which is measurable and increasing in $|h|$ 
        and any two boundary height functions $\xi,\xi'\geq 0$,
        we have
        \[
            \xi\leq\xi'\qquad\implies\qquad
            \mu_{\Lambda}^\xi[X]\leq \mu_{\Lambda}^{\xi'}[X].
        \]
    \end{itemize}
\end{lemma}

\begin{definition}[Partial ordering on boundary conditions]
    Let $(\Lambda,\xi),(\Lambda',\xi')\in\Bound_{\geq 0}$.
    Then we write
    $(\Lambda,\xi)\preceq (\Lambda',\xi')$ whenever all of the following hold true:
    \[
        \Lambda\subset\Lambda',
        \qquad
        \text{$\xi\equiv 0$ on $\partial\Lambda\setminus\partial\Lambda'$},
        \qquad
        \text{$\xi\leq \xi'$ on $\partial\Lambda\cap\partial\Lambda'$}.
    \]
\end{definition}

\begin{lemma}[Monotonicity in domains]
    \label{lemma:monotonicity_abs_new}
    Fix a domain $\Lambda$ and let $X$ denote a bounded increasing function of $|h||_{\Lambda}$.
    Then for any $(\Lambda,\xi),(\Lambda',\xi')\in\Bound_{\geq 0}$,
    we have
    \[
        (\Lambda,\xi)\preceq (\Lambda',\xi')
        \qquad
        \implies
        \qquad
        \mu_{\Lambda}^\xi[X] \leq \mu_{\Lambda'}^{\xi'}[X].
    \]
\end{lemma}

\begin{proof}
    Define the boundary height function
    \[\zeta:\partial\Lambda'\to\Z,\,x\mapsto 1[x\in\partial\Lambda]\cdot \xi_x.\]
    We claim that
    \[
    \mu_{\Lambda'}^{\xi'}[X]\geq
    \mu_{\Lambda'}^\zeta[X]\geq
    \mu_{\Lambda'}^\zeta[X|\{h|_{\partial\Lambda\setminus\partial\Lambda'}=0\}]
    =
        \mu_{\Lambda}^\xi[X]  .
    \]
    The inequalities are Lemma~\ref{lemma:monotonicity_abs}
    (monotonicity in boundary heights and the FKG inequality respectively).
    The equality is the consistency property and the Markov property.
\end{proof}

The FKG lattice condition can also be used to prove the FKG inequality in measures
conditioned on events that are also distributive sublattices.
This leads to the following useful lemma.

\begin{lemma}
    \label{lemma:monotonicity_abs_conditioned}
    Let $(\Lambda,\xi)\in\Bound_{\geq 0}$ and consider
    a smaller domain $\Lambda'\subset\Lambda$.
    Let $A$ denote any event measurable with respect to the restriction of $|h||_{\bar\Lambda\setminus\bar\Lambda'}$.
    Then, for any bounded increasing function $X$ of $|h||_{\Lambda'}$,
    we have
    \[
        \mu_{\Lambda'}[X]
        \leq
        \mu_{\Lambda}^\xi[X|A]
        .
    \]
\end{lemma}

\begin{proof}
    Without loss of generality, the event $A$ is of the form
    \[
        A=\{|h||_{\bar\Lambda\setminus\bar\Lambda'}=\zeta\}.
    \] 
    By the FKG lattice condition,
    we get
    \[
        \mu_{\Lambda'}[X]
        =
        \mu_{\Lambda}\big[X\big|\{|h||_{\bar\Lambda\setminus\bar\Lambda'}\equiv 0\}\big]
        \leq 
        \mu_{\Lambda}^\xi\big[X\big|\{|h||_{\bar\Lambda\setminus\bar\Lambda'}=\zeta\}\big].
    \]
    This is the desired inequality.
\end{proof}

\section{Formal statements of interface coarse-graining and RSW}
\label{section:main_ingredients444}

The main results in this work are driven by two intermediate results:
an \emph{interface coarse-graining inequality},
and a \emph{Russo--Seymour--Welsh theory}.
This section contains the formal statements.
The interface coarse-graining inequality is proved in  Part~\ref{part:lemma_proof} (Sections~\ref{sec:statement_of_ingredients}--\ref{sec:cg2proof});
the Russo--Seymour--Welsh theory that we rely on was developed in~\cite{Kohler-SchindlerTassion_2023_CrossingProbabilitiesPlanar}. 

\subsection{The interface coarse-graining inequality}
We first introduce the observable in the interface coarse-graining inequality. 
From now on, use the shorthand notations
$\Lambda_n := ((-n,n)\times (-n,n))\cap\C$ and
$\Lambda_{w,h}:=((-w,w)\times(-h,h))\cap\C$.

\begin{definition}[Circuit observable, cf.~Figure~\ref{fig:observable_real}]
    \label{def:circuit_observable}
        Let $\CIRCUIT(\calX,n)$ denote the \emph{circuit event},
        that is, the event that $[-2n,2n]^2\setminus [-n,n] ^2$
    contains a non-contractible $\calX$-circuit.
    The \emph{circuit observable} at scale $n$ is defined via
    \[
        p_n:=\sup_{\Lambda_n\subset \Lambda\subset \Lambda_{2n}}\mu_{\Lambda}
        [\CIRCUIT(\calL_1,n)].
    \]

    The circuit event is realised by a circuit at height $1$,
    while boundary conditions are imposed at height $0$.
    The event in the observable is thus \emph{not} a monotone function of
    $|h|$, and therefore we cannot use monotonicity in domains (Lemma~\ref{lemma:monotonicity_abs_new})
    to easily find a domain $\Lambda$ which realises the supremum.
\end{definition}

\begin{figure}
        \begin{minipage}{0.4\textwidth}
        \centering
    \includegraphics{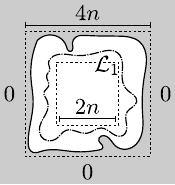}
    \end{minipage}%
    \begin{minipage}{0.45\textwidth}
        \small
        \textsc{Legend.}
        \begin{itemize}[leftmargin=*]
            \item \textbf{White area:} the domain $\Lambda$,
            \item \textbf{Fat lines:} height zero level lines,
            \item \textbf{Fat dashed lines:} height one level lines.
        \end{itemize}
    \end{minipage}
    \caption{
    Illustration of Definition~\ref{def:circuit_observable}.
    The formal definition of the observable is consistent with Figure~\ref{fig:simplified_observable},
    except that we take a \emph{supremum} over domains $\Lambda$.}
    \label{fig:observable_real}
\end{figure}

\begin{lemma*}[Interface coarse-graining inequality]
    \secondcoarsegraininglemmacontents{}
\end{lemma*}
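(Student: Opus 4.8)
The plan is to deduce the inequality $p_{20kn}(V)\le(p_n(V)/\cdicho)^k$ by the same ``fitting small annuli into a large annulus'' mechanism as in Duminil-Copin--Sidoravicius--Tassion, but fitting $k$ translated copies in a row rather than just two, so that exponential decay falls out immediately. The core idea: a non-contractible $\calL_1$-circuit in the large annulus $\bb{40kn}^2\setminus\bb{20kn}^2$ must, when we travel horizontally across the middle band, pass through $k$ disjoint ``slots,'' and in each slot it is forced to produce an $\calL_1$-circuit through a small (scale-$n$) annulus. Formally, I would first fix an arbitrary boundary circuit $(\Lambda,\tau)\in\Geom_{80kn,160kn}$ realising (nearly) the supremum in the definition of $p_{20kn}$, and work inside $\mu_{\Lambda,\tau,0}$. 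The event $\calA^*(\calL_1,20kn)$ should be shown to be contained in $\calA^*_{k,m}(\calL_1,n)$ for a suitable horizontal spacing $m\asymp n$ (the factor $20$ matching the definition $\calA^*_{k,m}(\calY,n)=\cap_{\ell=0}^{k-1}(\calA^*(\calY,n)+(20\ell m,0))$); this is a purely topological/geometric claim about how a big circuit intersects a horizontal row of small annular windows.

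Next I would estimate $\mu_{\Lambda,\tau,0}(\calA^*_{k,m}(\calL_1,n))$ from above by a product of $k$ factors of $p_n$. The obstruction here is that $\calL_1$ is \emph{not} monotone (neither increasing in $h$ nor in $|h|$), so the naive FKG/Harris route is unavailable — this is exactly why $p_n$ is defined as a supremum over boundary conditions. The remedy is to explore the circuits one slot at a time, conditioning on the outermost $\calL_1$-circuit found in each small annulus and using the Markov property (Theorem~\ref{thm_markov_flip}) together with flip symmetry: once a $\calL_1$-circuit at height $1$ has been revealed around a small box, the conditional law inside is that of a height-function model whose induced boundary condition lies in $\Geom_{4n,8n}$ (after possibly enlarging the explored region and invoking monotonicity in domains, Lemma~\ref{lem_mono_domains}, and Remark~\ref{rem:levelLinesCond} to transfer level lines). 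This reduces the conditional probability of the next slot's annulus event to something bounded by $p_n$. Iterating over the $k$ slots yields $\mu_{\Lambda,\tau,0}(\calA^*(\calL_1,20kn))\le C^{k} p_n(V)^k$ for a universal $C$, where the constant $C$ absorbs the losses from domain enlargement and from the RSW/gluing steps (Theorem~\ref{thm:generic_RSW} and the viaduct lemma, Lemma~\ref{lem:arbhor}, to patch the explored windows back into a usable domain); setting $\cdicho:=1/C$ gives the claimed bound after taking the supremum over $(\Lambda,\tau)$ on the left.

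The main obstacle I anticipate is precisely this non-monotonicity bookkeeping: arranging the exploration so that (i) each revealed $\calL_1$-circuit genuinely decouples the inside from the outside (which needs the circuit to be at height exactly $1$ and the induced measure to have $0/1$-valued boundary height so that $\calL_{\le 1}^\dagger$ and the flip symmetry are available — cf.\ Remark~\ref{remark:expansionCoarse} and Lemma~\ref{lemma:invasion_twice}), and (ii) the leftover domain after each exploration step still dominates a domain in $\Geom_{4n,8n}$ so that the factor $p_n$ applies. Concretely I would process slots from the outside in (or left to right along the row), and at each step use the viaduct lemma to guarantee, with a universal-probability cost, a zero-height circuit separating the current small annulus from the already-explored region, so that the remaining configuration is genuinely a fresh height-function model on a domain of the right aspect. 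The remaining ``in particular'' clause is then immediate: if $p_n(V)\ge\cdicho$ fails for some $n\ge1000$, then for that $n$ the inequality with $k$ ranging over $\Z_{\ge1}$ gives $p_{20kn}(V)\le(p_n(V)/\cdicho)^k\to0$ geometrically, hence $(p_{kn}(V))_{k\ge1}$ decays exponentially along the subsequence $20kn$, and monotonicity-type comparisons between nearby scales upgrade this to exponential decay of $(p_{kn}(V))_k$ for a fixed $n$.
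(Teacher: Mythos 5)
There is a genuine gap, and it is in the very first step. You claim that $\calA^*(\calL_1,20kn)$ is \emph{contained} in $\calA^*_{k,n}(\calL_1,n)$ by a ``purely topological/geometric'' argument about how a big circuit must hit a row of small annular windows. This containment is false. The large annulus is $\bb{40kn}^2\setminus\bb{20kn}^2$, while the row of small annuli sits at $x$-coordinates up to $20(k-1)n+2n<20kn$ with $|y|\le 2n$, i.e.\ entirely inside the hole $\bb{20kn}^2$ of the large annulus; the big circuit and the small annuli live in disjoint regions, so one event cannot imply the other. Even if the small annuli were placed so as to meet the large annulus, a non-contractible circuit could avoid any given local window — there is no deterministic forcing. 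What the paper proves instead of the spurious inclusion is a conditional probability bound (equation~\eqref{renorm_claim_1}): $\mu_{\Lambda,\tau,0}(\calA^*_{k,n}(\calL_1,n)\mid\calA^*(\calL_1,20kn))\geq c_A^k$, obtained by switching the reference heights $0\leftrightarrow 1$, exploring the outermost circuit, invoking domain monotonicity, and then applying the viaduct lemma (Lemma~\ref{lem:arbhor}) and net lemma (Lemma~\ref{lemma_net}). The overall scheme is a sandwich: the two-sided comparison $p_{20kn}\,c_A^k c_B^k\le\mu(\calA^*_{k,n}(\calL_1,n)\cap\calA^*_{k,n}(\calL_0,4n))\le p_n^k$, not a chain of inclusions.

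Your second stage (exploring slot by slot and invoking the Markov property) is closer to the correct mechanism, but there is a mismatch in the boundary heights. You say that exploring the outermost $\calL_1$-circuit around a slot yields a conditional measure whose boundary condition lies in $\Geom_{4n,8n}$, to which $p_n$ applies. That is not quite right: $p_n$ is defined for boundary height $\xi\equiv 0$, whereas an $\calL_1$-circuit imposes $\xi\equiv 1$. The paper handles this by interposing, inside each slot, an $\calL_0$-circuit at scale $4n$ (this is the event $\calA^*_{k,n}(\calL_0,4n)$, with conditional lower bound $c_B^k$ via~\eqref{renorm_claim_2}); it is the $\calL_0$-circuit that resets the boundary to height zero so that the definition of $p_n$ is applicable for the $\calL_1$-circuit at scale $n$ inside (equation~\eqref{renorm_claim_3}). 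You do gesture at needing a ``zero-height circuit separating the current small annulus from the already-explored region,'' which is the right instinct, but the first step's false containment would have to be replaced by a probabilistic estimate, and the role of the intermediate $\calL_0$-circuits would have to be made structural rather than a patch. Your concluding ``in particular'' remark is fine and in fact needs no monotonicity comparison: the inequality $p_{20kn}\le(p_n/\cdicho)^k$ directly gives exponential decay of $(p_{k\tilde n})_k$ for the fixed scale $\tilde n:=20n$.
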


Let us reflect for a moment on the statement in the lemma.
Consider $p_n$, and suppose that the event $\CIRCUIT(\calL_1,n)$
occurs in the probability measure $\mu_{\Lambda}$.
To go from a boundary condition at height $0$ to a level line at height $1$,
the Brownian interpolations must ``cross the gap''.
This leads to an \emph{interface}:
the $\calL_1$-circuit contributing to $\CIRCUIT(\calL_1,n)$
must in fact be surrounded by an $\calI_{01}:=\calL_0\cap\calL_1$-circuit.
If $\Lambda\subset\Lambda_{2n}$, then the event $\calC(\calI_{01},n)$ occurs as well.
This phenomenon is specific to integer-valued height functions,
since the integer conditioning at the vertices forces the gap between $0$ and $1$ to be crossed within single
edges.

Intuitively, one expects that the likelihood of seeing the interface $\calI_{01}$ is:
\begin{itemize}
    \item Exponentially small in the annulus size, in the localised phase,
    \item A function of the annulus conformal radius, in the delocalised phase.
\end{itemize}
The above lemma is consistent with this intuitive understanding of the two phases.
\begin{itemize}
    \item In the localised phase, the interface required to realise the observable 
    $p_{20kn}$ is much longer than $k$ times the interface required to realise
    the observable $p_n$.
    Thus, naturally this probability is smaller (up to the constants $\cdicho$).
    \item In the delocalised phase, the conformal radius of the annulus
    is constant in $n$.
    We naturally expect that $p_n$ converges as $n\to\infty$.
    This is consistent with the lemma whenever $p_n\geq \cdicho$ for all $n$.
\end{itemize}

\subsection{Russo--Seymour--Welsh theory}
\label{subsec:RSW}
Russo--Seymour--Welsh (RSW) theory is concerned with the regularity properties
of planar percolation.
Traditionally, the core objective of the theory is to prove that if rectangles can be crossed in the
``easy'' direction (connecting the long sides with a short path) with a sufficiently high probability,
then the same holds true in the ``difficult'' direction (connecting the short sides with a long path).
Köhler-Schindler and Tassion realised such a theory in a general setting in~\cite{Kohler-SchindlerTassion_2023_CrossingProbabilitiesPlanar}.
The purpose of the current section is to explain how that article (in particular
\cite[Theorem~2]{Kohler-SchindlerTassion_2023_CrossingProbabilitiesPlanar})
applies to our setting.

In this article, we do not work with ``difficult'' and ``easy'' rectangle crossings,
but rather with \emph{circuits} and \emph{arms} in annuli respectively.
Circuits were already introduced in Definition~\ref{def:circuit_observable};
we now introduce arms.

\newcommand\ARM{\mathcal{A}}

\begin{definition}[Arms]
    \label{def:arms}
    Let $\ARM(\calX,n)$ denote the \emph{arm event},
    that is, the event that $[-2n,2n]^2\setminus [-n,n] ^2$
    contains an $\calX$-path connecting the inner boundary and the outer boundary of the annulus.
\end{definition}

Arm should be thought of as ``easy rectangle crossings'',
and circuits should be thought of as ``difficult rectangle crossings''.
Indeed, the arm event is contained in the union of four easy rectangle crossings,
and the circuit events contains the intersection of four difficult rectangle crossings.
We state the main result of~\cite{Kohler-SchindlerTassion_2023_CrossingProbabilitiesPlanar} directly in
the setting of arms and circuits, as this is more natural in our context.

\begin{theorem}[{\cite[Theorem~2]{Kohler-SchindlerTassion_2023_CrossingProbabilitiesPlanar}}]
    \label{thm:generic_RSW}
    There exists a universal constant $N\in\Z_{\geq 1}$ with the following properties.
    Let
    \[
        \psiRSW{N}:[0,1]\to[0,1],\,x\mapsto (1-\sqrt[N]{1-x})^N.
    \]
    Let $\calL,\calE\subset\R^2$ denote any percolation structures defined in $(\mu_\Lambda)_\Lambda$ satisfying the following properties:
    \begin{itemize}
        \item\textbf{Symmetry:} If $\Sigma$ is any symmetry of $\C$, then following laws are the same:
        \[  
        \text{the laws $\Sigma\calL \sim \mu_{\Lambda}$ and $\calL \sim \mu_{\Sigma\Lambda}$,}
        \qquad\text{and}\qquad
        \text{the laws $\Sigma\calE \sim \mu_{\Lambda}$ and $\calE \sim \mu_{\Sigma\Lambda}$.}
        \]
        \item\textbf{Monotonicity:}
        The law of $\calL$ in $\mu_{\Lambda}$ is stochastically decreasing in $\Lambda$,
        and the law of $\calE$ in $\mu_{\Lambda}$ is stochastically increasing in $\Lambda$.
    \end{itemize}
    Then all of the following hold true for any $n,m\in\Z_{\geq 0}$, $k\in\Z_{\geq 1}$, and $s\in\{1,\ldots,k-1\}$,
    \begin{enumerate}
        \item $\mu_{\Lambda_{4n}}[\calC(\calL,n)]
        \geq
        \psiRSW{M}\big(\mu_{\Lambda_{40n}}[\calA(\calL,20n)]\big)$,
        \item $\mu_{\Lambda_{2^{k+2}m}}[
            \calC(\calE,2^{k-s-1}m)
        ]\geq\psiRSW{}(\mu_{\Lambda}[
            \calA(\calE,2^{k-s-1}m)
        ])$ for any $\Lambda\subset\Lambda_{2^km}$,
        \item $\mu_{\Lambda_{8n}}[\calC(\calL_0,n)]\geq \psiRSW{}(\mu_{\Lambda_{20n}}[\calA(\calL_0,n)] )$.
    \end{enumerate}
\end{theorem}

\begin{remark}
    \begin{itemize}
        \item The proof of \cite[Theorem~2]{Kohler-SchindlerTassion_2023_CrossingProbabilitiesPlanar} relies on many applications of the
        FKG inequality and the \emph{square root trick}.
        The square root trick asserts that if $A_1,\dots,A_n$ are increasing events,
        and $\P$ satisfies the FKG inequality, then
        \[
            \max\{\P(A_1),\dots,\P(A_n)\}
            \geq
            1-\sqrt[n]{1-\P(A_1\cup\dots\cup A_n)}.
        \]
        This is why $\psiRSW{N}$ takes the form specified in Theorem~\ref{thm:generic_RSW}.
        \item We choose for circuits and arms
        rather than difficult and easy rectangle crossings (this is more natural in our context).
        Arms lead to easy rectangle crossings via the square root trick (the arm event is contained in the union of a few easy rectangle crossings).
        Difficult rectangle crossings lead to circuits 
        via the FKG inequality (the circuit event contains the intersection of a few difficult rectangle crossing events).
    \end{itemize}
\end{remark}

\begin{proof}[Proof of Theorem~\ref{thm:generic_RSW}]
    Each of the three statements in the theorem is proved in the same way.
    Each statement has two measures, one in a larger domain and one in a smaller domain.
    Since the percolations $\calL$ and $\calE$ are monotone in the domain and behave covariantly under
    symmetries, it is easy to see that the measures satisfy the stochastic domination relations of
    \cite[Theorem~2 (Subsection~6.1)]{Kohler-SchindlerTassion_2023_CrossingProbabilitiesPlanar}.
    The ideas in that paper then apply \emph{mutatis mutandis} to yield the desired inequalities
    (whose form is tailored to our very specific setup).
\end{proof}
\section{Sharpness in the localised regime (Theorem~\ref{thm:sharpness_NEW})}
\label{sec:loc}

In this section, we assume that Lemma~\ref{lemma:preface_second_coarse_graining} holds.
Recall from Subsection~\ref{subsec:proof_organisation} that
our objective is to prove the properties in Theorem~\ref{thm:sharpness_NEW} for all potentials $V\in\PsiDecay$.
Together with the next section, this implies Equation~\eqref{eq:loc_deloc_equals_decay_positive}
(for $\Phi$ replaced with $\Psi$) and establishes
Theorems~\ref{thm:sharpness_NEW} and~\ref{thm:gap} over the class $\Psi$.

We first state an auxiliary lemma.

\begin{lemma}
    \label{lemma:cov}
    Let $\Lambda\subset\C$ be a domain
    and $\alpha$ and $\alpha'$ two 
    probability measures on $\Lambda\cap\Z^2$,
    so that $\alpha(h)$ and $\alpha'(h)$ are random variables.
    Then
    \[
        \Cov_{\mu_{\Lambda}}[\alpha(h);\alpha'(h)]
        =
        \int
        \mu_{\Lambda}
        \big[
            1\{
                \connect{x}{\calE_0}{y}
            \}
            \cdot
            |h_x|
            \cdot
            |h_y|
        \big]
        \diff(\alpha\times\alpha')(x,y).
    \]
    Similarly we have, for any vertices $x,y\in\Lambda\cap\Z^2$,
    \[
        \Cov_{\mu_{\Lambda}}[\Sign{h_x};\Sign{h_y}]
        =
        \mu_{\Lambda}
        \big[\{
                \connect{x}{\calE_0}{y}
            \}
        \big]
    \]
    Since the law of $(|h|,\calE_0)$
    in $\mu_{\Lambda}$
    is increasing in $\Lambda$,
    so are those covariances.
    In particular, they converge (to some number in $[0,\infty]$)
    as $\Lambda\uparrow\C$.
\end{lemma}

\begin{proof}
    The identities follow from flip symmetry (Theorem~\ref{theorem:interpolation_properties}).
    The law of $(|h|,\calE_0)$ is increasing in $\Lambda$ by monotonicity in domains
    (Lemma~\ref{lemma:monotonicity_abs_new}).
\end{proof}

\begin{figure}
  \centering
  \includegraphics{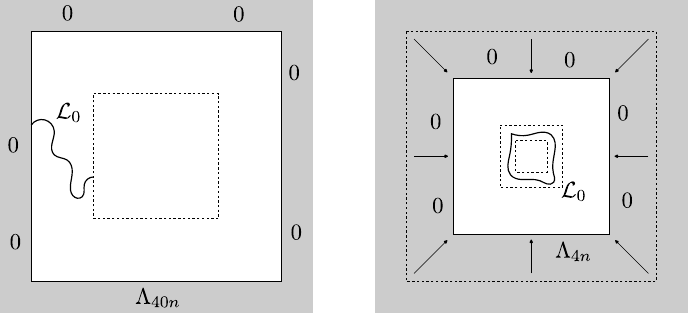}
  \caption{In the key step for sharpness (Lemma~\ref{lemma:sharpness_key_step}),
  we turn the upper bound on $\calL_{\geq 1}$-circuits into a lower bound on $\calL_0$-circuits.
  It uses the RSW theory; we may turn $\calL_0$-arms into $\calL_0$-circuits by
  shrinking the domain and using monotonicity in domains.}
  \label{fig:sharpness}
\end{figure}

The following lemma
turns Lemma~\ref{lemma:preface_second_coarse_graining} into a lower bound on $\calL_0$-circuits.

\begin{lemma}[Key step for sharpness, cf.~Figure~\ref{fig:sharpness}]
    \label{lemma:sharpness_key_step}
    There exists a universal constant $N\in\Z_{\geq 1}$ such that for
    any $n\in\Z_{\geq 1000}$,
    \[
        \mu_{\Lambda_{4n}}[\calC(\calL_0,n)]
        \geq
        1-N\cdot\sqrt[N]{p_{20n}}.
    \]
\end{lemma}

\begin{proof}
    By Theorem~\ref{theorem:interpolation_properties} (the intermediate value theorem) and the definition of $p_{20n}$,
    \[
        \mu_{\Lambda_{40n}}[ \calC(\calL_{\geq 1},20n)]
        =
        \mu_{\Lambda_{40n}}[ \calC(\calL_1,20n)]\leq p_{20n}.
    \]
    If neither $\calC(\calL_{\geq 1},20n)$ nor $\calC(\calL_{\leq -1},20n)$ occurs,
    then the event $\calA(\calL_0,20n)$ must occur, and
    \[
        \mu_{\Lambda_{40n}}[\calA(\calL_0,20n)]\geq 1-2p_{20n}.
    \]
    By Theorem~\ref{thm:generic_RSW}, Statement~1, 
    \[
        \mu_{\Lambda_{4n}}[\calC(\calL_0,n)]
        \geq
        \psiRSW{M}\big(\mu_{\Lambda_{40n}}[\calA(\calL_0,20n)]\big)
        \geq
        \psiRSW{M}(1-2p_{20n})
        .
    \]
    By analysing $\psiRSW{M}$ it is now easy to find the desired constant $N=N(\psiRSW{})$.
\end{proof}

\begin{lemma}
    Recall the constant $N$ from the previous lemma.
    For any $n\in\Z_{\geq 1000}$,
    \[
        \inf_{k\in\Z_{\geq 0}}
        \mu_{\Lambda_{4\cdot 2^kn}}[\calC(\calL_0,n)]
        \geq
        1-2N\cdot\sqrt[N]{p_n/\cdicho}.
    \]
\end{lemma}

\begin{proof}
    If $\sqrt[N]{p_n/\cdicho}\geq 1/2$ then the bound is trivial;
    we focus on the remaining case.
    Fix $k$.
    We first claim that, for any $\ell = 0,\ldots,k$,
    \[
        \mu_{\Lambda_{4\cdot 2^kn}}\big[
            \calC(\calL_0,2^\ell n)
        \big|
            \calC(\calL_0,2^{\ell+1} n)
            \cap \cdots \cap \calC(\calL_0,2^k n)
        \big]
        \geq 1-N\cdot \sqrt[N]{p_{20\cdot 2^\ell n}}.
    \]
    To see that the claim is true,
    first explore the outermost circuit contributing to
    the conditioning event $\calC(\calL_0,2^{\ell+1} n)$.
    Let $\Lambda_{2^{\ell+1} n}\subset\Lambda'\subset\Lambda_{2^{\ell+2} n}$ be the domain inside this circuit.
    Conditional on this exploration, the law inside the circuit
    is $\mu_{\Lambda'}$, independently of the outside.
    Thus, it suffices to prove that
    \[
        \mu_{\Lambda'}[\calC(\calL_0,2^\ell n)]
        \geq
        1-N\cdot \sqrt[N]{p_{20\cdot 2^\ell n}}.
    \]
    This is easy, since monotonicity in domains yields
    \[
        \mu_{\Lambda'}[\calC(\calL_0,2^\ell n)]
        \geq 
        \mu_{\Lambda_{2^{\ell+2} n}}[\calC(\calL_0,2^\ell n)],
    \]
    so that the previous lemma implies the claim.

    The claim implies that
    \[
        \mu_{\Lambda_{4\cdot 2^kn}}[\calC(\calL_0,n)]
        \geq
        1-N\sum_{\ell=0}^k
        \sqrt[N]{p_{20\cdot 2^\ell n}}
        \geq
        1-N\sum_{\ell=0}^\infty
        \sqrt[N]{p_{20\cdot 2^\ell n}}
        ,
    \]
    where the lower bound on the right is independent of $k$.
    Lemma~\ref{lemma:preface_second_coarse_graining}
    yields
    \[
        \mu_{\Lambda_{4\cdot 2^kn}}[\calC(\calL_0,n)]
        \geq
        1-N\sum_{\ell=0}^\infty
        (p_n/\cdicho)^{2^\ell/N}.
    \]
    This may of course be lower bounded by
    \[
        1-N\sum_{\ell=1}^\infty
        (p_n/\cdicho)^{\ell/N}
        =
        1-N\cdot\frac{\sqrt[N]{p_n/\cdicho}}{1-\sqrt[N]{p_n/\cdicho}}.
    \]
    The lemma now follows because $\sqrt[N]{p_n/\cdicho}\leq 1/2$.
\end{proof}

\begin{lemma}
    \label{lemma:annulus_bound_localised}
    There exist universal constants $C<\infty$ and $c>0$
    with the following properties.
    Recall the definition of the alternative correlation length $\xi'$
    from Equation~\eqref{eq:alternative_correlation_length}.
    If $V\in\PsiDecay$, then for any $n\in\Z_{\geq 1}$, we have
    \[
        \lim_{\Lambda\uparrow\C}
        \mu_{\Lambda}[\calC(\calL_0,n\xi')]
        \geq
        1-C e^{-cn}.
    \]
\end{lemma}

\begin{proof}
    This follows from the previous lemma and Lemma~\ref{lemma:preface_second_coarse_graining}.
\end{proof}

\begin{lemma}
    \label{lemma:explicit_localised_annulus_bound}
    If $V\in\PsiDecay$, then the family $(\mu_\Lambda)_\Lambda$ converges
    to some limit measure $\mu$ on height functions as $\Lambda\uparrow\C$
    in the local convergence topology.
    This limit is ergodic and extremal,
    and $\mu[\{|h_x|\geq \lambda\}]$ decays exponentially fast in $\lambda$.
    In particular, $\PsiDecay\subset\PhiLoc$.

    Finally, in $\mu$, we have, for any $n\in\Z_{\geq 1000}$,
    \begin{equation}
        \label{eq:localised_annulus_bound}
        \mu[\calC(\calL_0,n\xi')]
        \geq
        1-C e^{-cn}.
    \end{equation}
\end{lemma}

\begin{proof}
    The law of $h_0$ in $\mu_{\Lambda}$ is log-concave and symmetric around zero.
    Thus, either the probability of $\{h_0=0\}$ goes to zero,
    or it remains uniformly positive as $\Lambda\uparrow\infty$.
    In the former case the law of $h_0$ is not tight and the variance blows up (delocalised phase);
    in the latter case the law of $h_0$ is tight and the variance is uniformly bounded (localised phase).

    Now fix $V\in\PsiDecay$.
    Then the probability of $\calC(\calL_0,n)$ remains uniformly positive for large enough $n$,
    by the previous lemma.
    This means that the probability of $\{h_0=0\}$ also remains uniformly positive:
    we are in the localised phase.
    For the existence of the limit measure $\mu$,
    we argue as in~\cite{LammersOtt_2024_DelocalisationAbsolutevalueFKGSolidonsolid}.
    The law of $|h|$ converges because of monotonicity in domains
    and tightness.
    The law of $h$ given $|h|$ is simply sampled by flipping independent fair coins
    for the signs of the connected components of $\calE_0^\cts=\{h\neq 0\}$.
    Exponential decay of $\mu[\{|h_x|\geq \lambda\}]$
    follows from log-concavity of the law of $h_x$, which is preserved in the local convergence limit.

    As in~\cite{LammersOtt_2024_DelocalisationAbsolutevalueFKGSolidonsolid}, the law of $|h|$ is ergodic and extremal,
    and the law of $h$ is ergodic and extremal if we can prove that
    $\calE_0$ has no infinite connected component $\mu$-almost surely.
    Equation~\eqref{eq:localised_annulus_bound} (which follows from the previous lemma; the bound is preserved in the local convergence limit),
    implies that the event
    \[
        \left\{
            \sum_{n=1}^\infty
            1_{\calC(\calL_0,n)}
            =\infty
        \right\}
    \]
    occurs $\mu$-almost surely, which leads to non-percolation of $\calE_0$.
\end{proof}

We can now finish the proof of Theorem~\ref{thm:sharpness_NEW} for $\PsiDecay$. 

\begin{proof}[Proof of Theorem~\ref{thm:sharpness_NEW} with $\PsiDecay$]
    We want to prove Equations~\eqref{eq:norm1} and~\eqref{eq:norm2},
    the rest was done in the previous lemma.
    We first focus on~\eqref{eq:norm2}.
    Since $\calE_0$ connects vertices whose heights have the same sign,
    we get
    \[
        \Cov_{\mu^\dscrt}[\Sign{h_x};\Sign{h_y}]=\mu\big[\{\connect{x}{\calE_0}{y}\}\big]
        \qquad
        \forall x\neq y.
    \]
    The FKG inequality implies the triangular inequality for
    \[
        m:\Z^2\times\Z^2,\,(x,y)\mapsto-\log\mu\big[\{\connect{x}{\calE_0}{y}\}\big].
    \]
    The existence of the norm $\|\cdot\|_V$ for~\eqref{eq:norm2}
    therefore follows from a standard subadditivity argument
    as soon as the covariance decays exponentially fast in $\|y-x\|_2$.
    This exponential decay follows from Lemma~\ref{lemma:annulus_bound_localised}.

    Now focus on Equation~\eqref{eq:norm1}.
    Clearly $\Cov_{\mu^\dscrt}[h_x;h_y]\geq\Cov_{\mu^\dscrt}[\Sign{h_x};\Sign{h_y}]$.
    To prove~\eqref{eq:norm1} with the same norm $\|\cdot\|_V$,
    it suffices to prove that
    \begin{equation}
        \label{eq:covratio}
        \frac{\Cov_{\mu^\dscrt}[h_x;h_y]}{\Cov_{\mu^\dscrt}[\Sign{h_x};\Sign{h_y}]} 
            =
            \mu\big[
                |h_xh_y|
            \big|
                \{\connect{x}{\calE_0}{y}\}
            \big]
            \leq  e^{o(\|y-x\|_2)}
    \end{equation}
    as $\|y-x\|_2\to\infty$.
    Observe that:
    \begin{enumerate}
        \item We have already established~\eqref{eq:norm2},
        \item The law of $h_x$ in $\mu$ does not depend on $x\in\Z^2$,
        \item The probability $\mu[\{|h_x|>\lambda\}]$ decays exponentially fast in $\lambda$ (by log-concavity). 
    \end{enumerate}
    Equation~\eqref{eq:covratio} follows from these observations
    and a simple calculation for the worst case scenario where
    the random variables $|h_xh_y|$
    and $1\{\connect{x}{\calE_0}{y} \}$
    are maximally correlated.
\end{proof}

\begin{remark}
    \label{remark:correlation_lengths_comparison}
    In fact, the above proof implies that the two notions of correlation length are related via $\xi \leq 2\xi'/c$
    where $c$ is the constant from Lemma~\ref{lemma:annulus_bound_localised}.
\end{remark}

\section{Temperature gap in the delocalised regime (Theorem~\ref{thm:gap})}
\label{sec:deloc}

This section derives Lemma~\ref{lem:deloclem}.
Recall from Subsection~\ref{subsec:proof_organisation} that this implies
Equation~\eqref{eq:loc_deloc_equals_decay_positive}
(for $\Phi$ replaced with $\Psi$) and establishes
Theorems~\ref{thm:sharpness_NEW} and~\ref{thm:gap} over the class $\Psi$.

\begin{lemma}
    \label{lem:deloclem}
    Suppose that $V\in\PsiPositive$.
    Let $n\geq 2000$ and $1\leq m\leq n/2$.
    Let $\alpha$ denote a probability measure on $\Lambda_m\cap\Z^2$
    so that $\alpha(h)$ is a random variable.
    Then
    \[
        \Var_{\mu_{\Lambda_n}}[\alpha(h)]\geq \ceff  \times\log\frac{n}{m}  
    \]
    for some universal constant $\ceff>0$.
\end{lemma}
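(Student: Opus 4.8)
The plan is to realise $\alpha(h)$, up to conditioning, as the terminal value of a lazy symmetric random walk whose $\pm1$ steps are the heights of a nested family of level lines --- one per dyadic scale between $m$ and $n$ --- so that the variance accrues a fixed amount at every scale. Write $\mu:=\mu_{\Lambda_n,0,0}$. Since $\alpha$ is a probability measure supported on $\Lambda_m$, a non-contractible level line surrounding $\Lambda_m$ shifts every $h_x$ with $x\in\Lambda_m$, hence $\alpha(h)$, by the same integer; and by flip symmetry (Theorem~\ref{thm_markov_flip}) the law of $h$ in the region cut out by an exploration that ends on a circuit carrying boundary height $a$ is symmetric around $a$. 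Consequently, if $\mathcal G$ is the $\sigma$-algebra generated by an exploration whose final exploration boundary is such a circuit surrounding $\Lambda_m$, then $\E_\mu[\alpha(h)\mid\mathcal G]$ equals the boundary height $a$ of that circuit, and $\Var_\mu[\alpha(h)]\ge\Var_\mu\big(\E_\mu[\alpha(h)\mid\mathcal G]\big)=\Var_\mu[a]$. It therefore suffices to design such an exploration whose terminal reference height $a_K$ satisfies $\Var_\mu[a_K]\ge c\log(n/m)$.

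Set $m_j:=8^j m$ for $0\le j\le K$, where $K:=\lfloor\log_8(n/m)\rfloor$ when $n/m$ is large and $K=1$ when $n/m$ is bounded; in either case $K\ge\tfrac12\log_8(n/m)$ and each annulus $\bb{2m_j}^2\setminus\bb{m_j}^2$ lies inside $\Lambda_{8m_{j+1}}$. Begin with $\mu$ and reference height $a_0:=0$. At super-step $j$ ($j=1,\dots,K$), having explored down to a random geometric domain $(D_{j-1},\tau_{j-1})$ with $\Lambda_{m_{K-j+1}}\subseteq D_{j-1}$ and with boundary height $a_{j-1}$ on the outermost circuit of $D_{j-1}$, look for a non-contractible circuit of $\calL_{a_{j-1}+1}$ or of $\calL_{a_{j-1}-1}$ inside the annulus $\bb{2m_{K-j}}^2\setminus\bb{m_{K-j}}^2$, using a symmetric selection rule (take the innermost circuit over the two heights together and break ties with a fresh fair coin). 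If such a circuit is found, explore it by a target-height exploration at height $a_{j-1}\pm1$ --- so the new exploration boundary carries boundary height $a_{j-1}\pm1$ --- and set $a_j:=a_{j-1}\pm1$ accordingly; otherwise set $a_j:=a_{j-1}$ and stop exploring at this scale. After $K$ super-steps one has $\Lambda_m\subseteq D_K$ with constant boundary height $a_K$ on $\partial D_K$, so $\E_\mu[\alpha(h)\mid\mathcal G]=a_K$ and the reduction above applies.

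Let $\mathcal G_j$ be the $\sigma$-algebra after super-step $j$ and $\epsilon_j:=a_j-a_{j-1}$. The conditional law inside $D_{j-1}$ is symmetric around $a_{j-1}$; the corresponding involution interchanges $\calL_{a_{j-1}+1}$- and $\calL_{a_{j-1}-1}$-circuits, respects the selection rule, and negates $\epsilon_j$. Hence, conditionally on $\mathcal G_{j-1}$, $\epsilon_j$ takes the values $\pm1$ with probability $q_j/2$ each and $0$ with probability $1-q_j$, where $q_j$ is the conditional probability that a circuit is found; in particular $\E[\epsilon_j\mid\mathcal G_{j-1}]=0$ and $\E[\epsilon_j^2\mid\mathcal G_{j-1}]=q_j$. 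Thus $(a_j)$ is a martingale with orthogonal increments and $\Var_\mu[a_K]=\sum_{j=1}^K\E_\mu[q_j]$. If one shows $q_j\ge c$ for a universal $c>0$, then $\Var_\mu[\alpha(h)]\ge\Var_\mu[a_K]\ge cK\ge\tfrac c2\log_8(n/m)$, which (after reconciling the bounded-ratio case, where one super-step already gives $\ge c$ while $\log(n/m)$ is bounded) proves the lemma with, say, $\ceff:=c/(12\ln2)$.

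The decisive remaining point is the uniform bound $q_j\ge c$. Shifting heights by $-a_{j-1}$ turns the conditional law inside $D_{j-1}$ into $\mu_{D_{j-1},\tau_{j-1},0}$, again a super-Gaussian model with boundary height $0$, and $q_j\ge c$ becomes $\mu_{D_{j-1},\tau_{j-1},0}\big(\calA^*(\calL_1,m_{K-j})\cup\calA^*(\calL_{-1},m_{K-j})\big)\ge c$, where $D_{j-1}\supseteq\Lambda_{8m_{K-j}}$. The hypothesis $p_{m_{K-j}}(V)\ge\cdicho$ supplies, by the definition of $p$, a geometric domain $(\Lambda^\ast,\tau^\ast)\in\Geom_{4m_{K-j},8m_{K-j}}$ with $\mu_{\Lambda^\ast,\tau^\ast,0}(\calA^*(\calL_1,m_{K-j}))\ge\cdicho$, hence also $\ge\cdicho$ for $\calL_{-1}$ by flip symmetry, and $(\Lambda^\ast,\tau^\ast)\preceq(D_{j-1},\tau_{j-1})$. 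The obstacle is that $\calA^*(\calL_1,\cdot)$ is increasing in $h$ but \emph{not} in $(|h|,\Delta)$, so Lemma~\ref{lem_mono_domains} does not transport the $\cdicho$-bound from the small optimising domain $\Lambda^\ast$ to the large explored domain $D_{j-1}$ directly. The way around is the alternative flagged in the proof overview: either the circuit event already has probability $\ge c$ in $D_{j-1}$, or it is rare there, in which case the height profile of $\mu_{D_{j-1},\tau_{j-1},0}$ at scale $m_{K-j}$ is necessarily spread over a band of heights wide enough that its variance --- hence the quantity being bounded --- already exceeds $\ceff\log(n/m)$. Making this rigorous amounts to sandwiching $\calA^*(\calL_1,\cdot)\cup\calA^*(\calL_{-1},\cdot)$ between events that \emph{are} increasing in $(|h|,\Delta)$ --- the prototype being ``there is a non-contractible circuit of $\calL_0^\dagger$ in the annulus'', i.e.\ a non-contractible circuit of same-sign vertices of modulus at least one --- comparing scales by the net lemma (Lemma~\ref{lemma_net}) and the RSW theory of Section~\ref{sec:RSW}, and then applying Lemma~\ref{lem_mono_domains} to the monotone envelope. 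This monotone-envelope comparison, where planarity and the correlation inequalities of Sections~\ref{sec:ll_thread}--\ref{sec:cor} enter essentially, is the main difficulty; the rest of the argument is bookkeeping.
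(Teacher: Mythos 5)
Your martingale/variance-accumulation skeleton is the right shape, and your reduction to bounding $\Var_\mu[a_K]$ via the tower property and flip symmetry is correct. But the proposal stalls at precisely the point you flag yourself, and your sketch of a way around does not survive scrutiny.

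The crux is the claim $q_j\geq c$. You observe correctly that $\calA^*(\calL_{\pm 1},\cdot)$ is not increasing in $(|h|,\Delta)$, so the $p_n\geq\cdicho$ bound --- defined as a \emph{supremum} over the (small) geometric domains in $\Geom_{4n,8n}$ --- does not transport to the (large) random explored domain $D_{j-1}$ by Lemma~\ref{lem_mono_domains}. Your proposed fix, sandwiching $\calA^*(\calL_1,\cdot)\cup\calA^*(\calL_{-1},\cdot)$ between events that are monotone in $(|h|,\Delta)$ and then comparing, cannot work as stated: there is no useful monotone envelope from below, because the circuit event can fail by $|h|$ being uniformly large on the annulus just as easily as by $|h|$ being uniformly zero. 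Deleting all $\calL_{\geq 2}$-circuits (to rule out overshoot) is a \emph{decreasing} requirement while the $\calL_{\geq 1}$-circuit is \emph{increasing}; their intersection is not monotone. This is exactly why the paper's proof overview (end of Section~\ref{subsec:proof_overview}) stresses that one cannot simply argue that the delocalising circuit appears at every scale, and why the lemma statement is for a fixed sequence of domains $\Lambda_{2^k m}$ rather than a random exploration.

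The paper's actual proof of Lemma~\ref{lem:deloclem} is structurally different from your per-scale martingale. It fixes a potential $V$ and a target measure $\mu_{\Lambda_{2^km},0,0}$, chooses at each $k$ a single near-optimal $(\Lambda,\tau)\in\Geom_{4\cdot 2^km,8\cdot2^km}$ achieving $\mu_{\Lambda,\tau,0}(\calC^*(\calL_1,2^km))\geq\cdicho/2$, and then defines a quantity $s_k$ as the largest $\ell$ for which a nested sequence of circuit events at Fibonacci heights $F_1,F_2,\dots,F_\ell$ and geometrically shrinking scales $2^{k-1}m,\dots,2^{k-\ell}m$ retains probability at least $(\cdicho/2)(1-\epsilon)^\ell$. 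It then runs a dichotomy in $s_k$. If $s_k=k$, the innermost circuit has height $F_k\gtrsim\phi^k$ with non-negligible probability, giving $v_{k+3}\gtrsim\phi^k$ directly. If $s_k<k$, the failure at stage $s_k+1$ is converted, via flip symmetry around $F_{s_k}$, into the occurrence of a \emph{monotone} event: a non-contractible circuit of $\{\lceil\Delta\rceil\geq 1+F_{s_k-2}\}$ around $\Lambda_{2^{k-s_k-1}m}$. The Fibonacci recursion $2F_{s_k}-F_{s_k+1}=F_{s_k-2}$ is what makes the flip argument produce a circuit at a nonzero absolute height; a naive $\pm1$ ladder collapses to $0$ after a single flip. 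This monotone circuit event does satisfy the hypotheses of Theorem~\ref{thm:generic_RSW} and Lemma~\ref{lem_mono_domains}, so it can be transferred to the fixed boxes $\Lambda_{16\cdot 2^km}$, and on this event one gains a variance increment $\gtrsim\phi^{2s_k}$ while on the complement the variance is still $\geq v_{k-s_k}$ by Lemma~\ref{lemma:abs_fkg_compare_zero}. This yields the difference inequality $v_{k+4}\geq v_{k-s_k}+c'\phi^{s_k}$, which iterates to $v_k\gtrsim k$.

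So: your proposal identifies the martingale heuristic and the obstruction, but the step ``$q_j\geq c$ uniformly'' is not provable by monotone sandwiching, and the paper's Fibonacci-and-flip-symmetry mechanism --- which turns a circuit \emph{failure} at one scale into a variance \emph{gain} via a monotone event --- is the missing idea. Without it the proof does not close.
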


\begin{proof}
    Fix $m$ and $\alpha$.
    The variance of $\alpha(h)$ is increasing in $n$
    by Lemma~\ref{lemma:cov}.
    Assume therefore, without loss of generality,
    that $m\geq 1000$ and that $n=2^km$
    for $k\in\Z_{\geq 0}$ (indeed, we also consider $k=0$ here).
    Define
    \[
        v_k:=\Var_{\mu_{\Lambda_{2^km}}}[\alpha(h)]
        =\mu_{\Lambda_{2^km}}[\alpha(h)^2]
        \qquad \forall k\in\Z_{\geq 0}.
    \]
    This sequence is nonnegative and nondecreasing.
    We shall use induction to prove that $v_k\geq ck$ for some universal constant $c>0$ not depending on anything
    (the desired result).
    The case $k=0$ is trivial.
    For the induction step, we would like to argue that
    $v_{k+1}\geq v_k+c$,
    by using that the $p_n$ are uniformly bounded below.
    Unfortunately, we cannot quite prove this directly.
    To illustrate the difficulty, we first work out the $k=1$
    and $k=2$ cases, before giving a general argument for $v_k\geq ck$.
    For any $k\in\Z_{\geq 0}$,
    let $\Lambda_{2^km}\subset\Lambda^k\subset\Lambda_{2^{k+1}m}$ denote any domain such that
    \begin{equation}
        \label{eq:deloc_domain_def}
        \mu_{\Lambda^k}[\CIRCUIT(\calL_1,2^km)]\geq \cdicho/2,
    \end{equation}
    which exists by the hypothesis on $(p_n)_n$.

    \emph{The case $k=1$.}
    Since $\mu_{\Lambda}[\alpha(h)^2]$ is increasing in $\Lambda$,
    we get
    \[
        v_1
        \geq 
        \mu_{\Lambda^0}[\alpha(h)^2]
        \geq
        \frac{\cdicho}{2}\cdot \mu_{\Lambda^0}[\alpha(h)^2|\CIRCUIT(\calL_1,m)]
        \geq \frac{\cdicho}{2}.
    \]
    For the last inequality,
    we observe that by exploring the outermost $\calL_1$-circuit from the boundary,
    we find that the conditional law inside this circuit is flip-symmetric around the height $1$.
    This implies that the conditional expectation of $\alpha(h)$
    is exactly equal to $1$,
    which means that the conditional expectation of $\alpha(h)^2$ is at least $1$.

    \emph{The case $k=2$ and the difficulties that arise.}
    We would like to argue as before that
    \begin{multline}
        \label{eq:naive_induction}
        v_2
        \geq 
        \mu_{\Lambda^1}[\alpha(h)^2]
        \geq
        \mu_{\Lambda^1}[\CIRCUIT(\calL_1,2m)]
        \mu_{\Lambda^1}[\alpha(h)^2|\CIRCUIT(\calL_1,2m)]
        +
        R
        \\\geq
        \mu_{\Lambda^1}[\CIRCUIT(\calL_1,2m)]
        (1+v_1)+R,
    \end{multline}
    where
    \[
        R:=
        \mu_{\Lambda^1}[\CIRCUIT(\calL_1,2m)^\complement]
        \mu_{\Lambda^1}[\alpha(h)^2|\CIRCUIT(\calL_1,2m)^\complement]
        .
    \]
    Equation~\eqref{eq:naive_induction} is easy to justify, but will not lead to the desired conclusion.
    Indeed, to extract the desired inequality from the above,
    we would like to argue that
    \begin{equation}
        \label{eq:desired_step}
    \mu_{\Lambda^1}[\alpha(h)^2|\CIRCUIT(\calL_1,2m)^\complement]
    \stackrel{?}{\geq}v_1
    \qquad 
    \implies
    \qquad
    v_2 \stackrel{?}{\geq} v_1 + \frac{\cdicho}{2}.
    \end{equation}
    The problem is that the conditioning event in Equation~\eqref{eq:desired_step}
    is \emph{not} a measurable event in terms of $|h|$,
    and consequently we do not really have any control over the conditional expectation.
    In the absence of Equation~\eqref{eq:desired_step},
    we could use the bound $R\geq 0$,
    but this does not lead to the linear growth in $k$ we are looking for.
    We therefore propose another argument with a more intricate induction step.

    \begin{assertion*}[Induction step]
        Let $\phi>1$ denote the golden ratio.
        Then there exists a universal constant $\tilde c>0$
        such that for any $k\in\Z_{\geq 2}$,
        we may find some $s\in\{1,\ldots,k-1,\infty\}$ such that:
    \begin{enumerate}
        \item\label{case:onef} If $s<\infty$, then $v_{k+2}\geq v_{k-s-1}+ \tilde c\phi^{s}$,
        \item\label{case:twof} If $s=\infty$, then $v_k\geq(\cdicho/2) \phi^{k-1}$,
    \end{enumerate}
    where $\phi$ is the golden ratio.
    \end{assertion*}

    This assertion may look mysterious,
    but it is easy to see that it implies the desired result,
    together with the base cases $k=0$ and $k=1$ established above
    and monotonicity of $(v_k)_k$.
    To establish the assertion, we proceed as follows.
    Fix $k\in\Z_{\geq 2}$ and consider $\mu_{\Lambda^{k-1}}$
    (which is defined such that~\eqref{eq:deloc_domain_def} holds true).
    We start at the largest scale (namely $k$), and ask the following question:
    \begin{quote}
        ``If the event $\CIRCUIT(\calL_1,2^{k-1}m)$ occurs, does a similar event that \emph{is} increasing in $|h|$, also occur
        with a sufficiently good probability?''
    \end{quote}
    If ``yes'', then we can use a more complicated version of the argument for the case $k=2$ above.
    If ``no'', then we ask the same question at the \emph{smaller} scale $k-1$,
    but with $\calL_1$ replaced by $\calL_{F_1}$ where $F_1>1$ is a larger integer.
    By repeating this procedure over the scales,
    either we answer ``yes'' at some scale (Case~\ref{case:onef} in the assertion),
    or we reach the smallest scale, and conclude in a different way
    (Case~\ref{case:twof}).

    We now start the formal derivation.
    Let $(F_\ell)_{\ell\geq -2}$ denote the Fibonacci sequence
    started from $F_{-2}=0$ and $F_{-1}=1$.
    Let $\phi=(1+\sqrt{5})/2= 1.618\mathord{\ldots}$ denote the golden ratio,
    and observe that $F_\ell\geq \phi^\ell$ for all $\ell\geq 0$.
    Now let $\epsilon>0$ denote an extremely small universally fixed constant
    whose precise value is determined later.
    Recall the definition of $\Lambda^{k-1}\subset\Lambda_{2^km}$,
    and define
    \[
        s:=
        \inf\left\{
            \ell\in\{0,\ldots,k-1\}:
            \mu_{\Lambda^{k-1}}[\calC(\calL_{\geq F_\ell},2^{k-\ell-1}m)]
            <
            (\cdicho/2)(1-\epsilon)^\ell
        \right\}
        .
    \]
    This value belongs to $\{0,\ldots,k-1,\infty\}$,
    and in fact we have $s\geq 1$ due to Equation~\eqref{eq:deloc_domain_def}.
    We think of $s$ as the number of scales we must consider before answering
    ``yes'' to the question above.
    We now want to prove the two cases in the assertion
    (and we must also fit the correct values for $\epsilon$ and $\tilde c$).

    \emph{The case $s=\infty$.}
    This case is easy.
    By definition of $s$, we get
    \[
        \mu_{\Lambda^{k-1}}[\calC(\calL_{\geq F_{k-1}},m)]
            \geq 
            (\cdicho/2)(1-\epsilon)^{k-1}.
    \]
    Exactly like in the case $k=1$ above, we get
    \begin{multline}
        v_k
        \geq
        \mu_{\Lambda^{k-1}}[\alpha(h)^2]
        \geq
        \mu_{\Lambda^{k-1}}[\calC(\calL_{\geq F_{k-1}},m)]
        \mu_{\Lambda^{k-1}}[\alpha(h)^2|\calC(\calL_{\geq F_{k-1}},m)]
        \\
        \geq 
        (\cdicho/2)(1-\epsilon)^{k-1}F_{k-1}^2\geq (\cdicho/2)\phi^{k-1},
    \end{multline}
    where the final inequality holds true if we choose $\epsilon>0$ so small that $(1-\epsilon)\phi\geq 1$.

    \begin{figure}
      \centering
      \includegraphics{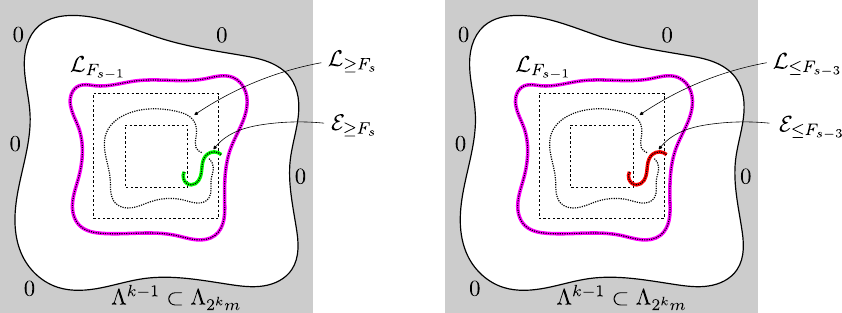}
      \caption{Flipping argument in the delocalisation proof.
      \textsc{Left}:
        When the inner circuit fails to close, there must be a dual arm.
        We use a lower bound on the probability of this event as an input.
        \textsc{Right}: By flipping around $F_{s-1}$, we obtain a lower 
        bound on the probability of an $\calE_{\leq F_{s-3}}$-arm.
      }
      \label{fig:gap_flip}
    \end{figure}

    \emph{The case $s\in\{1,\ldots,k-1\}$.}
    This case is more involved.
    The definition of $s$
    yields a bound on $\mu_{\Lambda^{k-1}}[\calC(\calL_{\geq F_\ell},2^{k-\ell-1}m)]$
    for \emph{both} $\ell = s-1$ and $\ell = s$,
    and those bounds may be combined to give
    \begin{equation}
        \label{eq:deloc_diff_ineq_start}
                \mu_{\Lambda^{k-1}}[
            \calC(\calL_{\geq F_{s-1}},2^{k-s}m)
            \setminus
            \calC(\calL_{\geq F_{s}},2^{k-s-1}m)
        ]
        \geq \epsilon (\cdicho/2)(1-\epsilon)^{s-1}.
    \end{equation}
    We now claim that also
    \[
        \mu_{\Lambda^{k-1}}[
            \calA(\calE_{\leq F_{s-3}},2^{k-s-1}m)
        ]
        \geq  \epsilon (\cdicho/2)(1-\epsilon)^{s-1}.
    \]
    This follows from a flipping argument, see Figure~\ref{fig:gap_flip}.
    To see that the claim is true, notice first that
    $\calC(\calL_{\geq F_{s}},2^{k-s-1}m)^\complement=\calA(\calE_{\geq F_{s}},2^{k-s-1}m)$.
    Now go back to Equation~\eqref{eq:deloc_diff_ineq_start}.
    Conditional on the event $\calC(\calL_{\geq F_{s-1}},2^{k-s}m)$,
    we may explore the outermost $\calL_{F_{s-1}}$-circuit from the boundary.
    Conditional on the circuit (which surrounds $\Lambda_{2^{k-s}m,2^{k-s}m}$),
    the law of $h$ is flip-symmetric around the height $F_{s-1}$.
    In particular, conditional on this circuit,
    the events
    \[
        \calA(\calE_{\geq F_{s}},2^{k-s-1}m)
        \qquad\text{and}\qquad
        \calA(\calE_{\leq F_{s-3}},2^{k-s-1}m)
    \]
    have the same probability (notice that $F_{s-1}$ is the average of $F_{s}$ and $F_{s-3}$).
    This yields the desired claim.

    The claim also implies that
    \begin{equation}
        \mu_{\Lambda^{k-1}}[
            \calA(\calE,2^{k-s-1}m)
        ]
        \geq  \epsilon (\cdicho/2)(1-\epsilon)^{s-1};
        \qquad
        \calE:=\calE_{\leq F_{s-3}} \cup \calE_{\geq - F_{s-3}},
    \end{equation}
    since adding edges to the percolation makes it easier to make the arm event
    occur.
    Moreover, the percolation $\calE$ is an increasing function
    of $|h|$, since
    it contains the edges where the interpolated height functions $|h|$ remains strictly above $F_{s-3}$
    on the entire unit line segment.

    \begin{figure}
      \centering
      \includegraphics{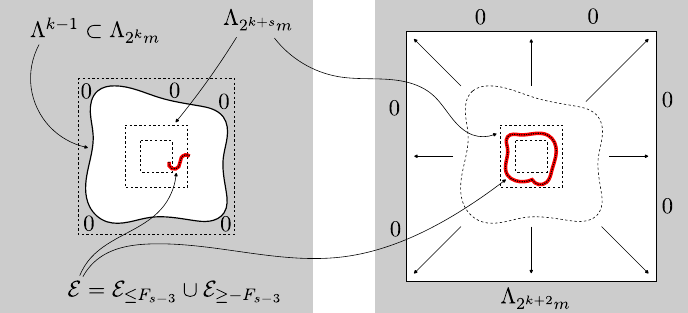}
      \caption{The percolation $\calE$ is increasing in $|h|$,
      and therefore we may push the boundary outwards to apply the RSW theory (Theorem~\ref{thm:generic_RSW}) to
      turn the lower bound on the arm probability into a lower bound on the circuit probability.}
      \label{fig:deloc_RSW}
    \end{figure}

    By Theorem~\ref{thm:generic_RSW}, Statement~2,
    there exists a constant $N\in\Z_{\geq 1}$ such that
    \begin{multline}
        \label{eq:deloc_RSW_application}
        \mu_{\Lambda_{2^{k+2}m}}[
            \calC(\calE,2^{k-s-1}m)
        ]
        \geq  \psiRSW{N}( \epsilon (\cdicho/2)(1-\epsilon)^{s-1})
        \\
        \geq (\epsilon (\cdicho/2N)(1-\epsilon)^{s-1})^N,
    \end{multline}
    where the second inequality follows from the properties of $\psiRSW{N}$.
    See Figure~\ref{fig:deloc_RSW} for an illustration.
    Finally, we claim that
    \begin{align}
        \label{eq:deloc_final_conditional_bounds_1}
               & \mu_{\Lambda_{2^{k+2}m}}\big[
            \alpha(h)^2\big|
            \calC(\calE,2^{k-s-1}m)
        \big]
        \geq v_{k-s-1} + (1+F_{s-3})^2
        \\
        \label{eq:deloc_final_conditional_bounds_2}
               &         \mu_{\Lambda_{2^{k+2}m}}\big[
            \alpha(h)^2\big|
            \calC(\calE,2^{k-s-1}m)^\complement
        \big]
        \geq v_{k-s-1}.
    \end{align}
    Suppose that the claim is true.
    Jointly with Equation~\eqref{eq:deloc_RSW_application},
    this implies that
    \[
        v_{k+2}
        \geq
        v_{k-s-1}
        +
        (\epsilon (\cdicho/2N)(1-\epsilon)^{s-1})^N
        (1+F_{s-3})^2.
    \]
    Now fix $\epsilon\in(0,1)$ such that $(1-\epsilon)^N\phi= 1$
    (since $N$ is a universal constant, so is $\epsilon$).
    The second term in the above formula may then be lower bounded
    by $\tilde c \phi^s$, where $\tilde c>0$ is another universal constant
    (chosen as a function of $\cdicho$, $N$, and $\epsilon$).
    This concludes the proof for the case that $s<\infty$,
    and thus also the proof of the assertion and the lemma.

    It suffices to prove the claim (Equations~\eqref{eq:deloc_final_conditional_bounds_1} and~\eqref{eq:deloc_final_conditional_bounds_2}).
    Equation~\eqref{eq:deloc_final_conditional_bounds_2} follows immediately from Lemma~\ref{lemma:monotonicity_abs_conditioned}.
    We can apply this lemma because the conditioning event is measurable with respect to $|h|$
    (unlike for the case $k=2$ discussed above).
    For Equation~\eqref{eq:deloc_final_conditional_bounds_1},
    we first explore the outermost $\calE$-circuit from the boundary of $\Lambda_{2^{k-s}m}$ (in the conditional event).
    Let $\Lambda$ denote the domain surrounded by this circuit,
    and let $\xi:=h|_{\partial\Lambda}$ denote the induced boundary condition.
    Then
    \[
        \mu_{\Lambda_{2^{k+2}m}}\big[
            \alpha(h)^2\big|
            \calC(\calE,2^{k-s-1}m)
        \big]
        =
        \mu_{\Lambda_{2^{k+2}m}}\big[
            \mu_{\Lambda}^\xi[\alpha(h)^2]
            \big|
            \calC(\calE,2^{k-s-1}m)
        \big].
    \]
    It suffices to prove that, for almost every $(\Lambda,\xi)$,
    \[
        \mu_{\Lambda}^\xi[\alpha(h)^2]
        \geq
        v_{k-s-1} + (1+F_{s-3})^2.
    \]
    Notice that almost surely $\xi\geq 1+F_{s-3}$ or $\xi\leq -1-F_{s-3}$.
    The two cases are identical up to a global sign flip;
    we focus on the first case.
    We claim that
    \[
        \mu_{\Lambda}^\xi[\alpha(h)^2]
        \geq
        \mu_{\Lambda}^{1+F_{s-3}}[\alpha(h)^2]
        =
        (1+F_{s-3})^2
        +
        \mu_{\Lambda}[\alpha(h)^2]
        \geq
        (1+F_{s-3})^2
        +
        v_{k-s-1}.
    \]
    The first inequality is monotonicity of absolute heights (Lemma~\ref{lemma:monotonicity_abs}),
    the equality follows from the flip-symmetry of the model with boundary condition $1+F_{s-3}$
    (Theorem~\ref{theorem:interpolation_properties}),
    and the last inequality follows from monotonicity of the variance in the domain (Lemma~\ref{lemma:cov}).
    In the very last step we use that almost surely $\Lambda_{2^{k-s-1}m}\subset\Lambda$.
    This concludes the proof.
\end{proof}

\begin{remark}
    \label{rem:deloc_extension}
    The $\Lambda\uparrow\C$ limits in Lemma~\ref{lemma:cov}
    are also well-defined when $V\in\PsiPositive=\PsiDeloc$.
    In that case, covariances between heights tend to infinity,
    and covariances between signs tend to one.
    This means that we may extend Equations~\eqref{eq:norm1} and~\eqref{eq:norm2}
    to $V\in\PsiDeloc$ by setting the ``norm'' $\|\blank\|_V$ to be identically zero.
\end{remark}

\section{Finite-size criterion and continuity (Theorems~\ref{thm:deloc_at_crit} and~\ref{thm:local_exp})}
\label{sec:continuity444}

We first establish the following lemma.

\begin{lemma}
    \label{lemma:continuity_observable_PSI}
    For any $n\in\Z_{\geq 1}$,
    the map $p_n|_\Psi$ is continuous.
    In particular, the alternative correlation length $\xi'|_\Psi$ defined in Equation~\eqref{eq:alternative_correlation_length} is upper semicontinuous.
\end{lemma}

\begin{proof}
    Define a distance between continuum domains via
    \[
        \operatorname{Distance}(\Lambda_1,\Lambda_2)
        :=
        d_{\operatorname{Hausdorff}}(\C\setminus \Lambda_1, \C\setminus \Lambda_2).
    \]
    Let $D_n:=\{\Lambda:\text{$\Lambda$ is a continuum domain with $\Lambda_n\subset\Lambda\subset\Lambda_{2n}$}\}$.
    Notice that $D_n$ is a compact metric space under the above distance metric.
    It is easy to see that the map
    \[
        f_n:
        D_n \times \Psi \to [0,1]
        ,\,
        (\Lambda,V)
        \mapsto
        \text{the probability $
        \mu_{\Lambda}
        [\CIRCUIT(\calL_1,n)]$ for the potential $V$}
    \]
    is continuous.
    As $D_n$ is compact,
    the map $p_n|_\Psi=\sup_{\Lambda\in D_n} f_n(\Lambda,\blank)$ is also continuous.
\end{proof}

\begin{proof}[Proof of Theorems~\ref{thm:deloc_at_crit} and~\ref{thm:local_exp} over $\Psi$]
    Fix $V\in\PsiLoc=\PsiDecay$.
    Then the set
    \begin{equation}
      \{V'\in\Psi:\xi'(V')\leq\xi'(V)\}
    \end{equation}
    is an open subset of $\Psi$.
    First, this implies that the height function is delocalised in a $\Psi$-neighbourhood of $V$
    (Theorem~\ref{thm:deloc_at_crit}).
    Second, Remark~\ref{remark:correlation_lengths_comparison} implies that $\xi$ is also uniformly bounded on a $\Psi$-neighbourhood of $V$ (Theorem~\ref{thm:local_exp}).
\end{proof}

% \part{Derivation of the interface coarse-graining inequality (Lemma~\ref{lemma:preface_second_coarse_graining})}
\part{Proof of the interface coarse-graining inequality}
\label{part:lemma_proof}

This part is structured as follows:
Section~\ref{sec:statement_of_ingredients} states the three proof ingredients
used in the proof of Lemma~\ref{lemma:preface_second_coarse_graining}.
Those three ingredients are established in Section~\ref{sec:ingred_I_symmetry}--\ref{sec:push}.
The proof of Lemma~\ref{lemma:preface_second_coarse_graining} is contained
in Section~\ref{sec:cg2proof} (but can be read independently of Sections~\ref{sec:ingred_I_symmetry}--\ref{sec:push}).

\section{Statements of Ingredients~I--III}
\label{sec:statement_of_ingredients}

All steps in the proof of the interface coarse-graining inequality must be consistent
with the two phases.
The key feature shared by the two phases,
is the feature that with a good probability the ``total length''
of the interface $\calI_{01}$ is ``close to be as small as possible''
(we call this feature \emph{interface minimisation}).

\begin{lemma}[Symmetric domain lemma, cf.~Figure~\ref{fig:symm}]
    \label{lemma:symdom_new}
    Consider the following setup:
    $S\subset\R^2$ is a closed square with integer coordinates,
    let $B$ be the strip $S+\R e_1$,
    let $\Lambda\subset\C\cap B$ denote a continuum domain,
    and let $\xi$ be a boundary condition on $\partial\Lambda$ such that $0\leq\xi\leq 1$ and
    such that $\{\xi=1\}$ is disjoint from $S$.
    Then
    \[
        \mu_{\Lambda}^\xi\Big(\{\text{the regional excursion $\calR_0(\partial\Lambda)$ contains a horizontal crossing of $S$}\}\Big)
        \leq \frac12.
    \]
\end{lemma}

The previous lemma is the first lemma exhibiting interface minimisation:
indeed, the shortest possible interface $\calI_{01}$ separating the zeros
and the ones in the boundary condition, is by having a vertical crossing of zeros.

\begin{figure}
    \includegraphics{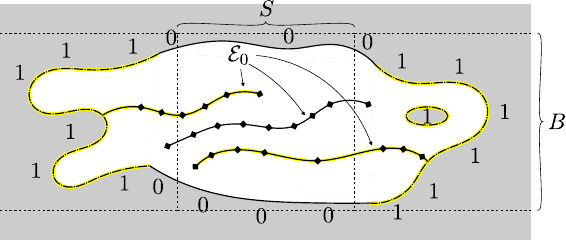}
    \caption{Lemma~\ref{lemma:symdom_new}.
    The excursion $\calE_0$ is drawn,
    and the subset $\calR_0(\partial\Lambda)$ is highlighted in yellow.
    This configuration \emph{does not} realise the event in the lemma:
    the square $S$ is crossed horizontally by a single $\calE_0$-path,
    but this path is not part of the boundary excursion $\calR_0(\partial\Lambda)$.
    }
    \label{fig:symm}
\end{figure}

\newcommand{\cfirstcoarsegrainingnew}{{c_{\operatorname{favour}}}}

\begin{lemma}[Percolation under favourable boundary conditions, cf.~Figure~\ref{fig:lemma:cg1_new}, Left]
    \label{lemma:cg1_new}
    There exists a universal constant $\cfirstcoarsegrainingnew>0$ with the following properties.
    For any $w,h,r\geq 1$, we have
    \[
        \mu_{\Lambda_{w,h}}\left[
            \left\{\parbox{20em}{all connected components of $\calE_0$ which intersect $[-w,w]\times[-r,r]$ have a diameter below $r/1000$}\right\}
        \right]
        \geq (\cfirstcoarsegrainingnew)^{w/r}.
    \]
    Notice that the probability of this event in $\mu_\Lambda$ is a decreasing function of $\Lambda$.
\end{lemma}

\begin{figure}
    \includegraphics{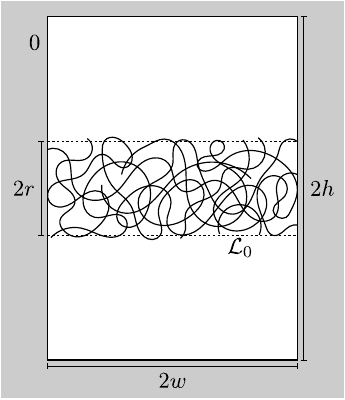}\qquad     \includegraphics{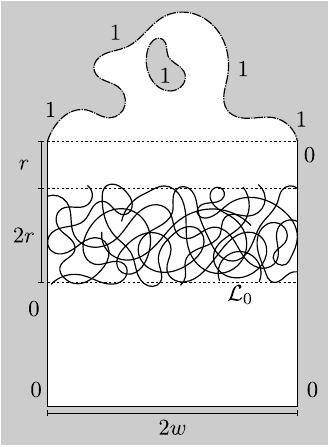}
    \caption{Left: Lemma~\ref{lemma:cg1_new}. The percolation $\calL_0$ is almost omnipresent within the small rectangle with probability at least $(\cfirstcoarsegrainingnew)^{w/r}$
    when boundary conditions are imposed at height $0$.
    Right: Lemma~\ref{lemma:pushing_new}. The boundary conditions are now different: they are at height $0$ below the line $\R\times\{2r\}$
    and at height $1$ above the line.
    The proof of this lemma is much harder because of the unfavourable boundary conditions at the top.}
    \label{fig:lemma:cg1_new}
\end{figure}

The lower bound in this lemma is consistent with the two phases:
in the delocalised phase we expect that it is more or less optimal (for the right constant);
in the localised phase, we expect that the lower bound can be improved
to something of the form $(1-e^{-cr})^{w/r}$.
We shall see in Section~\ref{sec:favourable} that Lemma~\ref{lemma:cg1_new}
relies on basic percolation theory; interface minimisation does not play a role.

The following lemma is a generalisation of the previous one:
it states the same inequality under more general boundary conditions.
More precisely, where only zeros were allowed on the boundary in the previous lemma,
we now also allow ones on the boundary (above the line $\R\times\{2r\}$).

\begin{lemma}[Pushing lemma, cf.~Figure~\ref{fig:lemma:cg1_new}, Right]
    \label{lemma:pushing_new}
    There exists a universal constant $\cpush>0$ with the following properties.
    Fix $w,r\geq 1$
    and let $(\Lambda,\xi)\in\Bound$ denote a boundary condition such that all of the following hold true:
    \begin{itemize}
        \item The domain satisfies $\Lambda\subset [-w,w]\times \R$,
        \item The boundary heights are bounded by $0\leq\xi\leq 1$,
        \item The location of the ones is constrained via $\{\xi=1\}\subset [-w,w]\times [2r,\infty)$.
    \end{itemize}
    Then
    \begin{equation}
            \label{eq:push_new_target}
            \mu_{\Lambda}^\xi\left[
                \left\{\parbox{20em}{all connected components of $\calE_0$ which intersect $[-w,w]\times[-r,r]$ have a diameter below $r/1000$}\right\}
            \right]
            \geq (\cpush)^{w/r}.
    \end{equation}
\end{lemma}

The pushing lemma is relatively hard to prove and requires some Russo--Seymour--Welsh theory
(although in a different way than stated in Theorem~\ref{thm:generic_RSW}).
The above lemma captures a second manifestation of interface minimisation:
indeed, the shortest possible interface $\calI_{01}$ separating the zeros
and the ones in the boundary condition, is an interface that stays close to the top boundary.

The ingredients (and some of the lemmas below) extend to a slightly wider setting via the following remark.

\begin{remark}
    \label{remark:generalisation}
    Let $\Lambda$ denote a domain and $\xi,\zeta$
    two boundary conditions with $\xi+a \leq \zeta$ for some $a\in\Z$.
    Let $E(\calX)$ denote an increasing event defined in terms of the percolation $\calX$.
    Then
    \[
        \mu_\Lambda^\xi[E(\calL_0)] \leq \mu_\Lambda^\zeta[E(\calL_{\leq a})].
    \]
\end{remark}

\begin{proof}
    We get
    \[
        \mu_\Lambda^\xi[E(\calL_0)]
        \leq \mu_\Lambda^\xi[E(\calL_{\leq 0})]
        = \mu_\Lambda^{\xi+a}[E(\calL_{\leq a})]
        \leq \mu_\Lambda^\zeta[E(\calL_{\leq 0})].
    \]
    The first inequality is a union bound,
    the second monotonicity in heights (Lemma~\ref{lemma:monotonicity}).
\end{proof}

\section{Ingredient~I: Analysis of symmetric domains}
\label{sec:ingred_I_symmetry}

We first state a general, abstract version of the symmetric domain lemma (Lemma~\ref{lemma:symdom_new}).
We recommend the reader to have Lemma~\ref{lemma:symdom_new} in mind when reading the statement and proof
of this general version.
We then give a proof of Lemma~\ref{lemma:symdom_new} as a corollary.

The general statement is necessary for the proof of the pushing lemma
in Section~\ref{sec:push}.
The general lemma is stated for the square lattice cable graph $\C$,
but also works for any other planar graphs as is explained later.

\begin{lemma}[cf. Figures~\ref{fig:symm} and~\ref{fig:symm2}]
\label{lemma:symdom_general}
    Let $\Lambda$ denote a continuum domain and $0\leq\xi\leq 1$ a boundary height function,
    and let $\Sigma$ denote an involutive automorphism of the underlying cable graph with the property that
    $\emptyset=\Sigma\Lambda\cap\{\xi=1\} =\Sigma\{\xi=1\}\cap\{\xi=1\}$.

    % Let $I_{xy}\subset\C$ be the line segment between $x$ and $y$.
    Define
    \begin{gather}
        \calQ:=\calQ_\Lambda:=\{xy\in\E:\text{$xy$ is not incident to $\calR_0=\calR_0(\partial\Lambda)$}\}.
    \end{gather}
    Then for any bounded increasing function $X$ on $\{0,1\}^\E$,
    we have
    \[
        \mu_{\Lambda}^\xi[X(\calR_0)]
        \leq 
        \mu_{\Lambda}^\xi[X(\Sigma\calQ)].
    \]
    In particular, if $A\subset\{0,1\}^\E$ is an increasing event such that 
     $\{\calR_0\in A\}$ and $\{\Sigma\calQ\in A\}$ are disjoint,
    then $\mu_{\Lambda}^\xi(\{\calR_0\in A\})\leq 1/2$.
\end{lemma}

\begin{figure}
    \includegraphics{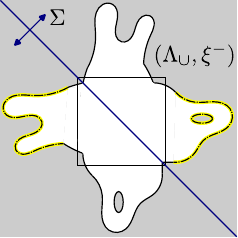}
    \caption{Lemma~\ref{lemma:symdom_new} (Figure~\ref{fig:symm})
    is a corollary of Lemma~\ref{lemma:symdom_general}.
    The figure above illustrates a symmetrised version of Figure~\ref{fig:symm},
    as it appears in the proof of Lemma~\ref{lemma:symdom_general}.
    }
    \label{fig:symm2}
\end{figure}

\begin{proof}
    Let $\Lambda_\cup = \Lambda\cup\Sigma\Lambda$ and define
    \[
        \xi^-:\partial\Lambda_\cup\to\Z,\,x\mapsto\begin{cases}
            \xi(x)&\text{if $x\in\partial\Lambda$,}\\
            0&\text{otherwise;}
        \end{cases}
        \hspace{4em}
        \xi^+:=1-\xi^-.
    \]
    The conditions on the triple $(\Lambda,\xi,\Sigma)$ imply that
    \(
        (\Lambda,\xi)\preceq(\Lambda_\cup,\xi^-)
        \preceq(\Lambda_\cup,\Sigma\xi^+)
    \).

    We claim that
    \begin{multline}
        \mu_{\Lambda}^\xi[X(\calR_0)]
        \leq \mu_{\Lambda_\cup}^{\xi^-}[X(\calR_0)]
        = \mu_{\Lambda_\cup}^{\xi^+}[X(\calR_1)]
        = \mu_{\Lambda_\cup}^{\Sigma\xi^+}[X(\Sigma\calR_1)]
        \\
        \leq
        \mu_{\Lambda_\cup}^{\Sigma\xi^+}[X(\Sigma\calQ)]
        \leq 
        \mu_{\Lambda}^\xi[X(\Sigma\calQ)].
    \end{multline}
    The first inequality is monotonicity in domains,
    using that $(\Lambda,\xi)\preceq(\Lambda_\cup,\xi^-)$.
    The second inequality follows from $\calR_1\subset\calQ$,
    which holds true almost surely since $\calR_0$ connects
    vertices with height $\geq 1$ on the endpoints, while $\calR_1$
    connects vertices with height $\leq 0$ on the endpoints.
    For the final inequality, we use again monotonicity in domains with $(\Lambda,\xi)\preceq(\Lambda_\cup,\Sigma\xi^+)$,
    noting that $\calQ$ is a decreasing function of $\calR_0$.
\end{proof}

\begin{proof}[Proof of Lemma~\ref{lemma:symdom_new}]
    Recall the context of the lemma.
    Choose one of the two diagonals of the square (extended to a full line),
    and let $\Sigma$ denote the reflection map over that diagonal (cf. Figures~\ref{fig:symm} and~\ref{fig:symm2}).
    Let $A\subset\{0,1\}^{\E}$ denote the set of subsets of $\E$
    containing a horizontal crossing of $S$.
    Then, in the context of Lemma~\ref{lemma:symdom_general}, we have
    \begin{gather}
        \{\text{$\calR_0$ contains a horizontal crossing of $S$}\}
        =
        \{\calR_0\in A\};
        \\
        \{\Sigma\calQ\in A\}=\{\text{$\calQ$ contains a vertical crossing of $S$}\}.
    \end{gather}
    Since no vertex is incident to both $\calR_0$ and $\calQ$,
    those percolations must respect the planarity of the plane,
    and therefore the two events are disjoint.
    Lemma~\ref{lemma:symdom_general} therefore yields the desired result.
\end{proof}

\section{Ingredient~II: Percolation with favourable boundary conditions}
\label{sec:favourable}

This section proves Lemma~\ref{lemma:cg1_new}.
The key step is the following proposition.

\begin{proposition}
\label{proposition:fav}
For any $n\in\Z_{\geq 1}$, we have
\(
    \mu_{\Lambda_{20n}}(\calA(\calL_0,n)) \geq 2^{-8000}
\).
\end{proposition}

In percolation theory, one may attempt to prove a \emph{coarse-graining inequality} (cf.~\cite[Lemma~10]{Duminil-CopinTassion_2019_RenormalizationCrossingProbabilities}).
This is a dichotomy where the first scenario corresponds to Proposition~\ref{proposition:fav},
and where the second scenario entails exponential decay of the percolation cluster.
We prove Proposition~\ref{proposition:fav} in two steps:
first we derive this dichotomy, and then we rule out the second scenario
using an argument specific to height functions.
The two steps are formalised in the following two lemmas.

\begin{lemma}[Coarse-graining inequality]
    \label{lemma:cg2_new}
    Suppose that $\mu_{\Lambda_{20m}}(\calA(\calL_0,m)) < 2^{-8000}$
    for some fixed $m\in\Z_{\geq 1}$.
    Then there exists a constant $c>0$ such that for all $n\in\Z_{\geq 1}$,
    \[
        \mu_{\Lambda_{20n}}(\calA(\calL_0,n)),\,\mu_{\Lambda_{20n}}(\calC(\calL_0,n)) \leq \frac1ce^{-cn}.
    \]
\end{lemma}

\begin{lemma}[Duality argument for height functions]
    \label{lemma:duality_new}
    It is impossible to have exponential decay of  $(\mu_{\Lambda_{20n}}(\calA(\calL_0,n)))_n$
    and $(\mu_{\Lambda_{20n}}(\calC(\calL_0,n)))_n$.
\end{lemma}

\begin{definition}
  \label{def:annulus_circuit_general}
    Recall the definitions of $\calC(\calX,n)$ and $\calA(\calX,n)$ from Definition~\ref{def:circuit_observable} and~\ref{def:arms}.
    Write $\calC_u(\calX,n)$ and $\calA_u(\calX,n)$ for the same events, except that the circuit or arm must occur in the topological annulus $([-2n,2n]^2\setminus[-n,n]^2)+u$.
\end{definition}

\begin{proof}[Proof of Lemma~\ref{lemma:cg2_new}]
    Fix $m$ as in the statement of the lemma.
    Consider some value of $n\in\Z_{\geq m}$.
    We first make the following claim:
    let $X\subset 2m\Z^2$ denote any set of vertices
    such that $(\Lambda_{20m}+u)_{u\in X}$ is a disjoint family of
    boxes contained in $\Lambda_{20n}$.
    Then
    \[
        \mu_{\Lambda_{20n}}\big(\cap_{u\in X} \calA_u(\calL_0,m)\big) \leq 2^{-8000|X|}.
    \]
    The claim follows from the FKG inequality and the Markov property, which yield
    \begin{align*}
        \mu_{\Lambda_{20n}}\big(\cap_{u\in X} \calA_u(\calL_0,m)\big)
           & \leq
        \mu_{\Lambda_{20n}}\big(\cap_{u\in X} \calA_u(\calL_0,m)
        \big|
        \cap_{u\in X} \{h|_{\partial\Lambda_{20m}+u}\equiv 0\}
        \big)
        \\
       & =
        \mu_{\Lambda_{20m}}(\calA(\calL_0,m))^{|X|}
        \leq 2^{-8000|X|}.
    \end{align*}

    The rest of the proof is now the same as the proof of~\cite[Lemma~10]{Duminil-CopinTassion_2019_RenormalizationCrossingProbabilities},
    and runs roughly as follows.
    Let $u\in[-10n,10n]^2$ and let $\calU$ denote the $\calL_0\cap[-10n,10n]^2$-connected
    component of $u$.
    Define
    \[
        X_u:=\{v\in 2m\Z^2:\calU\cap([-m,m]^2+v)\neq\emptyset\}.
    \]
    The set $X_u$ is called a \emph{coarse-grained} version of $\calU$.
    By comparing the combinatorial growth of the lattice animals corresponding to $X_u$
    to the bound in the claim, it is easy to see that $|X_u|$ decays exponentially fast
    (uniformly in $n$).
    We then conclude that the diameter of $\calU$ must decay exponentially fast as well (uniformly in $n$).
    By a union bound over the vertices in the box, we derive
    exponential decay for the events in the lemma.
\end{proof}

\begin{proof}[Proof of Lemma~\ref{lemma:duality_new}]
    Assume exponential decay.
    We show that this leads to a contradiction,
    by estimating in two incompatible ways the maximum of the height function.

    \textbf{Upper bound.}
    Recall that $H_{\Lambda_{20n}}$ denotes the Hamiltonian of the height function.
    By standard free energy calculations,
    we know that there exists some constant $C>0$ (depending on the potential function, but not on $n$)
    such that
    \[
        \mu_{\Lambda_{20n}}[H_{\Lambda_{20n}}] \leq Cn^2.
    \]
    Recall that $V$ denotes the potential function.
    Suppose that $V(0)$ without loss of generality (by adding a constant to $V$ is necessary),
    and set $c':=V(1)-V(0)>0$.
    It is then easy to see that
    \[
        H_{\Lambda_{20n}}(h)
        \geq c'\sum_{xy}|h_y-h_x| \geq c' \max_x |h_x|.
    \]
    A Markov inequality then yields that for any $\lambda>0$,
    \begin{equation}
        \label{eq:Hamiltonian_upperbound}
        \mu_{\Lambda_{20n}}(\{\max\nolimits_u |h_u| \geq \lambda n^2\})
        \leq \frac{C}{c'}\frac1\lambda.
    \end{equation}

    \emph{The estimate in Equation~\eqref{eq:Hamiltonian_upperbound} is far from optimal,
    but suffices for our purposes.
    We did not yet use the exponential decay assumption.
    We shall do so when deriving the lower bound, which shall contradict Equation~\eqref{eq:Hamiltonian_upperbound}.}

    \textbf{Lower bound.}
    Fix $n$,
    and let $a\in\Z$ denote the random variable defined as the smallest integer such that
    the event
    \begin{equation}
        \label{eq:firstconnection}
        \calA(\{h\leq a\},n)
    \end{equation}
    occurs, where $\{h\leq a\}$ means the subset of edges of $\E$ such that
    $h$ is at most $a$ at both endpoints.
    Claim that the following event must hold true almost surely:
    \[
        \calA(\calL_a,n)\cup\calC(\calL_a,n).
    \]
    To prove the claim, suppose that $\calC(\calL_a,n)$ does not occur.
    In other words, the event $\calA(\calE_a,n)$ must occur.
    We must prove that $\calA(\calL_a,n)$ occurs as well.
    Consider the path realising the event $\calA(\calE_a,n)$:
    on this path, either $h> a$ or $h< a$ at all vertices.
    But the second case is impossible by definition of $a$,
    which means that the event
    $\calA(\{h>a\},n)$ must occur.
    But by definition of $a$, the event $\calA(\{h\leq a\},n)$ occurs as well,
    which implies the occurrence of the desired event since
    \[
        \calA(\{h\leq a\},n) \cap \calA(\{h>a\},n) \subset \calA(\calL_a,n)
    \]
    via the intermediate value theorem. This proves the claim.

    Notice that $\max_x |h_x|\geq |a| $.
    To prove a contradiction with Equation~\eqref{eq:Hamiltonian_upperbound},
    it suffices to prove that $a$ is typically very large.
    Since $a$ is an integer, it suffices to prove that for any $k$, we have
    \[
        \mu_{\Lambda_{20n}}(\{a=k\})\leq
        \mu_{\Lambda_{20n}}(\calA(\calL_k,n)\cup\calC(\calL_k,n)) \leq \frac2c e^{-cn}.
    \]
    The inequality on the left is the claim proved above.
    We must prove the inequality on the right.
    For $k=0$ this is the exponential decay assumption.
    By monotonicity for absolute heights (Lemma~\ref{lemma:monotonicity_abs}),
    we get
    \begin{multline}
        \mu_{\Lambda_{20n}}(\calA(\calL_k,n)\cup\calC(\calL_k,n))
        =
        \mu_{\Lambda_{20n}}^k(\calA(\calL_0,n)\cup\calC(\calL_0,n))
        \\
        \leq 
        \mu_{\Lambda_{20n}}(\calA(\calL_0,n)\cup\calC(\calL_0,n)).
    \end{multline}
    This yields the desired bound for all $k$.
\end{proof}

We have now completed the proof of Proposition~\ref{proposition:fav}.
To arrive at Lemma~\ref{lemma:cg1_new}, we perform two more steps:
first, we use the RSW theory of~\cite{Kohler-SchindlerTassion_2023_CrossingProbabilitiesPlanar} as a black box to turn our arm estimate
in a circuit estimate,
then, we use the FKG inequality to combine circuit estimates to arrive at the desired bound.

\begin{lemma}
    \label{lemma:RSW_fav}
    There exists a universal constant $c>0$, such that for any $n\in\Z_{\geq 1}$,
    we have
    \[
        \mu_{\Lambda_{8n}}(\calC(\calL_0,n)) \geq c.
    \]
\end{lemma}

\begin{proof}
    Apply Theorem~\ref{thm:generic_RSW}, Statement~3
    with $c:=\psiRSW{N}(2^{-8000})$.
\end{proof}

We are now in a position to prove Lemma~\ref{lemma:cg1_new}.
It is not a ``height functions'' proof; the same proof works for the random-cluster model with cluster weight $q\geq 1$.
It combines the FKG inequality, the Markov property, and the previous lemma.
We recommend skipping the proof on a first read.

\begin{proof}[Proof of Lemma~\ref{lemma:cg1_new}]
    For convenience, let us impose that:
    \begin{itemize}
        \item $r\geq 2^{1000}$ (the case for smaller $r$ is handled a the end),
        \item $h\geq 2^{1000} \cdot r$ (since the probability is decreasing in $h$, we do not lose generality),
        \item $w\geq 2^{1000}\cdot h$ (this case for smaller $w$ is also handled at the end).
    \end{itemize}
    Roughly speaking, the proof runs as follows.
    We apply $O(w/r)$ times Lemma~\ref{lemma:RSW_fav}.
    Notice that $\calC(\calL_0,n)=\calA(\calE_0,n)^c$.
    In other words, every time a circuit event occurs, the excursion percolation $\calE_0$ is constrained further.
    We start on the largest scale, then work our way down to the smallest scale, until $\calE_0$ is so constrained that the desired event in Lemma~\ref{lemma:cg1_new} must clearly occur as well.

    To formalise this, let $\calL_0'$ denote the dual of $\calE_0$ in the full-plane lattice $\Z^2$
    (this means that $\calL_0'$ is the union of the random set $\calL_0$ with the set of dual edges not concerned in the domain under consideration).
    This definition is quite natural, since the law of $\calL_0'$ in $\mu_\Lambda$ is now truly stochastically increasing in the domain $\Lambda$.

    For $n\in\Z_{\geq 0}$, define the ``dyadic'' set
    \begin{equation}
      \label{eq:diadic_set}
      \D_n:= \big([-2w,2w]\times[-2r,2r]\big) \cap \big(2^n \Z^2\big),
    \end{equation}
    and the event
    \begin{equation}
      \label{eq:diadic_event}
      \calW_n := \cap_{u\in \D_n} \calC_u(\calL_0',2^n).
    \end{equation}
    
    The proof now runs as follows.
    \begin{itemize}
        \item First, we observe that at the largest scale $n=\lfloor\log_2(w)\rfloor+8$, we have $\D_n=\{0\}$,
        and the event $\calW_n$ occurs $\mu_{\Lambda_{w,h}}$-almost surely, since the circuit is realised by dual edges outside of the domain which are always open.
        \item Then, we shall show below that for any $n\in\Z_{\geq 0}$, we have
        \begin{equation}
          \label{eq:iterative_bound}
            \mu_{\Lambda_{w,h}}[\calW_n|\calW_{n+1}]\geq c^{|\D_n|},
        \end{equation}
        where $c$ is the constant from Lemma~\ref{lemma:RSW_fav}.
        This enables us to work our way down along the scales.
        \item
        Notice that for $n_0:= \lfloor\log_2(r)\rfloor-100$, we have
        \begin{equation}
          \label{eq:favourable_inclusion}
          \calW_{n_0}\subset
          \left\{\parbox{20em}{all connected components of $\calE_0$ which intersect $[-w,w]\times[-r,r]$ have a diameter below $r/1000$}\right\}=:\calY.
        \end{equation}
        We want to lower bound the probability of $\calW_{n_0}$.
        Iterating Equation~\eqref{eq:iterative_bound} yields
        \begin{equation}
          \label{eq:}
          \mu_{\Lambda_{w,h}}[\calW_{n_0}]
          \geq \exp\Big(\log (c) \cdot \sum_{m=n_0}^{\lfloor\log_2(w)\rfloor+8} |\D_{m}|\Big).
        \end{equation}
        Since $\sum_{m=n_0}^{\lfloor\log_2(w)\rfloor+8} |\D_{m}| \leq 2^{1000}\cdot w/r$, we have
        \begin{equation}
          \label{eq:final_bound}
          \mu_{\Lambda_{w,h}}[\calW_{n_0}] \geq (c^{2^{1000}})^{w/r},
        \end{equation}
        which is the desired bound.
    \end{itemize}

    \begin{figure}
  \centering
  \includegraphics{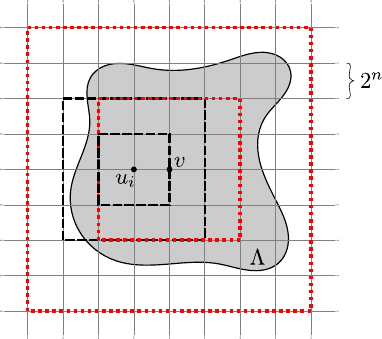}
  \caption{The larger (red) annulus corresponds to the event $\calC_{v}(\calL_0',2^{n+1})$,
    and the smaller (black) annulus corresponds to the event $\calC_{u_i}(\calL_0',2^n)$.
    The figure is to scale; the grid spacing is $2^n$.
    Suppose that the event $\calC_{v}(\calL_0',2^{n+1})$
    is realised by a $\calL_0'$-circuit $\gamma$ in the red annulus (the circuit is represented by the curved solid line).
    To see if $\calC_{u_i}(\calL_0',2^n)$ occurs, it suffices
    to exclude the occurrence of an $\calE_0$-arm within the domain $\Lambda$.}
  \label{fig:favour_annuli_setup}
\end{figure}

    It remains to derive Equation~\eqref{eq:iterative_bound}.
    Fix $n$.
    Let $(u_i)_{1\leq i\leq |\D_n|}$ denote an enumeration of $\D_n$.
    It suffices to prove that for any $i$, we get
    \begin{equation}
      \mu_{\Lambda_{w,h}}\Big[\calC_{u_i}(\calL_0',2^n)\Big|\calW_{n+1}\cap \Big(\bigcap_{j<i} \calC_{u_j}(\calL_0',2^n)\Big)\Big]\geq c.
    \end{equation}
    Fix $i$, and write $E$ for the conditioning event on the right.
    By definition of the dyadic sets, there exists a vertex $v\in\D_{n+1}$ at $\ell^\infty$-distance at most $2^n$ from $u_i$
    (Figure~\ref{fig:favour_annuli_setup}).
    Conditional on the event $\calC_{v}(\calL_0',2^{n+1})\supset E$,
    we can explore the outermost circuit of $\calL_0'$ in this annulus (the red annulus in Figure~\ref{fig:favour_annuli_setup}).
    Write $\nu:=\mu_{\Lambda_{w,h}}[\cdot|E]$;
    let $\Lambda$ denote the random domain enclosed by this circuit,
    and let $h'$ denote the restriction of $h$ to $\Lambda^c$.
    For a given $\Lambda$ and $h'$, let $E_{\Lambda,h'}$ denote the $\mu_{\Lambda}$-event that $h\sim\mu_{\Lambda}$ and $h'$ jointly realise the event $E$.
    Notice that for fixed $(\Lambda,h')$, the event $E_{\Lambda,h'}$ is also an increasing event in terms of $\calL_0$.
    We now claim that (Figure~\ref{fig:favour_annuli_setup})
    \begin{align}
      \mu_{\Lambda_{w,h}}[\calC_{u_i}(\calL_0',2^n)|E]
      &=
        \int \mu_{\Lambda}[\calC_{u_i}(\calL_0',2^n)|E_{\Lambda,h'}] \diffi\nu(\Lambda,h')
       \\& \geq 
        \int \mu_{\Lambda}[\calC_{u_i}(\calL_0',2^n)] \diffi\nu(\Lambda,h')
        \\&\geq \mu_{(\Lambda_{2^{n+3}})+u_i}[\calC_{u_i}(\calL_0',2^n)]
        \geq c.
    \end{align}
    The equality is by the Markov property and the definition of the conditional expectation.
    For the first inequality, observe that $E_{\Lambda,h'}$ is an increasing event in terms of $\calL_0$, so that the FKG inequality applies.
    The second inequality follows by boundary pushing, since $\Lambda$ is almost surely contained in $(\Lambda_{2^{n+3}})+u_i$.
    The final inequality is Lemma~\ref{lemma:RSW_fav}.

    \textbf{Proof for arbitrary $w\geq 1$.}
        \label{page:wadionawiodnawd}
    Fix $w$, and set $\bar w:=\lceil w\rceil \leq 2w$.
    Now for sufficiently large $k\in \Z_{\geq 0}$, the first part of the proof yields
    \[
        \mu_{\Lambda_{(2k+1)\bar w,h}}\left[\calY\right] \geq (c^{2^{1000}})^{(2k+1)\bar w/r}.
    \]
    Now the set $\Lambda_{(2k+1)\bar w,h}$ contains $2k+1$ disjoint copies of $\Lambda_{w,h}$, shifted by $i\cdot 2\bar we_1$ where $i=-k,\ldots,k$.
    Write $\Lambda'\subset\Lambda_{(2k+1)\bar w,h}$ for the union of those copies.
    The boundaries of those copies are not contained in $\Lambda'$.
    By monotonicity in domains and the Markov property,
    \begin{equation*}
        \mu_{\Lambda_{(2k+1)\bar w,h}}\left[\calY\right]
        \leq
        \mu_{\Lambda'}\left[\calY\right]
        \leq \mu_{\Lambda_{w,h}}\left[\calY\right]^{2k+1}.
    \end{equation*}
    In combination with the previous display, this yields the desired bound for arbitrary $w$. 

    \textbf{Proof for arbitrary $r\geq 1$.}
    We already proved the desired bound for $r\geq 2^{1000}$.
    To cover the case of smaller $r$, it suffices to lower bound the probability of the event
    \[
        \left\{\text{there is no $\calE_0$-open edge with an endpoint in $[-w,w]\times[-2^{1000},2^{1000}]$}\right\}.
    \]
    This can be done in two steps.
    First, we apply the argument above with $r=2^{2000}$ to the event $\calY$.
    Then, using similar ideas as in the proof of Equation~\eqref{eq:iterative_bound},
    it is easy to show that conditional on $\calY$, each edge with an endpoint in the strip has a uniformly positive probability of being $\calE_0$-closed, independently of the other edges.
    This yields the desired bound for arbitrary $r$.
\end{proof}

\section{Ingredient~III: Pushing lemma}
\label{sec:push}

Earlier versions of this manuscript (Versions~1 and~2 on arXiv)
contained a proof of the pushing lemma via an adaptation of the RSW theory in~\cite{Karrila_2023_LogarithmicDelocalizationRandom}
to the setting without invariance under $\pi/2$ rotation.
This method seems robust but technically involved.
After those earlier versions appeared online, Karrila~\cite{Karrila_2023_LogarithmicDelocalizationRandom}
found an easier way to prove the pushing lemma that is adaptable to our setting.
In the current (updated) version, an adaptation of Karrila's method is presented in this section in order to streamline the presentation.

\subsection{Statement of the main estimate and proof overview}
Lemma~\ref{lemma:pushing_simpler} below contains the main estimate.
We prove it first.
Subsection~\ref{subsec:proof_lemma_pushing_new} proves that Lemma~\ref{lemma:pushing_simpler} implies the pushing lemma (Lemma~\ref{lemma:pushing_new}),
by recycling some techniques of Section~\ref{sec:favourable}.

\begin{lemma}[cf. Figure~\ref{fig:pushing_simple_result}]
    \label{lemma:pushing_simpler}
    There exists a universal constant $p\in(0,1)$ with the following property.
    Fix $r\in 64\Z_{\geq 1}$.
    Let $\Lambda\subset\R\times[-11r,13r]$ denote a continuum domain,
    and $\xi$ a boundary condition with $0\leq\xi\leq 1$ and
    $\{\xi=1\}\subset\R\times[11r,13r]$.
    Write $\calR_0:=\calR_0(\partial\Lambda)$.
    Then for any $a\in\Z$, we have
    \begin{equation}
        \label{eq:pushing_simpler}
        \mu_{\Lambda}^\xi
        \big[\big\{
            \connectin{[a,a+\tfrac{r}{16}]\times\{r\}}{\calR_0}{\R\times[-r,r]}{\R\times\{-r\}}
        \big\}\big]
        \leq p.
    \end{equation}

    Combining this with the FKG inequality yields
    \begin{equation}
        \label{eq:pushing_simpler_conclusion}
        \mu_{\Lambda}^\xi
        [\{
            \calR_0\subset \R\times[-r,\infty)
        \}]
        \geq (1-p)^{\lceil 16(x_+-x_-) /r\rceil},
    \end{equation}
    where $x_-$ and $x_+$ are the $x$-coordinates of the left- and rightmost points in
    $\Lambda\cap(\Z\times\{r\})$.
\end{lemma}

\begin{figure}
  \centering
  \includegraphics{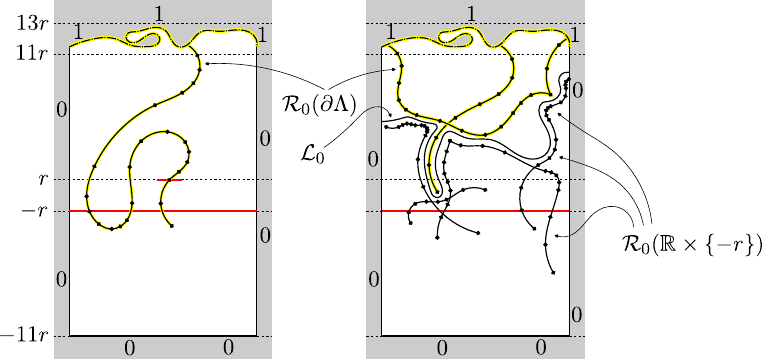}
  \caption{Left: Equation~\eqref{eq:pushing_simpler}
  says that with probability at most $p$,
  the percolation $\calR_0(\partial\Lambda)$ connects
  the short red edge on $\R\times\{r\}$
  to the long red edge on $\R\times\{-r\}$.
  Right: If this event \emph{does not} occur for any of the short red edges
  (Equation~\eqref{eq:pushing_simpler_conclusion}),
  then $\calR_0(\partial\Lambda)$ cannot reach to $\R\times\{-r\}$.
  In that case, $\calR_0(\R\times\{-r\})$ \emph{does not} reach to $\partial\Lambda$,
  and exploring this set induces boundary conditions at height $0$ on the boundary of the explored domain.
  }
  \label{fig:pushing_simple_result}
\end{figure}

To prove the lemma (in Subsections~\ref{subsec:pushing_lower}--\ref{subsec:pushing_upper_zigzag}),
we work in the following context.
Fix $r$, $\Lambda$, $\xi$, and $a$ as in the statement of the lemma.
Write $r_k:=r/2^k$ and $a':=a+r_4$ throughout.
Fix $p\in(0,1)$, and suppose that
\begin{equation}
    \label{eq:contradiction_inequality}
    \mu_{\Lambda}^\xi
    [\calT_{x_0,x_{128}}]
    > p,
\end{equation}
where
\begin{equation}
  \calT_{x,x'}:=\big\{
            \connectin{[x,x']\times\{r\}}{\calR_0}{\R\times[-r,r]}{\R\times\{-r\}}
        \big\},
\end{equation}
and where $x_0:=a$ and $x_{128}:=a'$
(the purpose of the indexing will be clear soon).
In other words, we assume that Equation~\eqref{eq:pushing_simpler} is false (for this value of $p$).

In this context, we shall define (in Subsection~\ref{subsec:pushing_lower}) three events, such that:
\begin{itemize}
    \item The \emph{most likely} one has a probability exceeding $1-127\sqrt[1152]{1-p}$ (Subsection~\ref{subsec:pushing_lower}),
    \item Each event has a probability of at most $1-\cfirstcoarsegrainingnew/2$ (Subsections~\ref{subsec:pushing_upper_straight}--\ref{subsec:pushing_upper_zigzag}).
\end{itemize}
The contradiction arises when $1-127\sqrt[1152]{1-p}=1-\cfirstcoarsegrainingnew/2$,
which implies Lemma~\ref{lemma:pushing_simpler} for this value of $p$.

\subsection{Lower bound on alternating crossing patterns}
\label{subsec:pushing_lower}

\begin{lemma}
    \label{lemma:pushing_step_a}
    Consider the context of Equation~\eqref{eq:contradiction_inequality}.
    If $1-\sqrt[384]{1-p}>1/2$, then we may find integers $ a=x_0\leq x_1\leq \cdots\leq x_{127}\leq x_{128}= a'$ such that
    \begin{equation}
        \label{eq:push_simpler}
        \min_{i=0,\ldots,127}
        \mu_{\Lambda}^\xi[\calV_i(\calR_0)]> 1-\sqrt[384]{1-p},
    \end{equation}
    where
    \begin{equation}
        \calV_i(\calR_0):=\big\{
            \connectin{[x_i,x_{i+1}]\times\{r\}}{\calR_0}{\R\times[-r,r]}{[x_i,x_{i+1}]\times\{-r\}}
               \big\}.
    \end{equation}
\end{lemma}

\begin{figure}
  \centering
  \includegraphics{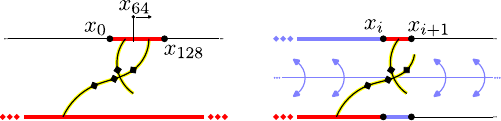}
  \caption{Proof of Lemma~\ref{lemma:pushing_step_a}. \textsc{Left}: By sliding $x_{64}$ from left to right,
  we may find a value at which the square root trick yields a lower bound for \emph{both}
  connection events.}
  \label{fig:pushing_step_a}
\end{figure}

\begin{proof}[Proof (cf.~Figure~\ref{fig:pushing_step_a})]
By the square root trick, we have, for any integer $x_0\leq x_{64}< x_{128}$,
\[
    \max\{\mu_{\Lambda}^\xi[\calT_{x_0,x_{64}}], \mu_{\Lambda}^\xi[\calT_{x_{64}+1,x_{128}}]\}> 1-\sqrt{1-p}.
\]
Let $x_{64}$ be the smallest integer such that the \emph{first} probability is larger
(Figure~\ref{fig:pushing_step_a}, \textsc{Left}).
Then
\[
    \min\{\mu_{\Lambda}^\xi[\calT_{x_0,x_{64}}], \mu_{\Lambda}^\xi[\calT_{x_{64},x_{128}}]\}> 1-\sqrt{1-p}.
\]
By repeating this trick in the subintervals,
we may find integers $x_0\leq x_1\leq x_2\leq \ldots\leq x_{127}\leq x_{128}$ such that
\begin{equation}
    \label{eq:push_simpler_1}
    \min_{i=0,\ldots,127}
    \mu_{\Lambda}^\xi[\calT_{x_i,x_{i+1}}]> 1-\sqrt[128]{1-p}.
\end{equation}

Now fix $i$.
If $\calT_{x_i,x_{i+1}}$ occurs, then one of the following three increasing events must also occur:
the event $\calV_i(\calR_0)$,
or one of the following two events:
\begin{gather}
  C_-:=
  \big\{
    \connectin{[x_i,x_{i+1}]\times\{r\}}{\calR_0}{\R\times[-r,r]}{(-\infty,x_i)\times\{-r\}} 
  \big\};
  \\
    C_+:=
  \big\{
    \connectin{[x_i,x_{i+1}]\times\{r\}}{\calR_0}{\R\times[-r,r]}{(x_{i+1},\infty)\times\{-r\}} 
  \big\}.
\end{gather}
Indeed, the percolation must hit the line $\R\times\{-r\}$
on the interval $[x_i,x_{i+1}]$ (corresponding to $\calV_i(\calR_0)$), or the left of this interval (corresponding to $C_-$), or on the right (corresponding to $C_+$).
By the square root trick, we have
\begin{equation}
  \max\{\mu_\Lambda^\xi[\calV_i(\calR_0)], \mu_\Lambda^\xi[C_-], \mu_\Lambda^\xi[C_+]\}> 1-\sqrt[384]{1-p}.
\end{equation}
To prove the lemma, it suffices to prove that $\mu_\Lambda^\xi[C_\pm]\leq 1/2$,
so that the maximum must necessarily come from $\calV_i(\calR_0)$.
This follows from Lemma~\ref{lemma:symdom_general}
applied with flip symmetry over the $x$-axis ($\Sigma:(x,y)\mapsto(x,-y)$).
Consider, for example, the event $C_-$;
see Figure~\ref{fig:pushing_step_a}, \textsc{Right}.
The event is disjoint from the event where $\calQ$ realises the same event but with the flipped
target zones (light blue), by planarity and the definition of $\calQ$.
Since the event involving $\calQ$ has the higher probability of the two events,
we deduce that $\mu_\Lambda^\xi[C_-]\leq 1/2$.
\end{proof}

\begin{lemma}[Lower bound on alternating crossing patterns]
  \label{lem:alternating_crossing}
  Consider the context of Equation~\eqref{eq:contradiction_inequality},
  and suppose that $1-\sqrt[384]{1-p}>1/2$.
  Recall the definition of the percolation $\calQ$ from Lemma~\ref{lemma:symdom_general}.
  Then we may find integers $a=x_0\leq x_1\leq \cdots\leq x_{127}\leq x_{128}= a'$ such that (at least) one of the following three alternatives holds true.
  \begin{itemize}
    \item \textbf{Straight crossings.} We have
    \begin{equation}
        \label{lem:alternating_crossing:straight}
        \mu_{\Lambda}^\xi[\calS_0(\calQ)\cap \calV_1(\calR_0)\cap\calS_2(\calQ)\cap\calV_3(\calR_0)\cap\cdots\cap\calS_{126}(\calQ)]
        > 1-127\sqrt[1152]{1-p},
    \end{equation}
    where \[\calS_i(\calX):=
    \big\{
        \connectin{[x_i,x_{i+1}]\times\{r\}}{\calX}{[a-r_1,a'+r_1]\times[-r,r]}{[x_i,x_{i+1}]\times\{-r\}}
        \big\}.
    \]
    \item \textbf{Zigzag crossings (left).}
    The same bound holds true, with $\calS_i(\calX)$ replaced by
    \[
        \calZ_i(\calX):=
        \big\{
            \connectin{[x_i,x_{i+1}]\times\{-r\}}{\calX}{\R\times[-r,r]}{[x_i,x_{i+1}]\times\{r\},\, \{a-r_3\}\times[-r,r]}    
        \big\}
    \]
    \item \textbf{Zigzag crossings (right).}
    The same bound holds true, with $\calS_i(\calX)$ replaced by
    \[
        \calZ'_i(\calX):=\big\{
            \connectin{[x_i,x_{i+1}]\times\{-r\}}{\calX}{\R\times[-r,r]}{[x_i,x_{i+1}]\times\{r\},\, \{a'+r_3\}\times[-r,r]} 
        \big\}.
    \]
  \end{itemize}
\end{lemma}

\begin{proof}
    Let $i^*$ denote the index of the shortest interval $[x_i,x_{i+1}]$
    (choosing the smallest index in case of ties),
    and let $I=[x_{i^*},x_{i^*+1}]$ denote this interval.
    The proof consists of three steps.
    \begin{enumerate}
        \item The event $\calV_{i^*}(\calR_0)$ is the union of three events,
        leading to another square root trick.
        \item Each of these events can be translated to an event in terms of $\calQ$,
        via Lemma~\ref{lemma:symdom_general}.
        \item Finally, we bound an intersection of events via a union bound.
    \end{enumerate}

    \begin{figure}
      \centering
      \includegraphics{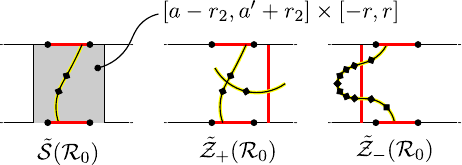}
      \caption{Proof of Lemma~\ref{lem:alternating_crossing}.  
      The vertical crossing must either remain in the shaded rectangle (\textsc{Left}),
      or hit its right side (\textsc{Middle}), or hit its left side (\textsc{Right}).}
      \label{fig:pushing_step_b}
    \end{figure}

    \emph{Step 1 (Figure~\ref{fig:pushing_step_b}).}
    The event $\calV_{i^*}(\calR_0)$ is the union of the following
    three increasing events:
    \begin{gather}
        \tilde\calS(\calR_0):=\big\{
        \connectin{I\times\{r\}}{\calR_0}{[a-r_2,a'+r_2]\times[-r,r]}{I\times\{-r\}}
        \big\};
        \\        \tilde\calZ_+(\calR_0):=\big\{
        \connectin{I\times\{r\}}{\calR_0}{\R\times[-r,r]}{I\times\{-r\},\, \{a'+r_2\}\times[-r,r]}
        \big\};
\\
        \tilde\calZ_-(\calR_0):=\big\{
        \connectin{I\times\{r\}}{\calR_0}{\R\times[-r,r]}{I\times\{-r\},\, \{a-r_2\}\times[-r,r]}
        \big\}.
    \end{gather}
    By another square root trick, we get
    \[
        \max\big\{\mu_\Lambda^\xi[\tilde\calS(\calR_0)], \mu_\Lambda^\xi[\tilde\calZ_+(\calR_0)], \mu_\Lambda^\xi[\tilde\calZ_-(\calR_0)]\big\}> 1-\sqrt[1152]{1-p}.
    \]
    We now prove that at least one of the three alternatives in Lemma~\ref{lem:alternating_crossing} must hold true,
    depending on the event(s) achieving the maximum.

    \begin{itemize}
        \item \textbf{If $\tilde\calS$ is the maximiser,
        then we obtain ``straight crossings''.}
        \emph{Step 2.}
        We claim that for any $i$, we have
        \begin{equation}
            \label{lem:alternating_crossing:straight:sufficient}
            \mu_\Lambda^\xi[\calS_i(\calQ)]\geq \mu_\Lambda^\xi[\tilde\calS(\calR_0)]> 1-\sqrt[1152]{1-p}.
        \end{equation}
        \emph{Step 3.}
        If the claim holds true, then a union bound implies Equation~\eqref{lem:alternating_crossing:straight}.

        It suffices to prove Equation~\eqref{lem:alternating_crossing:straight:sufficient} for fixed $i$.
        Before, we used Lemma~\ref{lemma:symdom_general} with a flip symmetry,
        but we shall now apply it with a rotation symmetry by an angle $\pi$
        around a properly chosen point.
        More precisely,
        let $\Sigma$ denote the rotation symmetry 
        around some half-integer point 
        $u\in(\Z/2)\times\{0\}$ 
        such that
        \begin{equation}
            \label{eq:patterns_rot_def}
            \Sigma (I\times [-r,r])\subset [x_i,x_{i+1}]\times [-r,r].
        \end{equation}
        It is easy to find such a symmetry $u$ because $I$ was the shortest interval.
        Since the rectangle in the definition of $\calS_i$ is much wider than the one in the definition of $\tilde\calS$, we also get
        \begin{equation}
            \Sigma ([a-r_2,a'+r_2]\times [-r,r])\subset 
            [a-r_1,a'+r_1]\times[-r,r].
        \end{equation}
        By definition of $\calS_i$, we then get
        \(
            \tilde\calS(\Sigma\calQ) \subset \calS_i(\calQ)
        \),
        and therefore Lemma~\ref{lemma:symdom_general} yields
        \[
            \mu_\Lambda^\xi[\calS_i(\calQ)]\geq \mu_\Lambda^\xi[\tilde\calS(\Sigma\calQ)]\geq \mu_\Lambda^\xi[\tilde\calS(\calR_0)]> 1-\sqrt[1152]{1-p}.    
        \]
        This proves Equation~\eqref{lem:alternating_crossing:straight:sufficient},
        which implies Equation~\eqref{lem:alternating_crossing:straight}. 

        \item \textbf{If $\tilde\calZ_+$ is the maximiser, then we obtain ``zigzag crossings (left)''.}
        \emph{Step~2.}
        Fix $i$.
        Like before, let $\Sigma$ denote rotation around a half-integer point such
        that~\eqref{eq:patterns_rot_def} holds true.
        This also implies that
        \begin{align}
            \Sigma ([a'+r_2,\infty)\times [-r,r])&\subset 
            (-\infty,a-r_3]\times[-r,r].
        \end{align}
        Then by definition of $\calZ_i$ we have
        \(\tilde\calZ_+(\Sigma\calQ)\subset\calZ_i(\calQ)\).
        Just like before, we use Lemma~\ref{lemma:symdom_general} to obtain
        \(\mu_\Lambda^\xi[\calZ_i(\calQ)]> 1-\sqrt[1152]{1-p}\).
        \emph{Step 3.}
        The desired inequality for ``zigzag crossings (left)'' now follows
        via a union bound.
        \item \textbf{If  $\tilde\calZ_-$ is the maximiser, then we obtain ``zigzag crossings (right)''.} Idem.
        \qedhere
    \end{itemize}
\end{proof}

\subsection{Upper bound on alternating crossings (straight)}
\label{subsec:pushing_upper_straight}

This subsection establishes an \emph{upper bound} on
the probability in Equation~\eqref{lem:alternating_crossing:straight}.

\begin{remark}
    \label{remark:extraconditioningevent}
    Consider the context of Lemma~\ref{lem:alternating_crossing}.
    Consider the regional exploration $\calR_{\leq 1}(Y)$
    started from the set $Y=[a-r,a'+r]\times\{-r,r\}$.
    Then 
    \[
        \mu_\Lambda^\xi[E]\geq \cfirstcoarsegrainingnew;\qquad
        E=\{\text{all conn.\ comp.\ of $\calR_{\leq 1}(Y)$ have a diam.\ below $r_4$}\}
    .\]
\end{remark}

\begin{proof}
    Since $\Lambda\subset \R\times[-11r,13r]$,
    we may apply Lemma~\ref{lemma:cg1_new}
    with $w=13r$ and $h\approx\infty$ to $\Lambda$ rotated by an angle $\pi/2$.
    This lemma may then be applied to the boundary condition $\xi\leq 1$
    and the percolation $\calR_{\leq 1}(\blank)$ via Remark~\ref{remark:generalisation}.
\end{proof}

\begin{lemma}
    \label{lemma:pushing_upper_straight}
In the context of Lemma~\ref{lem:alternating_crossing}, we have
\[
    \mu_{\Lambda}^\xi[\calS_0(\calQ)\cap \calV_1(\calR_0)\cap\calS_2(\calQ)\cap\cdots\cap\calS_{126}(\calQ)]\leq 1-\cfirstcoarsegrainingnew/2.
\]
\end{lemma}

\begin{figure}
  \centering
  \includegraphics{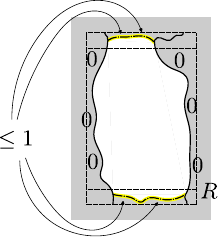}
  \caption{Proof of Lemma~\ref{lemma:pushing_upper_straight}.
  In the rectangle $R$, we explore the left- and rightmost vertical $\calL_0$-crossings,
  as well as connected components of $\calE_{\leq 1}$ intersecting the top and bottom
  (those connected components are small thanks to the conditioning event $E$).
  This induces a new domain (white region), where we may
    then apply the symmetric domain lemma (Lemma~\ref{lemma:symdom_new}).
    It says that the probability of a vertical $\calE_{\leq 0}$-crossing is at most $1/2$.
  }
  \label{fig:pushing_step_c}
\end{figure}

\begin{proof}
    Using the previous remark, it suffices to prove that
    \begin{equation}
      \label{eq:bafiownaowdin}
      \mu_{\Lambda}^\xi[\calS_0(\calQ)\cap \calV_1(\calR_0)\cap\calS_2(\calQ)|E]\leq 1/2.
    \end{equation}
    Recall that the percolations $\calR_0,\calQ\subset\E$ never touch each other (no vertex is incident to both).
    Recall also that the connected components of $\calR_0$ are surrounded by $\calL_0$-interfaces.
    We prove Equation~\eqref{eq:bafiownaowdin} using these properties 
    and Lemma~\ref{lemma:symdom_new}.
    Consider the alternating crossing event in Equation~\eqref{eq:bafiownaowdin}.
    The two vertical $\calQ$-crossings are contained in the rectangle 
    $R:=[a-r_1,a'+r_1]\times[-r,r]$, and
    sandwich the vertical $\calR_0$-crossing.
    In particular, this alternating crossing event is contained in the event $E'$ where
    \[
        E':=\left\{\parbox{22em}{from left to right, $R$ contains a vertical $\calL_0$-crossing,
        then a $\calE_{\leq 0}$-crossing, and then a $\calL_0$-crossing}\right\}.
    \]
    It suffices to prove that $\mu_{\Lambda}^\xi[E'|E]\leq 1/2$.

    This is straightforward;
    the following construction is illustrated by Figure~\ref{fig:pushing_step_c}. Conditional on $E$, we may first explore the left- and rightmost
    vertical $\calL_0$-crossings of $R$,
    and then use the conditioning on $E$ to induce $\calL_{\leq 1}$-boundary conditions
    on the top and bottom.
    For $E'$ to occur, the induced domain must be crossed vertically by $\calE_{\leq 0}$.
    But that event occurs with probability at most $1/2$,
    by Lemma~\ref{lemma:symdom_new} combined with Remark~\ref{remark:generalisation}.
\end{proof}

\subsection{Upper bound on alternating crossings (zigzag)}
\label{subsec:pushing_upper_zigzag}

This subsection establishes upper bounds on
the zigzag crossing events in Lemma~\ref{lem:alternating_crossing}.

\begin{lemma}
    \label{lemma:pushing_upper_zigzag}
In the context of Lemma~\ref{lem:alternating_crossing}, we have
\[
    \mu_{\Lambda}^\xi[\calZ_0(\calQ)\cap \calV_1(\calR_0)\cap\calZ_2(\calQ)\cap\cdots\cap\calZ_{126}(\calQ)]\leq 1-\cfirstcoarsegrainingnew/2.
\]
By symmetry this bound extends to the situation where each $\calZ_i$ is replaced with $\calZ_i'$.
\end{lemma}

\begin{proof}
    By Remark~\ref{remark:extraconditioningevent}, it suffices
    to prove that the conditional probability of the zigzag event in $\mu_\Lambda^\xi[\blank|E]$
    is at most $1/2$ (like in the previous proof).

    For $i\in\{1,3,5,\ldots,125\}$, write $\calE^i$
    for the union of all $(\calE_{\leq 0}\cap (\R\times[-r,r]))$-connected components that
    intersect $[x_i,x_{i+1}]\times\{r\}$.
    Now notice that:
    \begin{itemize}
        \item $\calV_i(\calR_0)\cap\calZ_{126}(\calQ)\subset\calZ_i(\calE^i)$ for any $i\in\{1,3,5,\ldots,125\}$,
        since the $\calQ$-event forces the $\calR_0$-crossing (which is also an $\calE^i$-crossing) to appear on its left,
        \item If $\calZ_{n-1}(\calQ)\cap\calV_{n}(\calR_0)\cap\calZ_{n+1}(\calQ)$ occurs for some $n\in\{1,3,5,\ldots,125\}$, then $\calE^i\cap\calE^j=\emptyset$ are disjoint
        for $i<n<j$.
    \end{itemize}
    Therefore the zigzag event in the statement of the lemma is contained in the event
    \[
        E':=\left(\bigcap_{i\in\{1,5,9,\ldots,125\}}\calZ_i(\calE^i)\right)\cap \left\{
        \text{$\calE^i\cap\calE^j=\emptyset$ for any distinct $i,j\in\{1,5,9,\ldots,125\}$}
        \right\}.
    \]
    It suffices to show that $\mu_{\Lambda}^\xi[E'|E]\leq 1/2$.

    The proof runs as follows.
    First, explore the sets $\calE^i$ for $i\in\{1,9,17,\ldots,121\}$,
    inducing boundary conditions at height $0$ on the left and right of each connected component.
    Next, explore the $\calE_{\leq 1}$-connected components starting from the top and bottom of the rectangle $R:=[a-r_1,a'+r_1]\times[-r,r]$,
    inducing boundary conditions at height $\leq 1$ on the top and bottom.
    These connected components are small thanks to the conditioning on the event $E$.
    This generates $15$ domains between the $\calE^i$-components.
    If the event $E'$ occurs, then each of these domains contains a vertical $\calE_{\leq 0}$-crossing.
    We prove that this probability is upper bounded by $1/2$,
    by distinguishing two cases based on the shape of the explored domains.

    To make this more precise, introduce the new random variable
    \[
        t_i:=\text{the $x$-coordinate of the leftmost point in $\calE^i$ (cf.\ Figure~\ref{fig:step_dii}).}
    \]
    If $E'$ occurs, then we must clearly have
    \[
        t_1<t_9<t_{17}<\cdots<t_{121}\leq a-r_3.
    \]
    
    \begin{figure}
      \centering
      \includegraphics{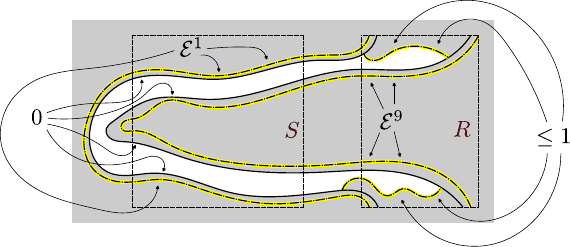}
      \caption{Proof of Lemma~\ref{lemma:pushing_upper_zigzag}, Case (i).
      The percolation $\calE^9$ reaches so far to the left,
        that the unexplored domain traverses the square $S$, enabling the application of Lemma~\ref{lemma:symdom_new} (cf. Remark~\ref{remark:generalisation}).}
      \label{fig:step_di}
    \end{figure}

    \textbf{Case (i): $t_9\leq a-4r$ (Figure~\ref{fig:step_di}).}
    The induced domain between $\calE^1$ and $\calE^9$ is so wide that we may apply Lemma~\ref{lemma:symdom_new} (cf. Remark~\ref{remark:generalisation}) to it directly, yielding that the probability of a vertical $\calE_{\leq 0}$-crossing in this domain (necessary for $\calZ_5(\calE^5)$) is at most $1/2$.

    \begin{figure}
      \centering
      \includegraphics{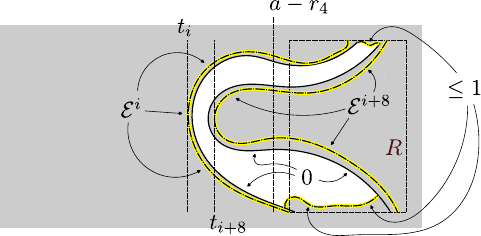}
      \caption{Proof of Lemma~\ref{lemma:pushing_upper_zigzag}, Case (ii).
      By choosing $i$ properly,
      we may apply Lemma~\ref{lemma:swirl} (cf. Remark~\ref{remark:generalisation})
      to get the desired bound $1/2$ on the probability of a vertical $\calE_{\leq 0}$-crossing.}
      \label{fig:step_dii}
    \end{figure}

    \textbf{Case (ii): $t_9> a-4r$ (Figure~\ref{fig:step_dii}).}
    We may find an index $i=9,17,25,\ldots,113$ such that
    \[
        |t_{i+8}-t_i|\leq |t_{i+8}-(a-3r_5)|.
    \]
    In that case, the induced domain between $\calE^{i}$ and $\calE^{i+8}$
    satisfies exactly the constraints of Lemma~\ref{lemma:swirl} below
    (this lemma is a variation of Lemma~\ref{lemma:symdom_new}), which implies that the
    probability of a vertical $\calE_{\leq 0}$-crossing (necessary for $\calZ_{i+4}(\calE^{i+4})$) is at most $1/2$.
\end{proof}

\begin{figure}
    \includegraphics{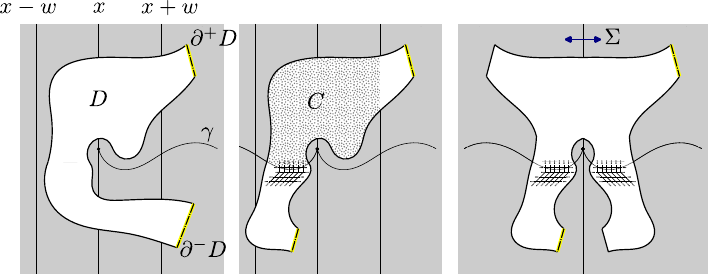}
    \caption{Lemma~\ref{lemma:swirl}. Left: Statement. Others: Parts of the proof.
    }
    \label{fig:swirl}
\end{figure}

The following result is a corollary of Lemma~\ref{lemma:symdom_general}.
It says that in Figure~\ref{fig:swirl}, Left, the probability that $\calE^\cts_0$
connects the two yellow areas, is at most $1/2$.
The statement is technical; this is necessary to rule out pathological cases.

\begin{lemma}[cf. Figure~\ref{fig:swirl}]
    \label{lemma:swirl}
    Let $D\subset\R^2$ be the image of the open unit disk under some homeomorphism from $\R^2$ to itself.
    Let $\partial^\pm D$ denote two disjoint closed arcs of $\partial D$.
    Fix $x\in\Z$ and $w\in\Z_{\geq 0}$, and suppose that all of the following hold true:
    \begin{itemize}
        \item $D\subset [x-w,\infty)\times\R$,
        \item $\partial^+ D\cup \partial^- D\subset [x+w,\infty)\times\R$,
        \item There exists some continuous injective curve $\gamma:[0,\infty)\to [x,\infty)\times\R$,
        such that:
        \begin{itemize}
            \item $\gamma(0)\in\{x\}\times\R$,
            \item $\lim_{t\to\infty}|\gamma(t)|=\infty$,
            \item $\partial^-D$ and $\partial^+ D$ lie below and above $\gamma$ respectively.
        \end{itemize}
    \end{itemize}
    Let $\Lambda:=D\cap \C$ and $\xi(u):=1[u\in \partial^+D\cup\partial^-D]$.
    Then
    \[
        \mu_{\Lambda}^\xi(\{\text{$\calE^\cts_0$ connects $\partial^-D$ to $\partial^+D$}\})
        \leq \frac{1}{2}.
    \]
\end{lemma}

\begin{proof}
    We first give an almost-proof which is correct unless the shape of $D$ is pathological.
    We then correct the mistake in our proof so that it works in full generality.
    Our intuition is guided by Figure~\ref{fig:swirl}, which illustrates a non-pathological case.

    Let $\Sigma$ denote the reflection symmetry over the vertical line $\{x\}\times\R$,
    and write $\bar\gamma:=\gamma\cup\Sigma\gamma$.
    Notice that the union of $\Lambda$ with its reflection $\Sigma\Lambda$ exhibits some geometrical pathologies:
    for example, it is not simply-connected.
    Now embed $\Lambda$ in $\R^2$ in a different way:
    we find some homeomorphism $\phi:\R^2\to\R^2$ which is the identity on
    the area above $\bar\gamma$, and such that $\phi(\Lambda)$ does not contain
    any point below the curve $\gamma$.
    We think of the homeomorphism $\phi$ as ``pushing the space below $\bar\gamma$ to the left'';
    see Figure~\ref{fig:swirl}, Middle.
    One may now apply Lemma~\ref{lemma:symdom_general} to the triple $(\phi(\Lambda),\xi\circ\phi^{-1},\Sigma)$.
    In Figure~\ref{fig:swirl} we see that the union of $\phi(\Lambda)$ with the reflected domain
    remains simply-connected, so that $\{\text{$\calE^\cts_0$ connects $\partial^-D$ to $\partial^+D$}\}$
    is disjoint from the event involving $\calQ$ in Lemma~\ref{lemma:symdom_general}.
    This gives the upper bound $1/2$ in the non-pathological case.
    We stress that Section~\ref{section:lupu} and Lemma~\ref{lemma:symdom_general}
    indeed apply to this setup, since the arguments are not sensitive to the choice of the underlying planar graph.

    The above argument may fail when $\partial^-D$ and $\partial^+ D$ are not contained in the outer boundary of $\phi(D)\cup\Sigma\phi(D)$,
    which may happen when $D$ has pathological ``tentacles'' winding around $D$ itself.
    We now present an adaptation of the above argument which works in full generality.
    Let $A$ denote the intersection of $[x-w,x+w]\times\R$ with the region above $\bar\gamma$,
    and let $C$ denote the connected component of $D\cap A$ adjacent to the connected component of $D\setminus A$ containing $\partial^-D$ (the dotted region in Figure~\ref{fig:swirl}, Middle).
    We may then find a homeomorphism $\phi$ which ``reduces'' all ``tentacles'' (connected components of $D\setminus C$) incident to $C$ to tiny bumps.
    More precisely, we require that the homeomorphism $\phi:\R^2\to\R^2$ satisfies the following:
    \begin{itemize}
        \item $\phi$ acts as the identity on $C$,
        \item $\phi(\partial^-\Lambda)$ lies below $\Sigma\gamma$,
        \item $\phi(\partial^+\Lambda)$ lies above $\gamma$,
        \item $\phi(\partial^\pm\Lambda)$ are contained in the outer boundary of $\phi(D)\cup\Sigma\phi(D)$.
    \end{itemize}
    It is straightforward to see that such a homeomorphism $\phi$ exists.
    The end of the proof is then the same as in the non-pathological case above.
\end{proof}

\subsection{Completing the proof of Lemma~\ref{lemma:pushing_new}}
\label{subsec:proof_lemma_pushing_new}

This subsection is mostly a technical repetition of ideas
in the proof of Lemma~\ref{lemma:cg1_new};
it may be skipped on a first read.

\begin{proof}[Proofs of Lemma~\ref{lemma:pushing_simpler} and Lemmas~\ref{lemma:pushing_new}]
    As explained above,
    Lemmas~\ref{lem:alternating_crossing}--\ref{lemma:pushing_upper_zigzag} imply
    Lemma~\ref{lemma:pushing_simpler}
    where $p<1$ is the solution to the equation
    $1-127\sqrt[1152]{1-p}=1-\cfirstcoarsegrainingnew/2$.

    Now assume the setting of Lemma~\ref{lemma:pushing_new}.
    To prove Equation~\eqref{eq:push_new_target},
    it suffices to find a constant $c>0$ such that
    \begin{equation}
      \label{eq:push_new_target_simplified}
                  \mu_{\Lambda}^\xi\left[
                \left\{\text{$\calR_0(\partial\Lambda)$ does not intersect $[-w,w]\times[-r,r]$}\right\}
            \right]
            \geq c^{w/r}.
    \end{equation}
    Indeed, this inequality implies the lemma with
    $\cpush=c\cdot \cfirstcoarsegrainingnew$,
    by simply exploring $\calR_0(\partial\Lambda)$ (inducing zero boundary conditions),
    and then applying Lemma~\ref{lemma:cg1_new}.
    Our objective is to establish Equation~\eqref{eq:push_new_target_simplified}.
    
    We shall first establish Equation~\eqref{eq:push_new_target_simplified}
    under the additional assumption that
    the domain satisfies $\Lambda\subset [-w,w]\times[-w/1000,w/1000]$
     (that is,
    it is much wider than it is tall).
    We do so in three steps.
    \begin{enumerate}
        \item First, we use Lemma~\ref{lemma:cg1_new} to argue that
        the ones are located in a thin strip, without loss of generality.
        This is an important requirement for the application of Lemma~\ref{lemma:pushing_simpler}.
        \item Then, we iterate Lemma~\ref{lemma:pushing_simpler}
        to constrain $\calR_0(\partial\Lambda)$ more and more.
        This argument handles all scales except for the smallest ones.
        \item Finally, the smallest scales are handled via another argument.
    \end{enumerate}

    \textbf{Step 1.}
    By Lemma~\ref{lemma:cg1_new} (cf. Remark~\ref{remark:generalisation}),
    with probability at least $(\cfirstcoarsegrainingnew)^{10^6 w/r}$,
    the strip $T=\R\times [\lfloor 2r\rfloor,\lceil(\frac{2001}{2000}r)\rceil]$
    is \emph{not} crossed vertically by $\calE_{\leq 1}$.
    Conditional on this event, we may explore $\calE_{\leq 1}$
    starting from the top of $T$, inducing boundary conditions at heights $\leq 1$
    on the exploration boundary.

    Thus, it suffices to find a constant $c'>0$
    such that 
        \begin{equation}
      \label{eq:push_new_target_simplified2}
                  \mu_{\Lambda}^\xi\left[
                \left\{\text{$\calR_0(\partial\Lambda)$ does not intersect $[-w,w]\times[-r,r]$}\right\}
            \right]
            \geq (c')^{w/r}.
    \end{equation}
    for any $(\Lambda,\xi)$
    with $\Lambda\subset [-w,w]\times[-w/1000,w/1000\wedge \lceil(\frac{2001}{2000}r)\rceil]$,
    and such that $\{\xi=1\}\subset\R\times[\lfloor 2r\rfloor,\infty)$.
    Equation~\eqref{eq:push_new_target_simplified} then follows with
    $c=c'\cdot (\cfirstcoarsegrainingnew)^{10^6}$.

    \textbf{Step 2.}
    We claim that for $r'\geq 1000\vee (r/10)$, we have
    \begin{equation}
      \label{eq:eqqinfwqionwqfwq}
       \mu_{\Lambda}^\xi
        \Big[
            \{
            \calR_0\subset \R\times[2r-2r',\infty)
        \}\Big|\{
            \calR_0\subset \R\times[2r-3r',\infty)
        \}
        \Big]
        \leq (1-p)^{1000 w/r'}.
    \end{equation}
    Indeed, if the conditioning event holds true,
    then we may explore $\calE_0$ starting from the line
    $\R\times\{2r-3r'\}$.
    This process finishes before hitting $\partial\Lambda$ and induces
    $0$ boundary conditions.
    We may then apply Lemma~\ref{lemma:pushing_simpler} to the domain above the exploration boundary, to obtain the desired inequality.

    Equation~\eqref{eq:eqqinfwqionwqfwq} can be iterated to obtain
    \begin{equation}
      \label{eq:eqwadawdawdqinfwqionwqfwq}
       \mu_{\Lambda}^\xi
        \Big[
            \{
            \calR_0\subset \R\times[2r-(1000\vee (r/10)),\infty)
        \}
        \Big]
        \leq (c'')^{w/r}.
    \end{equation}
    for some appropriately chosen constant $c''>0$ by iterating Equation~\eqref{eq:eqqinfwqionwqfwq}.

    \textbf{Step 3.}
    Notice that Equation~\eqref{eq:eqwadawdawdqinfwqionwqfwq}
    implies Equation~\eqref{eq:push_new_target_simplified2}
    with $c'=c''$ when $r> 1000$.
    When $r\leq 1000$, we must find a replacement for Lemma~\ref{lemma:pushing_simpler}
    in the above argument to handle the smallest scales.
    This is straightforward, as single edges are closed with probability at least $1/2$ via Remark~\ref{remark:dwaionawionodiwanawd} below.
    Details are left to the reader.

    We have now established Equation~\eqref{eq:push_new_target_simplified}
    under the  assumption that
    $\Lambda\subset [-w,w]\times[-w/1000,w/1000]$,
    for an appropriately chosen $c>0$.
    Finally, one may extend to the case that $w$ is smaller than $r$ by repeating ideas in the proof of Lemma~\ref{lemma:cg1_new} (``Proof for arbitrary $w\geq 1$''
    on Page~\pageref{page:wadionawiodnawd}).
\end{proof}

\begin{remark}
    \label{remark:dwaionawionodiwanawd}
    Consider Lemma~\ref{lemma:symdom_general}.
    Suppose that $\Sigma$ denotes rotation over the midpoint of some edge $xy$.
    Then $\mu_{\Lambda}^\xi[\{xy\in\calR_0\}]\leq 1/2$.
\end{remark}

\begin{proof}
    The events $\{xy\in\calR_0\}$ and $ \{xy\in\calQ\}$
    are disjoint, from which the bound follows.
\end{proof}

\section{Proof of Lemma~\ref{lemma:preface_second_coarse_graining}}
\label{sec:cg2proof}

\begin{proof}[Proof of the interface coarse-graining inequality (Lemma~\ref{lemma:preface_second_coarse_graining})]
    Fix $n$ and $k$.
    Fix $\Lambda_{20kn}\subset\Lambda\subset\Lambda_{40kn}$
    and consider $\mu_{\Lambda}$.
    The proof idea is to show that, ``conditional'' on the event
    $\CIRCUIT(\calL_1,20kn)$ in the definition of $p_{20kn}$,
    with a probability of at least $(\cdicho)^k$,
    exactly $k$ copies of ``the event $p_n$'' occur in an ``independent''
    (geometrically separated) fashion.
    This implies Equation~\eqref{eq:original_second_coarse_graining}.
    The geometrical argument is almost identical to the argument in~\cite{Duminil-CopinSidoraviciusTassion_2017_ContinuityPhaseTransition},
    except that we prove the inequality for general $k$ (and not just $k=2$)
    thanks to a slight optimisation, appearing in the proof of Equation~\eqref{renorm_claim_2},
    Step~2.

    Let us first introduce some notation to formalise the notion of ``$k$ independent copies''.
    More precisely, in this proof only, we shall write 
    \begin{align}
        \calC_0 := \cap_{\ell=0}^{k-1}
        \Big\{
            \text{%
                $([-2n,2n]^2\setminus[-\tfrac{3}{2}n,\tfrac{3}{2}n]^2)+(20\ell n)$
                contains a $\calL_0$-circuit%
            }
        \Big\};
        \\
        \calC_1 := \cap_{\ell=0}^{k-1}
        \Big\{
            \text{%
                $([-\tfrac32n,\tfrac32n]^2\setminus[-n,n]^2)+(20\ell n)$
                contains a $\calL_1$-circuit%
            }
        \Big\}.
    \end{align}
    Claim that there exist universal constants $c_A,c_B>0$ such that
    \begin{gather}
        \label{renorm_claim_1}
        \mu_{\Lambda}(\calC_1|\CIRCUIT(\calL_1,20kn))\geq c_A^k,
        \\
        \label{renorm_claim_2}
        \mu_{\Lambda}(\calC_0|\calC_1)\geq c_B^k,
        \\
        \label{renorm_claim_3}\mu_{\Lambda}( \calC_1|\calC_0)
        \leq p_n^k.
    \end{gather}
    The claim implies the lemma with $\cdicho=c_Ac_B$ because~\eqref{renorm_claim_1} and~\eqref{renorm_claim_2}
    imply that
    \[
        \sup_{\Lambda_{20kn}\subset\Lambda\subset\Lambda_{40kn}}
        \mu_{\Lambda}(\calC_1\cap \calC_0)\geq p_{20kn}\cdot c_A^kc_B^k,
    \]
    while~\eqref{renorm_claim_3} says that
    \[
        \sup_{\Lambda_{20kn}\subset\Lambda\subset\Lambda_{40kn}}
        \mu_{\Lambda}(\calC_1\cap \calC_0)
        \leq
        p_n^k.
    \]

    The proof of Equations~\eqref{renorm_claim_1} and~\eqref{renorm_claim_3} is straightforward
    (and relies on Lemma~\ref{lemma:cg1_new} and the basic properties in Section~\ref{section:lupu});
    Equation~\eqref{renorm_claim_2} is the crux of the proof:
    it is more complicated and relies on interface minimisation
    (Lemmas~\ref{lemma:symdom_new} and~\ref{lemma:pushing_new} are also used in this proof).

    \begin{figure}
        \includegraphics{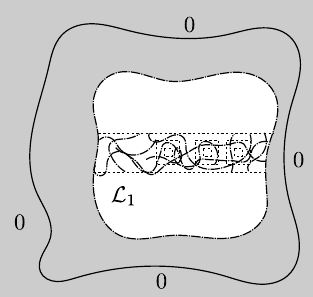}
        \hspace{4em}
        \includegraphics{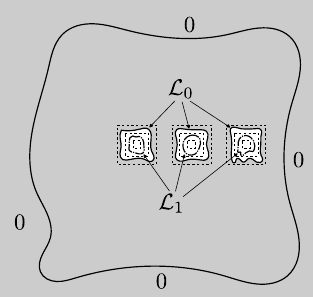}
        \caption{Illustrations not to scale.
            In both illustrations, the white region marks the domain $\Lambda'$
            resulting from the exploration.
            \textsc{Left}: Illustration of Equation~\eqref{renorm_claim_1}.
        The small annuli are crossed when $\calL_1$ percolates well within the rectangle.
        \textsc{Right}: Illustration of Equation~\eqref{renorm_claim_3}. Exploring the outermost $\calL_0$-circuits in each large annulus induces the Markov property.}
        \label{fig:renorm_claim_1}
    \end{figure}

    \begin{proof}[Proof of Equation~\eqref{renorm_claim_1} (cf.~Figure~\ref{fig:renorm_claim_1}, \textsc{Left})]
        The idea is to explore the $\calL_1$-circuit (in the conditioning event) from the outside using
        Lemma~\ref{lemma:exploration},
        inducing boundary heights $1$, so that we may apply Lemma~\ref{lemma:cg1_new}.

        Recall Lemma~\ref{lemma:exploration};
        consider the measure $\mu_{\Lambda}$,
        the starting set $S=\partial\Lambda$,
        and the target set $\calL^\cts_1$.
        Let $(\Lambda',\xi')$ denote the outcome of the exploration.
        This means that $\Lambda':=\Lambda\setminus\bar\calR_1(\partial\Lambda)$
        and $\xi'\equiv 1$.
        From the definition of the event $\CIRCUIT(\calL_1,20kn)$,
        it is easy to see that 
        \[
            \CIRCUIT(\calL_1,20kn) = \{\Lambda_{20kn}\subset\Lambda'\subset\Lambda_{40kn}\}.
        \]
        Thus, Lemma~\ref{lemma:exploration} implies
        \[
            \mu_{\Lambda}(
            \calC_1
            |\CIRCUIT(\calL_1,20kn))
            =
            \mu_{\Lambda}\Big(
            \mu_{\Lambda',1}[\calC_1]
            \Big|\{\Lambda_{20kn}\subset\Lambda'\subset\Lambda_{40kn}\}\Big).
        \]
        It suffices to prove that for fixed such $\Lambda'$,
        we may suitably lower bound
        \(
            \mu_{\Lambda',1}(\calC_1).
        \)
        By choice of $\Lambda'$,
        this quantity is lower bounded by $(\cfirstcoarsegrainingnew)^{2000k}$
        via Lemma~\ref{lemma:cg1_new} (applied at height $1$ rather than $0$).
        Setting $c_A=(\cfirstcoarsegrainingnew)^{2000}$, this yields
         Equation~\eqref{renorm_claim_1}.
    \renewcommand\qedsymbol{}
    \end{proof}

    \begin{proof}[Proof of Equation~\eqref{renorm_claim_3} (cf.~Figure~\ref{fig:renorm_claim_1}, \textsc{Right})]
        The idea is to explore the $k$ distinct $\calL_0$-circuits (in the conditioning event) from the outside,
        then use the Markov property and the definition of $p_n$ for the desired bound.
        Let $(\Lambda',\xi')$ be the outcome of the exploration process from the starting set $\cup_{\ell=0,\ldots,k-1}(\partial\Lambda_{2n}+20\ell n)$ to the stopping set $\calL_0^\cts$ in $\mu_{\Lambda}$.
        Then
        \[
            \xi'\equiv 0;\qquad
            \calC_0 = \{\cup_{\ell=0,\ldots,k-1}(\Lambda_{3n/2}+20\ell n)\subset\Lambda'\}.
        \]
        It suffices to prove that for any such $\Lambda'$,
        we have
        \[
            \mu_{\Lambda'}(\calC_1)\leq p_n^k.
        \]
        For each $\ell=0,\ldots,k-1$,
        the set $\Lambda'$ contains a distinct connected component $K_\ell$
        with
        \[
        \Lambda_{ n}+20\ell n\subset K_\ell \subset \Lambda_{2n}+20\ell n.
        \]
        By the Markov property (Theorem~\ref{theorem:interpolation_properties}) and the definition of $p_n$, we get
        \[
            \mu_{\Lambda'}(\calC_1)
            =
            \prod_{\ell=0}^{k-1} \mu_{K_\ell}(\CIRCUIT_{(20\ell n,0)}(\calL_1,n))
            \leq
            p_n^k.
        \]
        This finishes the proof of Equation~\eqref{renorm_claim_3}.
        \renewcommand\qedsymbol{}
    \end{proof}

    Now that Equations~\eqref{renorm_claim_1} and~\eqref{renorm_claim_3}
    have been established, it suffices to prove Equation~\eqref{renorm_claim_2}.

    \begin{figure}
        \includegraphics{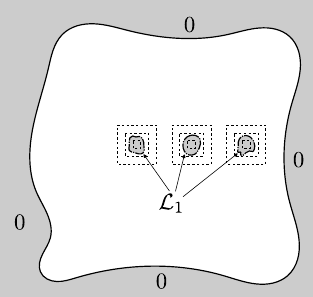}
        \hspace{4em}
        \includegraphics{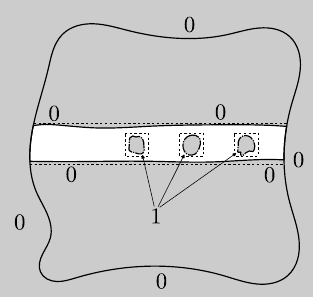}
        \caption{Proof of Equation~\eqref{renorm_claim_2}. \textsc{Left}: Step~1. \textsc{Right}: Step~2.}
        \label{fig:renorm_claim_2a}
    \end{figure}

    \begin{proof}[Proof of~\eqref{renorm_claim_2}]
        \emph{Step~1: Preparation (cf.~Figure~\ref{fig:renorm_claim_2a}, \textsc{Left})}. Explore the $k$ distinct $\calL_1$-circuits from the inside (the starting set $\cup_{\ell=0}^{k-1}(\partial\Lambda_{n}+(20\ell n))$)
        using Lemma~\ref{lemma:exploration},
        inducing a new pair $(\Lambda',\xi')$.
        Notice that $\xi'$ equals $1$ on $\partial\Lambda'\setminus\partial\Lambda$,
        the boundary of the exploration,
        and $0$ on $\partial\Lambda$.
       By definition of $\calC_1$, we have
        \[
            \calC_1 = \big\{\Lambda\setminus\Lambda'\subset\cup_{\ell=0}^{k-1}(\Lambda_{3n/2}+20\ell n)\big\}.
        \]
        For \eqref{renorm_claim_2}
       it suffices to prove that for any such $(\Lambda',\xi')$,
        we have
        \begin{equation}
            \label{eq:renorm_claim_2a}
            \mu_{\Lambda'}^{\xi'}(\calC_0)\geq c_B^k.
        \end{equation}
        This is the end of Step~1.

        Equation~\eqref{renorm_claim_2} would now be easy to prove if it were true that $\xi'\equiv 0$ (using Lemma~\ref{lemma:cg1_new});
        the difficulty stems from the fact that $\xi'$ equals $1$ on the ``hole boundaries''.
        This negative effect is encoded in terms of the boundary excursion set $\calR^\cts_0=\calR^\cts_0(\partial\Lambda')$.
        In Steps 2--4, we gradually constrain the set $\calR^\cts_0$ to a smaller and smaller geometrical region.
        In Step~5, we conclude the proof by showing that,
        once $\calR^\cts_0$ has been sufficiently constrained,
        the event $\calC_0$ occurs with a good probability via an application of Lemma~\ref{lemma:cg1_new}.

        \emph{Step~2: Application of the pushing lemma (cf.~Figure~\ref{fig:renorm_claim_2a}, \textsc{Right})}.
        Consider $\mu_{\Lambda'}^{\xi'}$, and introduce the events
        \begin{gather}
            \calV_-:=\{\text{$[-40kn,40kn]\times[-2n,-7n/4]$ contains a vertical $\calE_0$-crossing}\};
            \\
            \calV_+:=\{\text{$[-40kn,40kn]\times[7n/4,2n]$ contains a vertical $\calE_0$-crossing}\}.
        \end{gather}
        The pushing lemma (Lemma~\ref{lemma:pushing_new})
        and FKG for absolute heights (Lemma~\ref{lemma:monotonicity_abs})
        imply
        \[
            (\cpush)^{1280k} \leq \mu_{\Lambda'}^{\xi'}[\calV_-^c]\cdot\mu_{\Lambda'}^{\xi'}[\calV_+^c]
            \leq \mu_{\Lambda'}^{\xi'}[\calV_-^c\cap\calV_+^c].
        \]
        Now run an exploration started from $\partial\Lambda_{40kn,2n}$ with target set $\calL_0^\cts$.
        Let $(\Lambda'',\xi'')$ be the outcome of this exploration process,
        after removing all connected components of the domain
        that do not intersect $\Lambda_{40kn,7n/4}$.
        Then
        \[
            \calV_-^c\cap\calV_+^c
            =
            \big\{
                \Lambda '' \cap \Lambda_{40kn,7n/4}
                =
                \Lambda ' \cap \Lambda_{40kn,7n/4}
            \big\}.
        \]
        To prove~\eqref{eq:renorm_claim_2a},
        it suffices to obtain $\mu_{\Lambda''}^{\xi''}[\calC_0]\geq (c_B')^k$
        for some universal constant $c_B'>0$, and set $c_B:=(\cpush)^{1280}\cdot c_B'$.
        We are now in the following situation:
        \begin{itemize}
            \item $(\Lambda_{20kn,7n/4}\setminus\cup_{\ell=0}^{k-1}(\Lambda_{3n/2}+(20\ell n,0)))\subset\Lambda''\subset \Lambda_{40kn,2n}$,
            \item $\xi''$ equals zero on the ``outer boundary'' of $\partial\Lambda''$ and one on the ``hole boundaries'',
            \item It suffices to prove $\mu_{\Lambda''}^{\xi''}(\calC_0)\geq (c_B')^k$ for any such $(\Lambda'',\xi'')$.
        \end{itemize}
        % To prove this desired inequality, we shall demonstrate that for any $\ell = 1,\ldots,k$, we get
        % \[\mu_{\Lambda''}^{\xi''}\Big(\CIRCUIT^{\ell}(\calL_0,2n)\Big|\CIRCUIT^{\ell-1}(\calL_0,2n)\Big)\geq c_B'.\]

    \begin{figure}
        \includegraphics{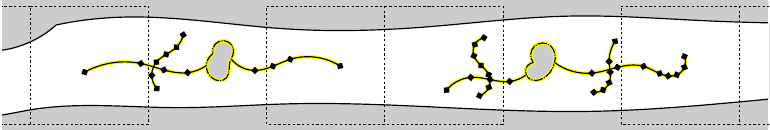}
        \includegraphics{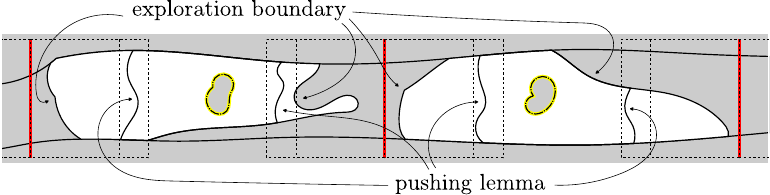}
        \includegraphics{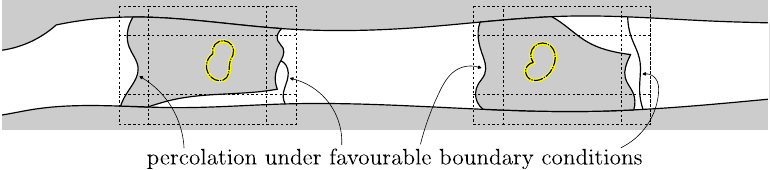}
        \caption{\textsc{Top}: Step~3. \textsc{Middle}: Step~4. \textsc{Bottom}: Step~5.}
        \label{fig:narrow1}
    \end{figure}

        \emph{Step~3: Application of the symmetric domain lemma (cf.~Figure~\ref{fig:narrow1}, \textsc{Top}).}
        Consider the measure $\mu_{\Lambda''}^{\xi''}$ described above.
        The probability that the square $S_{\ell,\pm}:=[-2n,2n]^2 + (20\ell n,0)\pm(5n,0)$
        is \emph{not} crossed horizontally by $\calR_0$ is at least $1/2$,
        by the symmetric domain lemma (Lemma~\ref{lemma:symdom_new}).
        If no such squares is crossed, then $\calR^\cts_0\subset\calQ_{7}$,
        where $\calQ_m:=\cup_{\ell=0}^{k-1}\Lambda_{mn,2n}+(20\ell n,0)$.
        By the FKG lemma for absolute heights (Lemma~\ref{lemma:monotonicity_abs}),
        we get
        \[
            \mu_{\Lambda''}^{\xi''}(\{
                \text{no $S_{\ell,\pm}$ is crossed by $\calR_0$}
                \}
            )
            =
            \mu_{\Lambda''}^{\xi''}(
                \{\calR^\cts_0\subset\calQ_{7}\}
            ) \geq (1/2)^{2k}.
        \]
        This ends Step~3.
        
        The remaining two steps consist in pushing the interfaces closer to the small annuli,
        and making sure that the percolation $\calL_0$ is in the correct position.
        These steps are fairly simple.

        \emph{Step~4: Pushing the interfaces closer to the small annuli (cf.~Figure~\ref{fig:narrow1}, \textsc{Middle}).}
        We claim that for some constant $c_B''>0$, we get
        \[
            \mu_{\Lambda''}^{\xi''}\Big(
                \{\calR^\cts_0\subset\calQ_3\}
                \Big|\{\calR^\cts_0\subset\calQ_{7}\}
            \Big)\geq (c_B'')^k.
        \]
        To obtain this inequality,
        we first start an exploration at $\partial\calQ_{7}$ with target set $\calL_0$.
        If the conditioning event occurs, then this exploration stops before hitting
        the set $\{\xi''=1\}$ (the ``hole boundaries'').
        See the figure for this exploration; the white set is the unexplored set as per usual.
        The induced boundary conditions are now good enough to invoke the pushing lemma (Lemma~\ref{lemma:pushing_new}),
        guaranteeing that the boundary excursions are sufficiently close to the ``holes''.
        This yields Step~4 with $c_B''=(\cpush)^{1280}$.
    
        \emph{Step~5: Final positioning of $\calL_0$ (cf.~Figure~\ref{fig:narrow1}, \textsc{Bottom}).}
        Finally, we claim that for some constant $c_B'''>0$, we get
        \begin{equation}
            \label{eq:final_step_claim111}
            \mu_{\Lambda''}^{\xi''}\Big(
                \calC_0
                \Big|\{\calR^\cts_0\subset\calQ_3\}
            \Big)\geq (c_B''')^k.
        \end{equation}
        Combining Steps~3--5, this yields the desired result (end of Step~2)
        with $c_B':= (1/2)^{2}\cdot c_B''\cdot c_B'''$.

        We now prove Equation~\eqref{eq:final_step_claim111}. We want that $\calL_0$ contains an annulus circuit in each of the $k$ small annuli.
        For each annulus, the bottom and top rectangle are already crossed horizontally because of the boundary conditions at height $0$.
        We must ensure that the left and right rectangle of each annulus are crossed vertically by a $\calL_0$-path connecting the horizontal crossings.
        By exploring the set $\calL_0$ from the \emph{inside}, starting from each hole, we know that we encounter a circuit $\calL_0$
        very quickly (before leaving the set $\calQ_3$).
        However, the problem is now that the this circuit may be \emph{too close} to the hole.
        But the resulting boundary conditions have a boundary height of zero everywhere,
        so that we may simply apply Lemma~\ref{lemma:cg1_new} to ensure that each vertical rectangle is
        crossed with a probability of at least $\cfirstcoarsegrainingnew$.
        This proves Equation~\eqref{eq:final_step_claim111} with $c_B'''=(\cfirstcoarsegrainingnew)^2$.
        \renewcommand{\qedsymbol}{}
    \end{proof}
    This completes the proof of the interface coarse-graining inequality.
\end{proof}

\part{Extension to general potentials}
\label{part:extensions}

\section{Extension to the dual height function of the XY model}
\label{section:extension_poisson_XY}

The square potentials (the class $\Psi$)
are dual to the Villain model.
Above, we used an infinite divisibility property of the square potential to interpolate with a Brownian motion,
which simplified the technical aspects of the proof.
The Poisson potentials $V(a)=-\log I_a(\beta)$
are dual to the XY model.
The Poisson potential has a similar infinite divisibility property, which allows us to interpolate with a continuous-time random walk on $\Z$.
Indeed, recall that $e^{-V(a)}$ is proportional to $\P[\{X-Y=a\}]$,
where $X,Y\sim\operatorname{Poisson}(\beta/2)$.
This means that we can interpolate with a continuous-time random walk on $\Z$,
with a jump rate of $\beta/2$ for going up and a jump rate $\beta/2$
for going down.
The model can thus be defined in any continuum domain $\Lambda\subset\C$
and for any boundary condition $\xi$ on $\partial\Lambda$,
by simply sampling the jumps from a Poisson point process,
and then conditioning that they define a consistent height function with boundary
condition $\xi$.
The entire analysis can be performed in the exact same way,
after the following simple observations.
\begin{itemize}
    \item The intermediate value theorem holds true at integer values,
    since the height function is integer-valued and the Poisson jumps always have
    size one.
    \item The flip symmetry and the Markov property over continuum domains is immediate from the description above.
    \item For any $\beta$ and any continuum domain $\Lambda$,
    the distributions of $h|_O$ and $|h||_O$ satisfy the FKG lattice condition for a
    sufficiently dense observation set $O$, thanks to~\cite{EngelenburgLis_2023_ElementaryProofPhase}
    (which observes that the potentials between the observation points then belong to the class
    described in~\cite{LammersOtt_2024_DelocalisationAbsolutevalueFKGSolidonsolid}).
\end{itemize}

\begin{remark}
The analysis was performed for the square potential first, for two reasons.
First, the Brownian interpolation is well-known in the mathematics literature since the work of Lupu.
Second, the Brownian motion is continuous, while the Poisson interpolation is not.
This leads to the technical nuisance that the height is not unambiguously defined at the jump points,
and we would have to consider two different heights at those points (thought of as left and right limits).
This issue, which is purely technical in nature, is absent in the case of the square potential.
\end{remark}

\section{Extension to all potentials in $\Phi$}
\label{section:extension_general}

We now want to extend to all potentials in $\Phi$,
where a continuous-time interpolation is not (generally) available.
By giving an appropriate alternative definition of the level lines $(\calL_a)_a$,
one can run the entire argument without modifications.

We proceed as follows:
\begin{itemize}
    \item First, we give a description of $(\calL_a)_a$ in the context of $\mu^\dscrt_\Lambda$ (Definition~\ref{def:height_functions}),
    \item Then, we formally define the level lines $(\calL_a)_a$ in the desired generality,
    \item Finally, we derive a few key properties, which enable us to extend the result.
\end{itemize}

\subsection{Informal description of the level lines}
Recall the definition of the height function measure $\mu^\dscrt_\Lambda$ (Definition~\ref{def:height_functions};
in this measure, the integer heights are defined at the vertices, but there is no additional structure).
The potential functions $V$ are convex and symmetric, and therefore we have the following property:
\emph{conditional} on $|h-a|$ (for some $a\in\Z$), the distribution of $h-a$ behaves like
a ferromagnetic Ising model, with explicit coupling constants in terms of $|h-a|$ and $V$.
We may therefore consider the Fortuin--Kasteleyn--Ising coupling of the Ising model
with the random cluster model, and write $\calE_a$ for the set of Fortuin--Kasteleyn--Ising edges.
For each $a\in\Z$, the yields a unique coupling between $h$ and $\calE_a$.
These edges have the following properties:
\begin{itemize}
    \item If $xy\in\calE_a$, then $h_x$ and $h_y$ are on the same side of $a$,
    \item Conditional on the edges $\calE_a$, the distribution of $h-a$ is obtained
    by flipping a fair coin for each cluster not connected to the boundary
    in order to determine its sign.
\end{itemize}
Let $\calL_a$ denote the set of dual edges of $\calE_a$.
The two properties above imply the following.
\begin{itemize}
    \item \textbf{Intermediate value property.}
    If $h-a$ changes sign on an edge $xy$, then $xy\not\in\calE_a$ and therefore $\calL_a$ contains the dual edge of $xy$.
    \item \textbf{Flip symmetry.}
    If $\gamma$ is a circuit in the dual graph surrounding some set of vertices $S$,
    then conditional on $\gamma\subset\calL_a$, the distribution of $(h-a)|_S$ is invariant under a sign flip.
\end{itemize}

We refer to~\cite{LammersOtt_2024_DelocalisationAbsolutevalueFKGSolidonsolid} for a detailed description of this coupling in the context of height functions.
Moreover, it is proved in~\cite{LammersOtt_2024_DelocalisationAbsolutevalueFKGSolidonsolid}
that $\calL_a$ satisfies the FKG inequality in $\tilde\mu^\dscrt_\Lambda$
(the coupling between $\mu^\dscrt_\Lambda$ with the Fortuin--Kasteleyn--Ising edges)
for any $a\in\Z$ and in any domain $\Lambda$.
This is the main reason why we can extend our results to the class $\Phi$.

\subsection{Formal definition of the level lines in the desired generality}
Our arguments rely on exploration processes which reveal, in particular,
whether or not edges belong to $\calL_a$.
We now give a formal definition of the level lines $\calL_a$ in a larger setting,
which enables us to make sense of the conditional law following such an exploration process.

We first introduce the notion of \emph{truncated domains}, which are special cases of continuum domains (Definition~\ref{def:continuum_domains}).
Then, we define the height function measure on such truncated domains, which is a natural extension of $\mu^\dscrt_\Lambda$.
Finally, we define the level lines $\calL_a$ in this setting, and derive the key properties mentioned above.

\begin{figure}[b]
  \centering
  \includegraphics{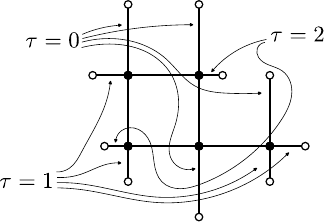}
  \caption{A truncated domain $\Lambda$ with its truncation function $\tau$}
  \label{fig:truncated}
\end{figure}

\begin{definition}[Truncated domains (Figure~\ref{fig:truncated})]
    For any $x,y\in\R^2$, let $L_{xy}$ denote the closed line segment connecting $x$ and $y$ (the convex hull of the two points).
    Consider an arbitrary continuum domain $\Lambda$.
    This continuum domain is called a \emph{truncated domain} if for any square lattice edge $xy\in\E$, the following properties hold true:
    \begin{itemize}
        \item $\Lambda\cap L_{xy}$ contains at most one connected component,
        \item If the intersection is not empty, then either $x\in\Lambda$ or $y\in\Lambda$ (or both),
        \item If the intersection is not empty, then its diameter belongs to $\{1,1/2,1/3,\ldots\}$.
    \end{itemize}

    Now let $\Lambda$ be a truncated domain.
    In this section, we write $\E(\Lambda)$ for the set of unordered pairs of endpoints of line segments of the form $\Lambda\cap L_{xy}$, with $xy\in\E$.
    Remark that this means that $\E(\Lambda)$ is not necessarily a subset of $\E$, due to the truncation.
    The \emph{edge boundary} $\partial_e\Lambda$ of $\Lambda$ is the set of directed edges $xy\in\E(\Lambda)$ such that $y\not\in\Lambda$
    (the edges point outwards).
    For each $xy\in\E(\Lambda)$, we define $\tau_{xy}\in\Z_{\geq 0}$ such that $\|x-y\|_2=1/(\tau_{xy}+1)$.
    This is called the \emph{truncation} of the edge $xy$.
    The definitions imply that $\tau_{xy}=0$ for any edge $xy\in\E(\Lambda)$
    that is not in the edge boundary of $\Lambda$.

    Write $\TBound\subset\Bound$ for the set of boundary conditions $(\Lambda,\xi)$ such that $\Lambda$ is a truncated domain.
    Recall that this means that $\xi:\partial\Lambda\to\Z$, where $\partial\Lambda$ is the topological boundary of $\Lambda$ as a subset of $\C$.
    Write $\TBoundNonneg:=\TBound\cap\BoundNonneg$.
\end{definition}

\begin{definition}[Truncated potentials]
  \label{def:truncated_potential}
  For any $V\in\Phi$ and $\tau\in\Z_{\geq 0}$, let $V_{\tau}\in\Phi$ denote the potential function $V_\tau(a):=V(|a|+\tau)$.
  This is called a \emph{truncated potential function}.
\end{definition}

By going back to the definition of $\Phi$ (Definition~\ref{def:pot}),
it is straightforward to work out that $\Phi$ is indeed preserved under truncation.

From now on, $V$ shall always denote a fixed potential function in $\Phi$.

\begin{definition}[Interpolated height function measure]
  \label{def:interpolated_height_function_measure_general}
    Consider $(\Lambda,\xi)\in\TBound$.
    Then define the \emph{interpolated height function measure} $\mu_{\Lambda}^{\xi}$
    as a probability measure on $(h,\rho)\in\Z^{\Z^2\cap\Lambda}\times[0,\infty)^{\E(\Lambda)}$ with a density proportional to
    \[
    \exp
    -\left[
        \sum_{xy\in\E(\Lambda)}
        \begin{cases}
            \rho_{xy}+
        V_{\tau_{xy}}(h_x-h_y) & \text{if $xy\not\in\partial_e\Lambda$}\\
        \rho_{xy}+V_{\tau_{xy}}(h_x-\xi_y) & \text{if $xy\in\partial_e\Lambda$}
            \end{cases}
    \right].
    \]
    We extend $h$ to $(\Z^2\cap\Lambda)\cup\partial\Lambda$ by setting $h_x=\xi_x$ for $x\in\partial\Lambda$.

    Each edge $xy\in\E(\Lambda)$ has a \emph{residual energy} $\rho_{xy}$, and a \emph{total energy} $T_{xy}$
    defined as the residual energy plus the potential energy, which is $V_{\tau_{xy}}(h_x-h_y)$ for internal edges and $V_{\tau_{xy}}(h_x-\xi_y)$ for boundary edges.
\end{definition}

\begin{remark}
\begin{itemize}
    \item The definition implies immediately that $h$ is independent of $\rho$, and that $\rho$ consists of i.i.d.\ exponential random variables with parameter $1$.
    \item If $\tau\equiv 0$, then all potential functions are equal to $V$, and we recover the original height function measure $\mu^\dscrt_{\Lambda\cap\Z^2}$.
    \item The family $\rho$ contains the independent randomness that we shall use to define the Fortuin--Kasteleyn--Ising edges $\calE_a$ and the level lines $\calL_a$.
    This idea of using residual and total energies goes back to~\cite{Sheffield_2005_RandomSurfaces}.
\end{itemize}
\end{remark}

\begin{definition}[Level lines]
  \label{def:level_lines_general}
    Consider the measure $\mu_\Lambda^\xi$ for some boundary condition $(\Lambda,\xi)\in\TBound$.
    Fix $a\in\Z$.
    Then we define $\calE_a$ as the set of edges $xy\in\E(\Lambda)$ such that
    \begin{equation}
      \label{eq:totalenergycriterion}
      T_{xy} < V_{\tau_{xy}}(|h_x-a|+|h_y-a|).
    \end{equation}
    Write $\calL_a$ for the dual edges of $\calE_a$.
\end{definition}

\begin{remark}[Intermediate value theorem] If $h_x$ and $h_y$ lie on a different side of $a$,
        then
        \[
            V_{\tau_{xy}}(|h_x-a|+|h_y-a|)=V_{\tau_{xy}}(|h_x-h_y|).
        \]
        Since the residual energy is positive,
        Equation~\eqref{eq:totalenergycriterion} cannot hold, and therefore $xy\not\in\calE_a$.
        This guarantees that $\calL_a$ contains the dual edge of $xy$,
        which we interpret as a form of the intermediate value theorem.
\end{remark}

Our exploration processes always take the following form (Figure~\ref{fig:truncated_expl}).
\begin{itemize}
    \item Start from some initial distribution $\mu_\Lambda^\xi$, where $(\Lambda,\xi)\in\TBound$.
    \item Select an edge $xy\in\E(\Lambda)$ with $y\not\in\partial\Lambda$, and reveal $h_x$.
    \item Select a target value $a\in\Z$.
    \item Reveal whether or not $xy\in\calE_a$, and proceed as follows.
    \begin{itemize}
        \item If $xy\in\calE_a$, then $h_y$ is revealed.
        The conditional law of $h$ is then $\mu_{\Lambda'}^{\xi'}$,
        where:
        \begin{itemize}
            \item $\Lambda'\subset\Lambda$ is the largest truncated subdomain
        disjoint from $\{x,y\}$,
        \item $\xi'$ is $\xi$ supplemented with $\xi'_x=h_x$ and $\xi'_y=h_y$,
        and restricted to $\partial\Lambda'$.
        \end{itemize}
        \item If $xy\not\in\calE_a$, then the conditional law of $h$ is given by the lemma below.
    \end{itemize}
\end{itemize}

\begin{figure}
  \centering
  \includegraphics{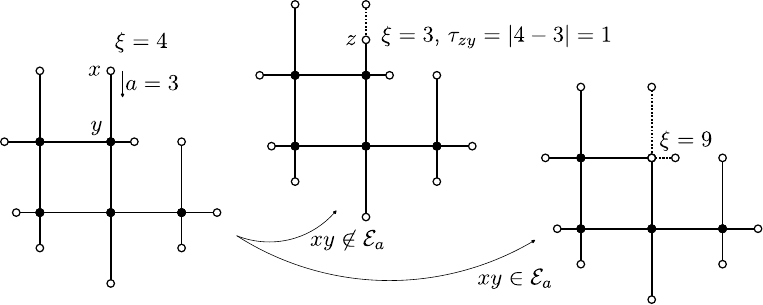}
  \caption{An exploration process revealing whether or not $xy\in\calE_a$.
  If the edge belongs to $\calE_a$, then the two values lie on the same side of $a$, and we choose to reveal both heights.
  If the edges does not belong to $\calE_a$, then we choose not to reveal the height on the other edge endpoint.
  This induces a new law with a domain that is further truncated (Lemma~\ref{lem:conditional_law_after_revealing_level_line_edge}).
  }
  \label{fig:truncated_expl}
\end{figure}

\begin{lemma}[Conditional law after revealing an $\calL_a$-edge]
  \label{lem:conditional_law_after_revealing_level_line_edge}
    Consider the measure $\mu_\Lambda^\xi$ for some boundary condition $(\Lambda,\xi)\in\TBound$.
    Fix $xy\in\E(\Lambda)$ with $y\not\in\partial\Lambda$.
    Fix $b,a\in\Z$.
    Then conditional on the event $\{h_x=b,\,xy\not\in\calE_a\}$, the law of $h$ is given by $\mu_{\Lambda'}^{\xi'}$,
    where $(\Lambda',\xi')\in\TBound$ is chosen such that:
    \begin{itemize}
        \item $\Lambda'\subset\Lambda$ is maximal subject to $x\not\in\Lambda'$ and $\tau'_{zy} = \tau_{xy} + |b-a|$,
        \item $\xi'$ is the same as $\xi$, supplemented with the values $\xi'_x=b$
        and $\xi'_z=a$ ,
        and then restricted to $\partial\Lambda'$.
    \end{itemize}
Here $z\in L_{xy}$ is the new boundary point appearing after the truncation of $xy$.
\end{lemma}

\begin{proof}
    The proof is a straightforward computation.
    Rather than giving the details, we sketch its relation to the
    Fortuin--Kasteleyn--Ising coupling.
    It is easy to determine the conditional law of $h$ given $\{h_x=b\}$;
    the difficulty is in the second step.
    Suppose that $xy\not\in\calE_a$.
    Notice that
    \[
        h_y\mapsto V_{\tau_{xy}}(|b-a|+|h_y-a|)
    \]
    is the only potential function which:
    \begin{itemize}
        \item Equals $V_{\tau_{xy}}(h_y-b)$ when $h_y$ and $b$ lie on different sides of $a$,
        \item Is symmetric around the value $h_y=a$.
    \end{itemize}
    Now the potential function associated with the edge $xy$ in the conditional measure must clearly satisfy the first property
    by the intermediate value theorem.
    Moreover, the flip symmetry property in the Fortuin--Kasteleyn--Ising coupling implies that the conditional potential function must also satisfy the second property.
    This uniquely determines the conditional potential function.
    Notice that $V_{\tau_{xy}}(|b-a|+|h_y-a|)=V_{\tau_{xy}+|b-a|}(h_y-a)$,
    which is exactly the potential function associated with the edge $zy$ in $\mu_{\Lambda'}^{\xi'}$.
\end{proof}

\begin{lemma}
    The consistency, Markov, and flip symmetry properties of Theorem~\ref{theorem:interpolation_properties} generalise to the current setting.
\end{lemma}

\begin{proof}
    Consistency and the Markov property are immediate from the definitions.
    The flip symmetry property follows from the fact that both $V$ and any truncated version $V_\tau$ of it are symmetric functions.
\end{proof}

\subsection{Key properties of the generalised interpolation}

The several manifestations of the FKG inequality play an essential role.
We argue that they generalise to the current setting.
\begin{itemize}
    \item \textbf{Monotonicity for heights (Lemma~\ref{lemma:monotonicity}) and log-concavity (Lemma~\ref{lemma:log_concavity}).}
    These results generalise to the current setting (with truncated domains) simply because every potential function $V_\tau$ is convex and symmetric.
    \item \textbf{Monotonicity for absolute heights (Lemma~\ref{lemma:monotonicity_abs}).}
    Suppose that $\Lambda$ is any truncated domain, $X$ and $Y$ are two increasing functions of $(|h|,\calE_0)$,
    and $\xi'\geq\xi\geq 0$ two boundary conditions,
    then
    \[
        \mu_\Lambda^\xi[XY]\geq \mu_\Lambda^\xi[X]\cdot\mu_\Lambda^\xi[Y]
        \qquad\text{and}\qquad
        \mu_\Lambda^{\xi'}[X]\geq \mu_\Lambda^\xi[X].
    \]
    These results were derived in~\cite{LammersOtt_2024_DelocalisationAbsolutevalueFKGSolidonsolid}.
    \item \textbf{Monotonicity in domains (Lemmas~\ref{lemma:monotonicity_abs_new} and~\ref{lemma:monotonicity_abs_conditioned}).}
    The exact same statement can be shown to hold true, but an extra step is required in the proof.
    Indeed, the original proof still implies that lowering the boundary heights or conditioning vertices to take the value zero can only decrease the distribution of $(|h|,\calE_0)$.
    The only question that remains open, is whether the distribution of $(|h|,\calE_0)$ decreases when we truncate one of the edges in the edge boundary of the domain (where the boundary height $0$ is imposed).
    It is easy to see that increasing the truncation of an edge is equivalent to applying a \emph{decreasing} Radon--Nikodym derivative to the distribution of the adjacent height.
    By the FKG inequality for $(|h|,\calE_0)$, this can only decrease the distribution of $(|h|,\calE_0)$.
\end{itemize}

\subsection{Continuity of the observable}

All arguments rely on percolation theory using the properties described above,
except for Lemma~\ref{lemma:continuity_observable_PSI}.
We generalise that lemma in this last subsection.

\begin{lemma}
    \label{lemma:continuity_observable_GENERAL}
    For any $n\in\Z_{\geq 1}$,
    the map $p_n|_\Phi$ is continuous.
\end{lemma}

\begin{proof}
    Recall the proof of Lemma~\ref{lemma:continuity_observable_PSI},
    and write
    $D_n'$ for the set of \emph{truncated} domains $\Lambda_n\subset\Lambda\subset\Lambda_{2n}$.
    This set can be written generically as a finite union $\cup_{\Delta} D_n'(\Delta)$,
    where
    \[
        D_n'(\Delta):=\{\Lambda\in D_n':\Lambda\cap\Z^2=\Delta\}.
    \]
    The general observable $p_n$ may be written as
     \begin{multline}
        p_n(V):=\sup_\Delta
        \sup_{\Lambda\in D'_n(\Delta)}
        \chi(\Lambda,V);\\
        \chi(\Lambda,V):=
        \text{the probability $\mu_{\Lambda}^0
        [\CIRCUIT(\calL_1,n)]$ for the potential $V$}.
     \end{multline}
     It suffices to prove that $\sup_{\Lambda\in D'_n(\Delta)}
        \chi(\Lambda,\blank)$ is continuous over $\Phi$ for any $\Delta$.
    Fix the values of $n$ and $\Delta$ in the remainder of the proof, and write $T:=D'_n(\Delta)$.

    With $n$ and $\Delta$ fixed, the domain $\Lambda\in T$ is determined in terms of the truncation on the boundary of $\Delta$ (hence the notation $T$).
    It is easy to see that the map $\chi(\Lambda,\blank)$ is continuous over $\Phi$ for any fixed $\Lambda$,
    an we would like to argue that continuity is preserved when we take a supremum over all possible truncations.
    Suppose that the following two properties hold true:
    \begin{itemize}
        \item For any fixed potential $V\in\Phi$, the quantity $\chi(\Lambda,V)$ converges as we send one or multiple truncations to infinity for $\Lambda\in T$,
        \item The limits so obtained (one for each subset of $\partial_e\Lambda$) are continuous in $V$.
    \end{itemize}
    Then $T$ may be viewed as a subset of some compact space $\tilde T$,
    and $\chi$ extends to a continuous function on $\tilde T\times\Phi$.
    The desired result then follows.

    Recall the definition of the topology on $\Phi$ from Definition~\ref{def:pot}; it will now start to play a role.
    We first derive the first property for a fixed potential $V\in\Phi$,
    and then the second property (when $V$ is varied).
    Let $c:=c(V):=\lim_{a\to\infty} V(a)/a$,
    and introduce the potential $W_c(a):=c\cdot |a|$.

    \emph{Convergence for a fixed potential $V$.}
    It is straightforward to see that sending the truncation of an edge to infinity is equivalent to sending the potential of that edge to $W_c$.
    This implies in particular that the limiting measure is well-defined;
    it is simply the original height functions measure but with the potential $V_{\tau}$ on that edge replaced with $W_c$.
    The same holds true if we send multiple truncations to infinity;
    in that case, the potential $W_c$ is imposed at all concerned edges.

    \emph{Continuity of the limits in $V$.}
    It is clear from the definitions that the truncated potential functions associated with each edge are continuous in the choice of $V$.
    For normal edges this follows from the fact that $V(a)-V(0)$ defines a continuous function on $\Phi$ for each $a\in\Z$.
    For edges where the truncation is sent to infinity, this follows from the fact that $c(V)$ defines a continuous function on $\Phi$.
    This yields the desired continuity.
\end{proof}

% \part*{References}
% \newpage ~ \newpage

\addtocontents{toc}{\protect\vspace{1em}}

\subsection*{Acknowledgements}
\addcontentsline{toc}{section}{Acknowledgements}

The author would like to thank Hugo Duminil-Copin
for several directions which turned out to be extremely useful,
as well as for many encouragements during this research.
The author would also like to thank Laurin Köhler-Schindler
for a very enthusiastic discussion on Russo-Seymour-Welsh theory
at the conference celebrating 100 years of Ising model,
and Paul Dario, Aernout van Enter, Alexander Glazman,
Trishen Gunaratnam,
Alex Karrila, 
Romain Panis, and Franco Severo for comments and suggestions regarding this work.
This project has received funding from the European Research Council
(ERC) under the European Union's Horizon 2020 research and innovation programme
(grant agreement No. \texttt{757296}) and the French National Research Agency
(ANR), project number \texttt{ANR-23-CPJ1-0150-01}.

% \addcontentsline{toc}{part}{References}
\printbibliography

% \newpage
% \work{Change margins}

\end{document}